\newcommand{\linelabel}[1]{}
\renewcommand{\thesubsection}{\thesection(\@roman\c@subsection)}
\definecolor{deepjunglegreen}{rgb}{0.0, 0.29, 0.29}
\definecolor{darkspringgreen}{rgb}{0.09, 0.45, 0.27}
\newenvironment{NB}{
\color{red}{\bf NB}. \footnotesize
}{}
\newenvironment{NB2}{
\color{blue}{\bf NB}. \footnotesize
}{}
\newtheorem{Theorem}[equation]{Theorem}
\newtheorem{Corollary}[equation]{Corollary}
\newtheorem{Lemma}[equation]{Lemma}
\newtheorem{Proposition}[equation]{Proposition}
\theoremstyle{definition}
\newtheorem{Conjecture}[equation]{Conjecture}
\theoremstyle{remark}
\newtheorem{Remark}[equation]{Remark}
\newtheorem*{Claim}{Claim}
\numberwithin{equation}{section}
\newcommand{\thmref}[1]{Theorem~\ref{#1}}
\newcommand{\CC}{{\mathbb C}}
\newcommand{\Z}{{\mathbb Z}}
\newcommand{\RR}{{\mathbb R}}
\newcommand{\Spec}{\operatorname{Spec}\nolimits}
\newcommand{\Hom}{\operatorname{Hom}}
\newcommand{\Ext}{\operatorname{Ext}}
\newcommand{\ve}{\varepsilon}
\renewcommand{\MR}[1]{}
\renewcommand{\AA}{{\mathbb A}}
\newcommand{\Bun}{\operatorname{Bun}}
\newcommand\ZZ{\mathbb Z}
\newcommand\grb{\mathfrak b}
\newcommand\grg{\mathfrak g}
\newcommand{\Perv}{\operatorname{Perv}}
\newcommand{\TT}{\mathbb T}
\newcommand{\bigzerou}{\lower1.7ex\hbox{\Huge 0}}
\newcommand\alp{\alpha}
\newcommand\aff{\operatorname{aff}}
\newcommand\PP{\mathbb{P}}
\newcommand\calZ{\mathcal{Z}}
\newcommand\x{\times}
\newcommand{\Heis}{\mathfrak{Heis}}
\newcommand{\Vir}{\mathfrak{Vir}}
\newcommand{\IC}{\operatorname{IC}}
\newcommand{\fa}{\mathfrak a}
\newcommand{\Stab}{\operatorname{Stab}}
\newcommand{\scF}{\mathscr F}
\newcommand{\scG}{\mathscr G}
\newcommand{\cA}{\mathcal A}
\newcommand\calF{\mathcal F}
\newcommand\calA{\mathcal A}
\newcommand\tilZ{\widetilde{Z}}
\newcommand\tcalF{\widetilde{\calF}}
\newcommand{\scW}{\mathscr W}
\newcommand\grt{\mathfrak t}
\newcommand\Del{\Delta}
\newcommand{\on}{\operatorname}
\newcommand{\unl}{\underline}
\newcommand{\iso}{{\stackrel{\sim}{\longrightarrow}}}
\newcommand{\BA}{{\mathbb{A}}}
\newcommand{\BC}{{\mathbb{C}}}
\newcommand{\BG}{{\mathbb{G}}}
\newcommand{\CF}{{\mathcal{F}}}
\newcommand{\CK}{{\mathcal{K}}}
\newcommand{\CN}{{\mathcal{N}}}
\newcommand{\CO}{{\mathcal{O}}}
\newcommand{\CU}{{\mathcal{U}}}
\newcommand{\Gr}{{\operatorname{Gr}}}
\newcommand{\svee}{{\!\scriptscriptstyle\vee}}
\newcommand{\fb}{{\mathfrak{b}}}
\newcommand{\fg}{{\mathfrak{g}}}
\newcommand{\ft}{{\mathfrak{t}}}
\newcommand\calB{\mathcal B}
\newcommand\lam{\lambda}
\newcommand\bft{\mathbf t}
\newcommand\sig{\sigma}
\newcommand\del{\delta}
\newcommand\calU{\mathcal U}
\newcommand\perf{\operatorname{perf}}
\newcommand\calP{\mathcal P}
\newcommand\calH{\mathcal H}
\newcommand\Hecke{\operatorname{Hecke}}
\newcommand\calD{\mathcal D}
\newcommand{\oZ}{\mathring{Z}}
\newcommand{\oU}{\mathring{\mathcal U}}
\renewcommand\BC{\CC}
\newcommand\BP{\PP}
\newcommand\CB{\calB}
\renewcommand\on{\operatorname}
\newcommand\BN{\NN}
\newcommand\CR{\mathcal R}
\newcommand\Lam{\Lambda}
\renewcommand\BN{\mathbb N}
\newcommand\BQ{\mathbb Q}
\newcommand\BZ{\mathbb Z}
\newcommand\calC{\mathcal C}
\newcommand\Rep{\operatorname{Rep}}
\newcommand\calG{\mathcal G}
\newcommand\calO{\mathcal O}
\newcommand\calK{\mathcal K}
\newcommand\Act{\operatorname{Act}}
\newcommand\grs{\mathfrak s}
\newcommand\calM{\mathcal M}
\newcommand\grn{\mathfrak n}
\newcommand\Ho{\operatorname{Ho}}
\newcommand\Vect{\operatorname{Vect}}
\newcommand\sI{{\mathsf{I}}}
\newcommand\sF{{\mathsf{F}}}
\begin{document}

\title[Kazhdan-Lusztig conjecture via zastava spaces]
{Kazhdan-Lusztig conjecture via zastava spaces}
%{\rm Preliminary Version (\today)}

\author[A.~Braverman]{Alexander Braverman}
\address{Department of Mathematics, University of Toronto and Perimeter Institute
of Theoretical Physics, Waterloo, Ontario, Canada, N2L 2Y5;
\newline Skolkovo Institute of Science and Technology}
\email{braval@math.toronto.edu}
\author[M.~Finkelberg]{Michael Finkelberg}
\address{National Research University Higher School of Economics, Russian Federation,
  Department of Mathematics, 6 Usacheva st, 119048 Moscow;
\newline Skolkovo Institute of Science and Technology;
\newline Institute for the Information Transmission Problems}
\email{fnklberg@gmail.com}
\author[H.~Nakajima]{Hiraku Nakajima}
\address{Kavli Institute for the Physics and Mathematics of the Universe (WPI),
  The University of Tokyo, 5-1-5 Kashiwanoha, Kashiwa, Chiba, 277-8583, Japan
  \newline Research Institute for Mathematical Sciences,
Kyoto University, Kyoto 606-8502, Japan}
\email{hiraku.nakajima@ipmu.jp}
\maketitle

\begin{abstract}
  We deduce the Kazhdan-Lusztig conjecture on the multiplicities of simple modules over a
  simple complex Lie algebra in Verma modules in category $\mathcal O$ from the equivariant
  geometric Satake correspondence and the analysis of torus fixed points in zastava spaces.
  We make similar speculations for the affine Lie algebras and $\scW$-algebras.
\end{abstract}

\setcounter{tocdepth}{1}
\tableofcontents

% \begin{NB}
%   I changed macros \verb+\oZ+ and \verb+\oU+.
% \end{NB}%

\section{Introduction}
\label{intro}
\subsection{The setup: simple version}\label{setup}
Let $Z$ be a scheme of finite type over $\CC$ and let $\calF$ be a pure complex of constructible
sheaves on $Z$ (i.e.\ the constructible part of data of a pure complex of Hodge $D$-modules).
In particular, $\calF$ is semi-simple.

Assume that some  torus $\TT$ acts on $Z$ and that $\calF$ is $\TT$-equivariant. Moreover, we shall assume that $Z$ is conical in the following sense: there exists a cocharacter $\CC^{\times}\to \TT$ and a point $z_0\in Z$ such that the corresponding action of $\CC^{\times}$ on $Z$ extends to a morphism $\AA^1\times Z\to Z$ sending $\{ 0\}\times Z$ to $z_0$.

Let us set $\calU=\Ext^*_{\TT}(\calF,\calF)$, where the right hand side stands for $\Ext$ in the $\TT$-equivariant derived category.
% \begin{NB}
% I add an explanation.
% \end{NB}%
This is an associative algebra over $\CC$. In~Section~\ref{abs-form} we suggest a formalism
that allows to study representation theory of $\calU$ in a geometric way. In the rest of the
paper we explain how to apply this formalism in order to reprove the Kazhdan-Lusztig conjecture
for category $\calO$ of representations of semi-simple Lie algebras (and their affine
generalizations).

Let us first give a brief outline of the contents of~Section~\ref{abs-form} (and later we shall give a sketch explaining how to apply this to study representations of simple Lie algebras).

\subsection{Specialization of the central character and hyperbolic restriction}\label{hyp-int}
We have the obvious homomorphism $H^*_{\TT}(\on{pt},\CC)\to \calU$ whose image lies
in the center. The assumption that $\calF$ is pure together with the assumption that $Z$ is
conical implies that this map is injective. For simplicity of the discussion let us assume that $\calU$ is free over $H^*_{\TT}(\on{pt},\CC)$ (we are going to relax this assumption a little later).  For any $x\in \grt=\Spec(H^*_{\TT}(\on{pt},\CC))$ let us
denote by $\calU_x$ the corresponding specialization of $\calU$. This is a finite-dimensional associative algebra. Any irreducible representation of $\calU$ factors through
$\calU_x$ for some $x$. Thus in order to proceed we need to realize $\calU_x$ geometrically.

Let $\TT_x$ be the subtorus of $\TT$ generated by $x$. Then we can consider the functor of hyperbolic restriction
$\Phi_x\colon  D_\TT^b(Z)\to D_\TT^b(Z^{\TT_x})$ (here $D_\TT^b(Z)$ stands for the derived category
of $\TT$-equivariant constructible sheaves on $Z$) --- we recall the definition of $\Phi_x$
in~\S\ref{hyperbolic}. Let $\calF_x=\Phi_x(\calF)$.  We explain in~Section~\ref{abs-form} that under the above assumptions
we get a natural morphism $\calU_x\to \Ext(\calF_x,\calF_x)$ (note that in the RHS we deal with usual (i.e.\ non-equivariant) Ext's).
We do not have any general way to guarantee that this map is an isomorphism but often it is (one such instance is recalled in~\S\ref{spec-easy}). However, if we assume that this map is an isomorphism, we can do the following:

1) Classify geometrically irreducible representations of $\calU_x$;

2) Construct geometrically certain modules over $\calU_x$ that we call standard and costandard modules.

3) Compute geometrically the multiplicities of simple modules in standard (or costandard) ones.

\noindent
Namely, the complex $\calF_x$ is semi-simple. Let us assume that there exists a stratification $Z^{\TT_x}=\bigsqcup\limits_{w\in W_x} Z^{\TT_x}_w$ such that
$\calF_x$ is constructible with respect to the above stratification (here $W_x$ is some finite index set). Thus $\calF_x$ is isomorphic to a direct sum of the
form
\[
\bigoplus\limits_{w\in W_x, \sigma\in\operatorname{Irr}(\pi_1(Z^{\TT_x}_w))} \calF_{x,w,\sigma}\otimes L_{x,w,\sigma}
\]
where $\calF_{x,w,\sigma}$ is the simple perverse sheaf on $Z^{\TT_x}$ which is equal to the
Goresky-MacPherson extension
of the irreducible local system on $Z^{\TT_x}_w$ given by $\sigma$, and $L_{x,w,\sigma}$ is a (graded) vector space.
It is easy to see that every $L_{x,w,\sigma}$ acquires a natural action of $\calU_x$ such that

a) This action is irreducible.

b) The assignment $(w,\sigma)\mapsto L_{x,w,\sigma}$ is a bijection between pairs $(w,\sigma)$ such that $L_{x,w,\sigma}\neq 0$, and
isomorphism classes of simple modules over $\calU_x$.

\noindent
Moreover, for every $w\in W_x$ let $i_w$ be the embedding of some point of $Z^{\TT_x}_w$ into $Z^{\TT_x}$.  Define the standard module
$\Del_{x,w,\sig}$ as the $\sigma$-part of $i_w^!\calF_x$ (similarly, we define the costandard module $\nabla_{x,w,\sigma}$ as the $\sigma$-part of $i_w^*\calF_x$).

Assume for example, that all $L_{x,w,\sigma}$ and $\Del_{x,w,\sigma}$ are equal to $0$ unless $\sigma$ is trivial (in this case we shall omit $\sigma$ from the notation).
Then it follows from the above that the multiplicity of $L_{x,w}$ in $\Del_{x,w'}$ is equal to the total dimension of $i_{w'}^!\calF_{x,w}$.

\begin{NB}
  The following paragraph is added by HN on July 15.
\end{NB}%

This framework is similar to one used for Kazhdan-Lusztig type
character formula in other contexts, e.g.,
\cite{CG,GinzburgVasserot,Lu-cus2,Vasserot,Na-qaff,handsaw}.
In these cases, $\calF=\pi_*\CC_{\tilZ}$, where $\pi\colon \tilZ\to Z$
is a proper $T$-equivariant morphism with $\tilZ$ smooth, or $\calF$
with some vanishing cohomology assumptions (\cite{Lu-cus2}).
Our new point is to use the hyperbolic restriction functor $\Phi_x$ in
order to drop the existence of $\pi\colon\tilZ\to Z$ or the vanishing
assumptions. See Section~\ref{abs-form} for more detail.

\subsection{Zastava spaces}
We would like to apply the above analysis to the case when $\calU$ is some close relative of the universal enveloping algebra of a simple Lie algebra over $\CC$. Of course, the latter is of infinite rank over its center, so the above formalism cannot be applied literally. However, we can extend the formalism in the following way: we can replace $Z$ by an ind-scheme $Z=\underset{\to}\lim\,  Z_i$ where each $Z_i$ is
finite-dimensional and the limit is taken with respect to closed $\TT$-equivariant embeddings. Also we assume that the sheaf $\calF$ is of the form $\oplus \calF_i$ where each $\calF_i$ is supported on $Z_i$. In this case for a finitely generated algebra $\calU$ we can only expect an injective map $\calU\to \Ext(\calF,\calF)$ (which can become an isomorphism after certain completion).
The definitions of $L_{x,w}$ and $\Del_{x,w}$ still make sense, and if we assume that every
$L_{x,w}$ is simple (together with some additional assumptions, cf.~\S\ref{ind-int} for the
details), in this case we get a similar formula for the multiplicity of
$L_{x,w}$ in every $\Del_{x,w'}$.

Let us discuss which $Z$ we want to work with. Let $G$ be an almost simple simply connected
algebraic group over $\CC$ with Lie algebra $\grg$.
Let $\Lam$ be its coroot lattice; we denote by $\Lam_+$ the sub-semigroup consisting of sums
of positive coroots. For every $\alp\in \Lam_+$ we can consider the scheme $Z^{\alp}$ of
{\em based quasi-maps} from $\PP^1$ to the flag variety $\calB$ of $G$ of degree $\alp$
(we recall the definition in~Section~\ref{fixed points}). This is an affine scheme over $\CC$ acted on by the torus $\TT=T\times \CC^{\x}$, where $T$ is a maximal torus of $G$ (here $T$ acts on $\calB$ and $\CC^{\x}$ acts on $\PP^1$).  For every $\alp\geq \beta$ we have a natural closed $\TT$-equivariant embedding $Z^{\beta}\hookrightarrow Z^{\alp}$ (given by ``adding defect at $0$") and we denote by $Z$ the corresponding ind-scheme; we also let $\calF=\oplus_{\alp\in \Lam_+} \calF^{\alp}$ where $\calF^{\alp}$ is the IC-sheaf of $Z^{\alp}$.

\subsection{The universal enveloping algebra and its relatives}
\label{relatives}
Let us now discuss which algebra $\calU$ we want to work with. Let $\grg^\svee$ denote the
Langlands dual Lie algebra.
Let $U(\grg^\svee)$ be its universal enveloping algebra and let $U_{\hbar}(\grg^\svee)$ be its ``renormalized" version.
By definition $U_\hbar(\grg^\svee)$ is a graded $\CC[\hbar]$-algebra generated by the linear subspace $\grg^\svee$ with relations
$XY-YX=\hbar[X,Y]$ for $X,Y\in \grg^\svee$; the grading is defined in such a way that $\deg(X)=\deg(\hbar)=2$ for $X\in \grg^\svee$.

Let $\calZ_{\hbar}(\grg^\svee)$ denote the center of $U_{\hbar}(\grg^\svee)$. It follows from the
Harish-Chandra homomorphism that the center $\calZ_{\hbar}(\grg^\svee)$ is naturally isomorphic to
$S_{\hbar}(\bft^*)^W\simeq S(\grt^*)^W$
(here as before $\grt={\mathbf t}\oplus\CC$ denotes the Lie algebra of $\TT$).\footnote{In
  what follows we shall always treat $S(\grt^*)$ as a graded algebra with generators having
  degree 2; in this way we can identity $S_{\hbar}(\bft^*)$ with $S(\grt^*)$ as graded algebras.}
We now set
\[
\calU=U_{\hbar}(\grg^\svee)\underset{\calZ_{\hbar}(\grg^\svee)}\otimes S(\grt^*).
\]
Note that the center of $\calU$ is $S(\grt^*)$ and for any $x\in \grt$ of the form $(\mu,n)$
with $\mu\in \bft$ and $n\in \CC,\ n\neq 0$, the specialization $\calU_x$ of $\calU$ at $x$ is
naturally isomorphic to the specialization of $U(\grg^\svee)$ at $\lam:=n^{-1}\mu$. Hence any
$\calU_x$-module can be regarded as a $\grg^\svee$-module with central character $\lam$.

\subsection{What is actually done in this paper}
The main results of this paper can be summarized as follows:

\begin{enumerate}
\item In Section \ref{abs-form} we develop carefully the abstract formalism discussed
  in~\S\ref{setup} and~\S\ref{hyp-int}.

\item In~Section~\ref{action} we construct an (injective) homomorphism
  $\calU\to \Ext(\calF,\calF)$, satisfying some natural properties (cf.~Theorem~\ref{main1}).
  The construction is an easy corollary of the derived Satake equivalence (that we also recall
  in~Section~\ref{action}).

\item In Section~\ref{fixed} we compute the fixed point schemes $Z^{\TT_x}$ with the corresponding
  stratifications. Here $x$ is an element of $\Spec(H^*_{\TT}(pt))$, i.e.\ it is a pair
  $(\mu,n)$ where $\mu\in \bft$ and $n\in \CC$. For simplicity, we restrict ourselves to the
  case when $n\in\ZZ_{>0}$, $\mu$ is integral and the ratio $\lam=\mu/n$ is dominant (i.e.\ it
  is a dominant rational weight).
In this case the scheme $Z^{\TT_x}$ is always finite-dimensional, and in fact it can be identified with the open Bruhat cell in some partial flag variety of a certain (endoscopic) reductive subgroup of $G$ (the choice of this flag variety is dictated by integrality properties of $\lambda$); moreover, the corresponding stratification comes from certain (opposite) parabolic Schubert stratification of the above flag variety (the type of the parabolic is determined by regularity properties of $\lambda$).

\item In Section \ref{standard} we compute the corresponding standard and co-standard modules $\Del_{x,w}$ and $\nabla_{x,w}$ and identify them with Verma and dual Verma modules for the Lie algebra $\grg^{\vee}$ with appropriate highest weights. As a byproduct it follows that the modules $L_{x,w}$ are the corresponding simple modules in category $\calO$ for $\grg^{\vee}$. Together with the fixed points analysis discussed in~(3) above this implies the Kazhdan-Lusztig conjecture (formally, we do it only for rational central characters, but the generalization to arbitrary central character is straightforward --- we don't perform it in this paper in order not to overload the reader with cumbersome notation). In fact, we prove the Kazhdan-Lusztig conjecture in the formulation that
  is ``Koszul dual" to the usual one (cf.~\S\ref{kl}), but the two formulations are known to be equivalent. Note also that the formulation of the Kazhdan-Lusztig conjecture usually has to be adjusted to the integrality and regularity properties of the central character --- here we get it algorithmically from the fixed points analysis discussed above.

\item In Sections \ref{affine case} and \ref{twi} we discuss a generalization of the above to
  the case of affine Lie algebras (and also the corresponding $\scW$-algebras). In this
  case, one can define the affine zastava spaces and perform similar fixed points analysis. Unfortunately, the derived Satake equivalence is not available in the affine case, so our construction of the homomorphism $\calU\to \Ext(\calF,\calF)$ does not work in the affine case. However, we believe that this homomorphism can also be defined directly (by explicitly describing the images of the Chevalley generators)
  and such a definition should go through in the affine case; we postpone this for another publication. Assuming that the above homomorphism can be constructed, we can again reprove the Kazhdan-Lusztig conjecture in the affine case. One nice feature of this approach is that it treats the cases of representations of affine Lie algebras of positive, negative and critical level uniformly (one just has to compute the fixed point schemes $Z^{\TT_x}$ as above with the natural stratification). Similarly, in this way we can potentially
  \begin{NB}
    added by HN on July 15.
  \end{NB}%
reprove the version of the Kazhdan-Lusztig conjecture for
  $\scW$-algebras (proved earlier by Arakawa).

\end{enumerate}

\subsection{Acknowledgments}
Part of this work was done while the authors visited the Simons Center for Geometry and Physics
in October 2013. We are grateful to SCGP for the wonderful working conditions.
\begin{NB}
  Added by HN on July 15.
\end{NB}%
Results were reported at a few occasions around 2013-14. We apologize
for the long delay in writing up the paper.
A.B.\ was partially supported by NSERC.
M.F.\ was partially funded within the framework of the HSE University
Basic Research Program and the Russian Academic Excellence Project ‘5-100’.
H.N.\ was partially supported by the World Premier International Research Center Initiative
(WPI Initiative), MEXT, Japan, and JSPS Kakenhi Grant Numbers
23224002, % Fukaya's S
23340005, % my B
24224001, % Saito's S
25220701, % Mukai's S
16H06335, % moriwaki's S
19K21828. % my houga
%------------------------------------------------------------------------------------------------------------------------------
\section{Abstract formalism}\label{abs-form}
In this Section we keep the notations of~\S\ref{setup}. We first would like to discuss carefully how to realize the algebra $\calU_x$ geometrically (for some $x\in \grt$).

\subsection{Specialization of the central character: the easy case}\label{spec-easy}
Let us first discuss a familiar example.
Assume that we are given a proper $\TT$-equivariant morphism $\pi\colon \tilZ\to Z$ with $\tilZ$ smooth. Moreover, let us assume that $\calF=\pi_*\CC_{\tilZ}$.
In this case $\calU_x$ admits the following description.

Let $\TT_x$ be the subtorus in $\TT$ defined as the Zariski closure of $\exp(tx)$ for all $t\in \CC$.
Then we can consider the fixed point sets $\tilZ^{\TT_x}$ and $Z^{\TT_x}$. There is a natural
proper morphism $\pi_x\colon \tilZ^{\TT_x}\to Z^{\TT_x}$.
Let $\tcalF_x=(\pi_x)_*\CC$. The following result is contained in~\cite[Chapter 8]{CG}:

\begin{Lemma}\label{kl-ginz} Assume that $\tilZ\underset{Z}\times \tilZ$ has an algebraic cell decomposition. Then $\calU$ is free over $H_{\TT}^*(\on{pt},\CC)$ and $\calU_x=\Ext^*(\tcalF_x,\tcalF_x)$.
\end{Lemma}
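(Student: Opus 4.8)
The plan is to place $\calU$ in the Chriss--Ginzburg convolution picture and then pass to $\TT_x$-fixed points, following \cite{CG}. Put $S:=H^*_\TT(\on{pt},\CC)$, $M:=\tilZ\times_Z\tilZ$, and write $H^\TT_\bullet(-)$ for equivariant Borel--Moore homology. Since $\pi$ is proper, $\pi_*=\pi_!$, so by adjunction
\[
\calU=\Ext^*_\TT(\pi_*\CC_{\tilZ},\pi_*\CC_{\tilZ})\cong\Ext^*_\TT(\CC_{\tilZ},\pi^!\pi_!\CC_{\tilZ}).
\]
Proper base change along the Cartesian square with vertex $M$ (its two projections are proper, being pullbacks of $\pi$), together with smoothness of $\tilZ$ (so that the $!$-pullback of $\CC_{\tilZ}$ along either projection is the dualizing complex of $M$ up to a shift), identifies the right-hand side with $H^\TT_\bullet(M)$ up to a shift. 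As recalled in \cite{CG} this is an isomorphism of graded algebras, the multiplication on $H^\TT_\bullet(M)$ being convolution.

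For the first assertion I would invoke the algebraic cell decomposition of $M$, which in our applications is by $\TT$-stable affine cells $C_i\cong\AA^{n_i}$ (arising from a Bia\l{}ynicki--Birula decomposition, with linear $\TT$-action on each cell). Filter $M$ by closed unions of cells and run the long exact sequences in $H^\TT_\bullet$; each subquotient contributes $H^\TT_\bullet(\AA^{n_i})$, free of rank one over $S$ and concentrated in even degrees, so all connecting homomorphisms vanish for parity reasons and the sequences split. Hence $\calU\cong H^\TT_\bullet(M)$ is a free $S$-module with basis the fundamental classes $[\overline{C_i}]$ of the cell closures.

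For the specialization, first run the non-equivariant version of the first paragraph for the proper map $\pi_x\colon\tilZ^{\TT_x}\to Z^{\TT_x}$ (whose source is again smooth) and use $M^{\TT_x}=\tilZ^{\TT_x}\times_{Z^{\TT_x}}\tilZ^{\TT_x}$: this gives $\Ext^*(\tcalF_x,\tcalF_x)\cong H_\bullet(M^{\TT_x})$ as algebras. The fixed locus $M^{\TT_x}$ carries the induced cell decomposition with cells $C_i^{\TT_x}$, each a nonempty linear subspace of $C_i$ (a linear torus action fixes the origin) and with closure relations inherited from the $C_i$; running the argument of the second paragraph again shows $H_\bullet(M^{\TT_x})$ has dimension $|I|=\operatorname{rk}_S\calU=\dim_\CC\calU_x$. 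It then remains to see that the map $\calU_x\to\Ext^*(\tcalF_x,\tcalF_x)$ produced by the formalism of Section~\ref{abs-form} is an isomorphism. Here $\tcalF_x$ may be replaced by $\calF_x=\Phi_x(\calF)$, because Braden's theorem and smoothness of $\tilZ$ give $\calF_x=\Phi_x(\pi_*\CC_{\tilZ})\cong(\pi_x)_*\Phi_x(\CC_{\tilZ})\cong\tcalF_x$ up to a shift, whence $\Ext^*(\calF_x,\calF_x)=\Ext^*(\tcalF_x,\tcalF_x)$; and under the two convolution-algebra identifications the map in question becomes the specialization at $x$ of the comparison between $H^\TT_\bullet(M)$ and $H^\TT_\bullet(M^{\TT_x})$. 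By the localization theorem the latter is an isomorphism once the $\TT_x$-moving weights are inverted; since those weights --- the $\TT$-weights that do not vanish on $\operatorname{Lie}\TT_x$, and $\TT_x$ is generated by $x$ --- are invertible modulo the maximal ideal $\mathfrak m_x$, the isomorphism persists after $-\otimes_S\CC_x$. Together with the dimension count this yields $\calU_x\cong\Ext^*(\tcalF_x,\tcalF_x)$, and it is an algebra map because convolution commutes with fixed-point restriction \cite{CG}.

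The step I expect to be delicate is exactly this last one: knowing that the comparison is an isomorphism \emph{at the point $x$ itself}, not merely after inverting all the moving weights. This is where the hypothesis on $M$ is really used --- the cells being $\TT$-stable affine spaces forces each $C_i^{\TT_x}$ to be cut out linearly in $C_i$, so the equivariant Euler classes governing the specialization are products of $\TT_x$-moving weights and hence do not vanish at $x$. The remaining ingredients --- the base-change and adjunction manipulations of the first paragraph, the parity argument of the second, and the compatibility of convolution with fixed-point restriction --- are the standard bivariant bookkeeping of \cite{CG}.
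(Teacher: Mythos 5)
The paper gives no proof of this lemma: it is stated with only the citation to \cite[Chapter 8]{CG}, so the only thing to compare against is that reference, and your argument is a correct reconstruction of it --- the convolution identification $\calU\cong H^{\TT}_\bullet(M)$ for $M=\tilZ\times_Z\tilZ$, freeness from the cell decomposition by a parity argument, and the specialization statement from the localization theorem combined with the equality of ranks coming from the induced cell decomposition of the fixed locus. Two remarks. (1) You silently upgrade ``algebraic cell decomposition'' to a $\TT$-stable decomposition with linearized cells. For freeness this is not needed (the spectral sequence $H^*_{\TT}(\on{pt})\otimes H_\bullet(M)\Rightarrow H^{\TT}_\bullet(M)$ degenerates by parity for any cell decomposition), but your route to the count $\dim_{\CC}H_\bullet(M^{\TT_x})=\on{rk}_{H^*_{\TT}(\on{pt})}\calU$ through the cells $C_i^{\TT_x}$ genuinely requires equivariance of the cells, and there is no obvious substitute; this is surely what the lemma intends (it is the cellular fibration hypothesis of \cite{CG}), but it should be said explicitly. (2) The passage through $\calF_x=\Phi_x(\calF)$ is dispensable and slightly risky: the lemma compares $\calU_x$ directly with $\Ext^*(\tcalF_x,\tcalF_x)$, and the specialization map can be defined entirely inside the convolution picture as the inverse of $i_*\colon H^{\TT}_\bullet(M^{\TT_x})\to H^{\TT}_\bullet(M)$ after specializing at $x$ (legitimate because both sides are free of the same finite rank and the cokernel of $i_*$ is supported away from $x$). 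The commutation $\Phi_x\pi_*\cong(\pi_x)_*\Phi_x$ you invoke is in fact true for proper $\pi$, but it is a base-change argument resting on $\cA_{\tilZ}=\pi^{-1}(\cA_Z)$ rather than a consequence of Braden's isomorphism, it only returns $\tcalF_x$ up to shifts on the components of $\tilZ^{\TT_x}$, and the paper itself pointedly declines to identify $\calF_x$ with $\tcalF_x$ in \S\ref{general case} --- so it is cleaner not to route the proof through that identification.
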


Let us continue to work under the assumptions of Lemma \ref{kl-ginz}.

The complex $\tcalF_x$ is semi-simple. Let us assume that there exists a stratification $Z^{\TT_x}=\bigsqcup\limits_{w\in W_x} Z^{\TT_x}_w$ such that
$\tcalF_x$ is constructible with respect to the above stratification (here $W_x$ is some finite index set). Thus $\tcalF_x$ is isomorphic to the direct sum of the
form
\[
\bigoplus\limits_{w\in W_x, \sigma\in\operatorname{Irr}(\pi_1(Z^{\TT_x}_w))} \calF_{x,w,\sigma}\otimes L_{x,w,\sigma}
\]
where $\calF_{x,w,\sigma}$ is the simple perverse sheaf on $Z^{\TT_x}$ which is equal to the
Goresky-MacPherson extension
of the irreducible local system on $Z^{\TT_x}_w$ given by $\sigma$, and $L_{x,w,\sigma}$ is a graded vector space.
%In practice it is usually not difficult to determine which simple perverse sheaves appear in the above decomposition.

It now follows easily that each $L_{x,w,\sigma}$ (with grading disregarded) is an irreducible
representation of $\Ext^*(\tcalF_x,\tcalF_x)$ and each irreducible representation of
$\Ext^*(\tcalF_x,\tcalF_x)$ appears in this way exactly once. See~\cite[Chapter 8]{CG}.
\begin{NB}
  Added by HN on July 15.
\end{NB}%
So from~Lemma~\ref{kl-ginz} we get a complete classification of irreducible $\calU_x$-modules.
%---------------------------------------------------------------------------------------------------------------------------------------------------------------------
\subsection{Digression on hyperbolic restriction}\label{hyperbolic}
We would like to extend the above construction to more general $\calF$.
For this we first need to recall some well-known facts about the notion of hyperbolic restriction.

Let $Z$ and $\TT$ be as above.
Let $A\subset\TT$ be a subtorus and $Z^A$ denote the
fixed point set.

Let $X_*(A)$ be the lattice of cocharacters of $A$. It is a free
$\Z$-module. Let
\begin{equation}
  \fa_\RR = X_*(A)\otimes_\Z\RR.
\end{equation}

Let $\Stab_z\subset A$ be the stabilizer subgroup of a point $z\in Z$. A {\it
  chamber\/} $\mathfrak C$ is a connected component of
\begin{equation}
    \label{eq:29}
    \fa_\RR \setminus \bigcup_{z\in Z\setminus Z^A} X_*(\Stab_z)\otimes_\Z\RR.
\end{equation}

We fix a chamber $\mathfrak C$. Choose a cocharacter $\lambda$ in
$\mathfrak C$. Let $z\in Z^A$. We introduce {\it attracting\/} and
{\it repelling\/} sets:
\begin{multline}
    \label{eq:3}
    \mathcal A_z = \left\{y \in Z \,\middle|\,
      \begin{minipage}{.6\linewidth}
      the map $t\mapsto \lambda(t)(y)$
      extends to a map $\AA^1 \to Z$ sending $0$ to $z$
      \end{minipage}
    \right\},
    \\
    \mathcal R_z = \left\{y \in Z\,\middle|\,
      \begin{minipage}{.6\linewidth}
      the map $t\mapsto \lambda(t^{-1})(y)$ extends
      to a map $\AA^1 \to Z$ sending $0$ to $z$
      \end{minipage}\right\}.
\end{multline}
These are locally closed subvarieties of $Z$, and independent of the choice
of $\lambda\in\mathfrak C$.
%\begin{NB}
%    To see the independence, we may assume $Z$ is affine. Then the
%    assertion is clear.
%\end{NB}%
 Similarly we can define $\mathcal
A_Z$\index{AX@$\mathcal A_X$ (attracting set)}, $\mathcal
R_Z$\index{RX@$\mathcal R_X$ (repelling set)} if we do not fix the
point $z$ as above. These are closed subvarieties of $Z$ if we assume that $Z$ is affine; from now on we shall assume that this is the case. Note that $Z^{A}$ is a closed subvariety of both
$\mathcal A_Z$ and $\mathcal R_Z$; in addition we have the natural
morphisms $\mathcal A_Z \to Z^A$ and $\mathcal R_Z \to Z^A$.

Let us now choose a
chamber in $\fa_\RR$, and consider the diagram
\begin{equation}\label{eq:113}
  Z^A\overset{p}{\underset{i}{\leftrightarrows}}
  \cA_Z \xrightarrow{j} Z,
\end{equation}
where $i$, $j$ are embeddings, and $p$ is defined by $p(y) =
\lim_{t\to 0}\lambda(t)y$.

%\begin{NB}
%  Oct. 10: I change the definition of the hyperbolic restriction from
%  $i^!j^*$ to $i^* j^!$. Or equivalently exchange $\cA_X$ and
%  $\mathcal R_X$.
%\end{NB}%
We consider Braden's {\it hyperbolic restriction functor\/}
\cite{Braden} (see also \cite{DrGa}) defined by $\Phi = i^*j^!$.\index{UZphi@$\Phi$ (hyperbolic restriction functor)}
\begin{Theorem}\cite{Braden,DrGa}
We have a canonical isomorphism
\begin{equation}\label{eq:Braden}
  i^* j^!
  \cong  i_-^! j_-^*
\end{equation}
on weakly $A$-equivariant objects, where $i_-$, $j_-$ are defined
as in \eqref{eq:113} for $\mathcal R_Z$ instead of $\mathcal A_Z$.
\end{Theorem}
Braden proved this theorem for a normal algebraic variety. In general this result is proved
in \cite[Theorem~3.1.6]{DrGa}.

Note also that $i^*$ and $p_*$ are isomorphic on weakly equivariant
objects, we have $\Phi = p_*  j^!$. (See \cite[(1)]{Braden}.)

\subsection{Localization theorem} \label{localization}
(See \cite[Ch.~3]{2014arXiv1406.2381B} and references therein.)
Let $\scF$, $\scG\in D^b_{\TT}(Z)$ and let $\varphi\colon Y\hookrightarrow Z$ be the embedding of a closed $\TT$-invariant subset of $Z$ which contains $Z^{\TT}$. Then
\begin{gather}\label{eq:31}
  \Ext_{{\TT}}(\scF,\scG)\to \Ext_{{\TT}}(\varphi^! \scF, \varphi^! \scG),
  \\
  \Ext_{{\TT}}(\scF,\scG)\to \Ext_{{\TT}}(\varphi^* \scF, \varphi^* \scG)\label{eq:38}
\end{gather}
are isomorphisms after inverting an appropriate element $f$ of $S(\grt^*)$. Moreover, the zeros
of $f$ are located on the corresponding walls. More precisely, if $f(x)=0$ then
$Z^{\TT}\neq Z^{\TT_x}$. In particular,
if $Z^{\TT}=Z^{\TT_x}$ then the above maps are isomorphisms in a neighbourhood of $x$.
In particular, this is true for any $x$ such that $\TT=\TT_x$.

Let $\scF\in D^b_\TT(Z)$. The homomorphism
\begin{equation}\label{eq:48}
  H^*_\TT(Z^A, i^* j^! \scF) \cong H^*_\TT(Z^A,p_* j^!\scF)
  = H^*_\TT(\cA_Z, j^! \scF) \to
 H^*_\TT(Z, \scF)
\end{equation}
becomes an isomorphism after inverting a certain element by the above
localization theorem, applied to the pair
$\cA_Z\subset Z$.

We also have two naive restrictions
\begin{equation}\label{eq:naive}
  H^*_\TT(Z^A, (j\circ i)^! \scF), \quad
  H^*_\TT(Z^A, (j\circ i)^* \scF).
\end{equation}
For the first one, we have a homomorphism to the hyperbolic restriction
\begin{equation}\label{eq:26}
  H^*_\TT(Z^A, (j\circ i)^! \scF)\to H^*_\TT(Z^A, i^* j^! \scF),
\end{equation}
which factors through $H^*_\TT(\cA_Z, j^! \scF)$. Then it also becomes
an isomorphism after inverting an element.

The second one in \eqref{eq:naive} fits into a commutative diagram
\begin{equation}
  \begin{CD}
    H^*_\TT(Z^A, i^* j^! \scF)
    @>>>
    H^*_\TT(Z^A, (j\circ i)^* \scF)
\\
@AAA @AAA
\\
    H^*_\TT(\cA_Z, j^! \scF) @>>>
    H^*_\TT(\cA_Z, j^* \scF).
  \end{CD}
\end{equation}
Two vertical arrows are isomorphisms after inverting an element
$f$. The lower horizontal homomorphism factors through $H^*_\TT(Z,\scF)$
and the resulting two homomorphisms are isomorphisms after
inverting an element, which we may assume equal to $f$. Therefore the
upper arrow is also an isomorphism after inverting $f$.

%-------------------------------------------------------------------------------------------------------------------------------------------------------------------------------
\subsection{Specialization of the central character: the general case}
\label{general case}
Let us apply the above construction to the case when $A=\TT_x$.
We denote by $\grt_x$ the Lie algebra of $\TT_x$. It is clear that $x\in \fa$ does not lie on
any of the walls. Thus we have a canonical choice of chamber: the one which contains $x$.  We let
$\Phi_x\colon  D_\TT^b(Z)\to D_\TT^b(Z^{\TT_x})$ denote the corresponding hyperbolic restriction functor.
% \begin{NB}
%   $D_{\TT}^b(Z)\to D_{\TT}^b(Z^{\TT_x})$ ?
%   \begin{NB2}
%     Done.
%     \end{NB2}%
% \end{NB}%

We now set $\calF_x=\Phi_x(\calF)$.
Note that even in the case when $\calF=\pi_*\CC_{\tilZ}$ for some $\tilZ$ we do not claim that
$\calF_x$ is the same as $\tcalF_x$ (although there are many interesting cases when it is
indeed the case, cf.~\cite{tensor2}).

\begin{Lemma}
  There is a natural homomorphism $\calU_x\to \Ext(\calF_x,\calF_x)$.
\end{Lemma}

\begin{proof}
We view $\calF_x$ as an object of the equivariant derived category and we have a natural homomorphism
$\calU\underset{S(\grt^*)}\otimes S(\grt^*_x)\to \Ext_{\TT_x}(\calF_x,\calF_x)$. On the other hand,
we have (a non-canonical) isomorphism
$\Ext_{\TT_x}(\calF_x,\calF_x)\simeq\Ext(\calF_x,\calF_x)\otimes S(\grt_x^*)$ since
$\calF_x$ is pure (due to our standing assumption of purity of $\calF$ and the fact that
the hyperbolic restriction preserves purity).
Indeed, by definition, $\Ext_{\TT_x}(\calF_x,\calF_x)=\Ext_{BZ^{\TT_x}}(B\calF_x,B\calF_x)$, where
  $BZ^{\TT_x}$ is the classifying space $Z^{\TT_x}\stackrel{\TT_x}{\times}E\TT_x=Z^{\TT_x}\times B\TT_x$,
  and $B\calF_x\simeq\calF_x\boxtimes\CC_{B\TT_x}$ due to purity of $\calF_x$ (this follows from the fact, that the purity of $\calF_x$ implies that it is a semi-simple complex, i.e.\ as an object of the derived category of $\TT_x$-equivariant sheaves on $Z^{\TT_x}$ it is isomorphic to the direct sum of the form $\oplus_i \calG_i[i]$ where every $\calG_i$ is a $\TT_x$-equivariant perverse sheaf $Z^{\TT_x}$; on the other hand, it is known that for perverse sheaves equivariance with respect to a connected group is a property rather than a structure -- i.e.\ it is (canonically) unique if it exists. In particular, since $\TT_x$ acts trivially on $Z^{\TT_x}$, it follows that for every $\TT_x$-equivariant perverse sheaf $\calG$ on $\ZZ^{\TT_x}$ we have a canonical isomorphism $B\calG=\calG\boxtimes \CC_{B\TT_x}$. Note that the isomorphism $B\calF_x\simeq\calF_x\boxtimes\CC_{B\TT_x}$ might still be not canonical, since the decomposition of $\calF_x$ into a direct sum of shifted perverse sheaves is not canonical).
  Thus, $\Ext_{BZ^{\TT_x}}(B\calF_x,B\calF_x)\simeq\Ext_{Z^{\TT_x}}(\calF_x,\calF_x)\otimes H^*(B\TT_x,\BC)
    =\Ext_{Z^{\TT_x}}(\calF_x,\calF_x)\otimes S(\grt_x^*)$.
  %see e.g.~\cite[the end of proof of~Theorem~14.1(7)]{GKM}.

Hence we get a homomorphism
$\calU\underset{S(\grt^*)}\otimes S(\grt^*_x)\to \Ext(\calF_x,\calF_x)\otimes S(\grt_x^*)$. Since $x\in \grt_x$ we can specialize the above homomorphism and we get a homomorphism  $\calU_x\to \Ext(\calF_x,\calF_x)$.
\end{proof}

We shall say that $x$ is {\em very good} if the map $\calU_x\to \Ext(\calF_x,\calF_x)$
is an isomorphism.

Unfortunately, we do not have any general criterion to decide when an element $x$ is very good. Let us assume, however,
that it is the case. Then the analysis of~\S\ref{spec-easy} goes through (with $\tcalF_x$ replaced by $\calF_x$). In particular (changing slightly the notation), we can now write
\begin{equation}\label{decomp}
\calF_x=\bigoplus\limits_{w\in W_x, \sigma\in\operatorname{Irr}(\pi_1(\ZZ^{\TT_x}_w))} \calF_{x,w,\sigma}\otimes L_{x,w,\sigma}.
\end{equation}
Then
the irreducible representations of $\calU_x$
are in one-to-one correspondence with pairs $(w,\sigma)$ for which $L_{x,w,\sigma}\neq 0$  and the corresponding irreducible representations are realized in $L_{x,w,\sigma}$.

\linelabel{good}
We say that $x$ is \emph{good} (but not necessarily very good) if every $L_{x,w,\sigma}$ is an
irreducible module over $\calU_x$ and $L_{x,w,\sigma}$ is isomorphic to $L_{x,w',\sigma'}$ if and only if
$w=w',\sigma=\sigma'$.
%-------------------------------------------------------------------------------------------------------------------------------
\subsection{Standard and co-standard modules}
For any (not necessarily good or very good) $x$ and for any $(w,\sigma)$ as in~\S\ref{general case}
we can define two modules $\Del_{x,w,\sigma}$ and $\nabla_{x,w,\sigma}$ over $\calU_x$ in the following way. Let $i_{x,w}\colon  \on{pt}\to Z^{\TT_x}_w$ be the embedding of (any) point into $Z^{\TT_x}_w$. Then we set
\begin{equation}
\Del_{x,w,\sigma}=\Hom_{\pi_1(Z^{\TT_x}_w)}(\sigma, i_{x,w}^!\calF_x),\qquad \nabla_{x,w,\sigma}=\Hom_{\pi_1(Z^{\TT_x}_w)}(\sigma, i_{x,w}^*\calF_x)
\end{equation}
(here we regard $i_{x,w}^!\calF_x$ and $i_{x,w}^*\calF_x$ just as vector spaces
(or rather modules over the fundamental group of the corresponding stratum),
i.e.\ we ignore the grading).
We shall call $\Del_{x,w,\sigma}$ \emph{the standard module} corresponding to $(x,w,\sigma)$. Similarly, we shall call $\nabla_{x,w,\sigma}$
\emph{the costandard module} corresponding to $(x,w,\sigma)$.

The canonical morphism of functors $i_{x,w}^!\to i_{x,w}^*$ gives rise to a homomorphism $\eta_{x,w,\sigma}\colon  \Del_{x,w,\sigma}\to\nabla_{x,w,\sigma}$
of $\calU_x$-modules. The image of $\eta_{x,w,\sigma}$ is naturally isomorphic to $L_{x,w,\sigma}$.

Let us now define a partial order on $W_x$ by setting $w_1\leq  w_2$ if $Z^{\TT_x}_{w_1}$ lies in the closure of $Z^{\TT_x}_{w_2}$.
Also, for simplicity, let us assume that $\Del_{x,w,\sigma}=\nabla_{x,w,\sigma}=0$ unless $\sigma$ has finite image (later on we shall even assume that only trivial $\sigma$ appears).
Then we claim that $\Del_{x,w,\sigma}$ has a filtration with terms corresponding to pairs $(w',\sigma')$ with $w'\geq w$ and with successive quotients being isomorphic to direct sums of $\dim \Hom_{\pi_1(Z^{\TT_x}_w)}(\sigma, i_{x,w}^!\calF_{x,w',\sigma'})$ copies of
$L_{x,w',\sigma'}$ (similarly for $\nabla_{x,w,\sigma}$ with $i_{x,w}^!$ replaced by $i_{x,w}^*$). Let us stress that here what we mean by the number $\dim \Hom_{\pi_1(Z^{\TT_x}_w)}(\sigma, i_{x,w}^!\calF_{x,w',\sigma'})$ is just the sum of dimensions of all cohomology spaces of the complex $\Hom_{\pi_1(Z^{\TT_x}_w)}(\sigma, i_{x,w}^!\calF_{x,w',\sigma'})$ (and not its Euler characteristics).

In particular, assume that $x$ is good and that $\Del_{x,w,\sigma}=0$ unless $\sigma$ is trivial (in particular, it implies that only trivial $\sigma$'s appear in (\ref{decomp})). In this case we shall omit $\sigma$ from the notation.
Then we see that the multiplicity of the irreducible module $L_{x,w'}$ in a standard module $\Del_{x,w}$ is equal to $\dim i_{x,w}^!\calF_{x,w'}$ and any simple subquotient of $\Del_{x,w}$ is isomorphic to some $L_{x,w'}$. Moreover, in this case $\calF_{x,w'}$ is the IC-sheaf of the closure of $Z^{\TT_x}_{w'}$.

%--------------------------------------------------------------------------------------------------------------------------------------------
\subsection{The ind-scheme version}\label{ind-int} In what follows we need a slight generalization of the above formalism. Namely, let us assume that $Z$ is an ind-scheme of ind-finite type over $\CC$. That means that $Z$ is the direct limits of schemes $Z^{\alpha}$ of finite type over $\CC$. We allow $\calF$ to be any pure ind-finite complex. In other words, $\calF$ is the direct sum of complexes $\calF^{\alpha}$  where all $\calF^{\alpha}$ are pure (of the same weight) and the support of $\calF^{\alpha}$ lies in $Z^{\alpha}$.

In this case,  the algebra $\Ext^*_{\TT}(\calF,\calF)$ might be too big, so we shall just assume that we are given a subalgebra $\calU$ of $\Ext^*_{\TT}(\calF,\calF)$ containing $S(\grt^*)$ which is free over it. Then we can still define $\calF_x$ as $\Phi_x(\calF)$ and we still get a homomorphism
$\calU_x\to \Ext^*(\calF_x,\calF_x)$ and the analysis of the previous subsections goes through (although the modules $\Del_{x,w,\sigma},\nabla_{x,w,\sigma}$ and $L_{x,w,\sigma}$ might become infinite-dimensional).
Recall the definition of a \emph{good} rational element $x\in \grt^*$ from the end
of~\S\ref{general case}. In the present ind-scheme case we additionally require that

%1) every $L_{x,w,\sigma}$ is an irreducible module over $\calU_x$ and $L_{x,w,\sigma}$ is
%isomorphic to $L_{x,w',\sigma'}$ iff $w=w',\sigma=\sigma'$ (the same condition as in the
%finite-dimensional case)

a) $Z^{\TT_x}$ is a scheme of finite type over $\CC$ (i.e.\ the limit
$\underset{\rightarrow}\lim (Z^{\alpha})^{\TT_x}$ stabilizes).

\noindent
In particular, if $x$ is good, then the set $W_x$ is finite. Assuming again that all $\Del_{x,w,\sigma}$ are 0 when $\sigma$ is non-trivial we see that in this case we have the following:

\textup{(i)} Each $\Del_{x,w}$ has finite length

\textup{(ii)} The only simple constituents of $\Del_{x,w}$ are of the form $L_{x,w'}$ for some $w'\geq w$

\textup{(iii)}
The multiplicity of $L_{x,w'}$ in $\Del_{x,w}$ is equal to
$\dim i_{x,w}^! \operatorname{IC}(\overline{Z^{\TT_x}_{w'}})$.

\subsection{Another definition of standard and costandard modules}
Let $x,w$ be as above and let $z\in Z_w^{\TT_x}$; we shall denote by $s_{x,w}$ the embedding of the
point $z$ into $Z$.\footnote{Recall that we denote by $i_{x,w}$ the embedding of $z$ into
  $\ZZ^{\TT_x}_w$.} To simplify the discussion we shall again assume that
$\Delta_{x,w,\sigma}=\nabla_{x,w,\sigma}=0$ if $\sigma$ is non-trivial.
Consider now $s_{x,w}^!\calF$ and $s_{x,w}^*\calF$ as objects of
$D_{\TT_x}(\on{pt})$. The algebra $\calU\underset{S(\grt^*)}\otimes S(\grt^*_x) $ then naturally maps to $\Ext^*_{\TT_x}(s_{x,w}^!\calF,s_{x,w}^!\calF)$ and to $\Ext^*_{\TT_x}(s_{x,w}^*\calF,s_{x,w}^*\calF)$. Hence $\calU\underset{S(\grt^*)}\otimes S(\grt^*_x) $  acts on
$H^*_{\TT_x}(s_{x,w}^!\calF)$ and $H^*_{\TT_x}(s_{x,w}^*\calF)$.
% \begin{NB}
%   Do you mean $H^*_{\TT_x}(s_{x,w}^!\calF)$ and
%   $H^*_{\TT_x}(s_{x,w}^*\calF)$ ? The same applies many below.
% \end{NB}%

\begin{Lemma}\label{standard-other}
We have natural isomorphisms of $\calU_x$-modules
\[
\Delta_{x,w}\simeq (H^*_{\TT_x}(s_{x,w}^!\calF))_x;\quad \nabla_{x,w}\simeq (H^*_{\TT_x}(s_{x,w}^*\calF))_x.
\]
Here the subscript $x$ stands for the specialization at $x$ with respect to the action of $S(\grt_x^*)$.
\end{Lemma}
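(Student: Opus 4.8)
The statement asserts that the standard module $\Delta_{x,w} = \Hom_{\pi_1(Z^{\TT_x}_w)}(\text{triv}, i_{x,w}^!\calF_x)$ coincides, as a $\calU_x$-module, with the specialization at $x$ of $H^*_{\TT_x}(s_{x,w}^!\calF)$, where $s_{x,w}\colon\on{pt}\to Z$ is the inclusion of the same point $z\in Z^{\TT_x}_w$, now viewed inside all of $Z$. The key point is that $s_{x,w} = j\circ i \circ i_{x,w}$ in the notation of the hyperbolic restriction diagram \eqref{eq:113}, where $i_{x,w}\colon\on{pt}\hookrightarrow Z^{\TT_x}$ and $(j\circ i)\colon Z^{\TT_x}\hookrightarrow Z$, so by functoriality of $!$-pullback one has a canonical identification $s_{x,w}^!\calF \simeq i_{x,w}^!(j\circ i)^!\calF$, and the entire content is to compare $(j\circ i)^!\calF$ with $\calF_x = i^*j^!\calF = \Phi_x(\calF)$ after taking equivariant cohomology and specializing. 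First I would write down the localization statement from \S\ref{localization}: the comparison map $H^*_\TT(Z^A,(j\circ i)^!\scF)\to H^*_\TT(Z^A,i^*j^!\scF)$ of \eqref{eq:26} becomes an isomorphism after inverting an element $f$ of $S(\grt^*)$ whose zeros lie on the walls, hence $f$ is nonvanishing at $x$ (since $x$ lies in the open chamber $\mathfrak C$ and $\TT_x = A$). Applying the same reasoning with $\scF$ replaced by $\calF$ but restricting the supports to the stratum: since $i_{x,w}$ is the inclusion of a point of $Z^{\TT_x}_w$, and $\pi_1$-invariants/coinvariants are exact, one reduces to the localization statement on the stratum $Z^{\TT_x}_w$ (or on a small $\TT$-invariant neighbourhood of $z$ inside $Z^{\TT_x}$), where $\TT$ acts through $\TT/\TT_x$ with finite stabilizers, so the localization element is a unit at $x$.

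Concretely, the steps are: \textbf{(1)} Factor $s_{x,w} = (j\circ i)\circ i_{x,w}$ and obtain the base-change identification $H^*_{\TT_x}(s_{x,w}^!\calF) \simeq H^*_{\TT_x}\big(i_{x,w}^!\,(j\circ i)^!\calF\big)$, using that $\TT_x$ acts trivially on $Z^{\TT_x}$ so that $\TT_x$-equivariant cohomology on $Z^{\TT_x}$ is ordinary cohomology tensored with $S(\grt_x^*)$ (the argument already spelled out in \S\ref{general case} for $\calF_x$). \textbf{(2)} Invoke the localization diagram of \S\ref{localization}: the map $(j\circ i)^!\calF \to \Phi_x(\calF) = \calF_x$ on $Z^{\TT_x}$ induces, on $\TT$-equivariant (equivalently $\TT_x$-equivariant, up to the $S(\grt^*)\to S(\grt_x^*)$ base change) cohomology of any $\TT$-invariant closed piece containing the $\TT$-fixed locus, an isomorphism after inverting $f$, and $f(x)\neq 0$. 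Restricting to the point $z$ — more precisely, taking the costalk at $z$, i.e.\ applying $i_{x,w}^!$ — preserves this: one covers $z$ by a $\TT$-invariant affine neighbourhood $U\subset Z^{\TT_x}$ on which $\TT/\TT_x$ acts with $z$ as a fixed point (up to finite stabilizer), and applies the $U$-version of the localization theorem, whose element is again a unit at $x$ because $z$ is $\TT_x$-fixed and the residual $\TT/\TT_x$-action contributes no walls through $x$. \textbf{(3)} Conclude that after specializing at $x$ (equivalently, after inverting $f$ and then setting the $S(\grt_x^*)$-coordinate equal to $x$), $H^*_{\TT_x}(s_{x,w}^!\calF)_x \simeq \big(H^*_{\TT_x}(Z^{\TT_x}, i_{x,w}^!\calF_x)\big)_x$, and the right-hand side, after taking $\pi_1(Z^{\TT_x}_w)$-invariants and specializing, is exactly $i_{x,w}^!\calF_x$ as an ungraded vector space, i.e.\ $\Delta_{x,w}$ by definition. \textbf{(4)} Check the $\calU_x$-module structures match: both sides carry an action of $\calU\otimes_{S(\grt^*)}S(\grt_x^*)$, on the left via the natural map $\calU\otimes_{S(\grt^*)}S(\grt^*_x)\to\Ext^*_{\TT_x}(s_{x,w}^!\calF,s_{x,w}^!\calF)$ described just above the lemma, on the right via the map $\calU_x\to\Ext(\calF_x,\calF_x)$ acting on the costalk; these are intertwined by the same localization isomorphism, which is a map of $\Ext$-algebra-modules by naturality of Braden's comparison and of $!$-pullback. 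The costandard case is identical with $!$ replaced by $*$ everywhere (using the second naive restriction and the commutative diagram at the end of \S\ref{localization}).

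\textbf{Main obstacle.} The routine parts are the factorization and the base-change identities; the delicate point is step (2), ensuring the localization element stays invertible after passing to the costalk at a single point $z$ of the stratum rather than over all of $Z^{\TT_x}$. One must be careful that the element $f$ produced by the localization theorem of \S\ref{localization} depends only on the geometry of the $\TT$-action near $z$ inside $Z$ and inside $Z^{\TT_x}$, and that all its zeros correspond to genuine walls — equivalently, to points $x'$ with $Z^{\TT}\neq Z^{\TT_{x'}}$ locally near $z$ — none of which pass through our chosen $x$ because $x$ lies in the interior of the chamber $\mathfrak C$ and $\TT_x$ is exactly the isotropy of the whole situation by construction. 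A secondary subtlety is that the decomposition of $\calF_x$ into shifted perverse sheaves (hence the isomorphism $\Ext_{\TT_x}(\calF_x,\calF_x)\simeq\Ext(\calF_x,\calF_x)\otimes S(\grt_x^*)$) is non-canonical, so one should phrase the module identification intrinsically — via the costalk complex and its $\calU_x$-action — rather than through a choice of splitting, exactly as the ungraded convention in the definitions of $\Delta_{x,w,\sigma}$ already anticipates.
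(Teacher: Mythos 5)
Your proposal takes the same route as the paper: factor $s_{x,w}$ through $j\circ i$, obtain the canonical comparison map $(j\circ i)^!\calF\to i^*j^!\calF=\calF_x$ from the natural transformation $i^!\to i^*$ for the closed embedding $i\colon Z^{\TT_x}\hookrightarrow\calA_Z$, check $\calU$-linearity from naturality, and finish by the localization theorem applied near $x$. This matches the paper's argument.

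However, your treatment of the ``main obstacle'' --- the passage from cohomology over $Z^{\TT_x}$ to the costalk at the single point $z$ --- goes through the wrong mechanism. You invoke a neighbourhood $U\subset Z^{\TT_x}$ of $z$ and argue that ``the residual $\TT/\TT_x$-action contributes no walls through $x$.'' But the zeros of the localizing element $f$ have nothing to do with the $\TT/\TT_x$-action on $Z^{\TT_x}$: the comparison $(j\circ i)^!\calF\to\calF_x$ has cone coming from the open complement $\calA_Z\setminus Z^{\TT_x}$, and the GKM localization theorem (which the paper cites as \cite[(6.2)]{GKM}) says that this cone is torsion over $S(\grt_x^*)$ with support in the union of Lie algebras of stabilizers $\Stab_z\subset\TT$ of points $z\in Z\setminus Z^{\TT_x}$. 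None of these Lie algebras contains $x$, precisely because $x$ generates $\TT_x$ and so any subtorus whose Lie algebra contains $x$ would already fix $z$. Crucially, this torsion statement holds at the level of objects in $D_{\TT_x}(Z^{\TT_x})$, not merely after taking global $H^*_{\TT_x}$; since $i_{x,w}^!$ is $S(\grt_x^*)$-linear, it preserves $f$-torsion, and the costalk comparison is then automatically an isomorphism after inverting $f$, hence after specialization at $x$. So the correct resolution is ``the localization is sheaf-theoretic and functorial, therefore it passes to costalks for free,'' not an auxiliary localization on a neighbourhood within $Z^{\TT_x}$. Once this is corrected, your proof agrees with the paper's.
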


\begin{proof}
We prove the lemma for $\Delta_{x,w}$ (the proof for $\nabla_{x,w}$ is completely analogous).
By the definition we have
$\Delta_{x,w}=(H^*_{\TT_x}(i_{x,w}^!\calF_x))_x$. Let $\calA_x\subset Z$ denote the
corresponding attracting set and let $j_x\colon \calA_x\to Z,\ i_x\colon Z^{\TT_x}\to \calA_x$
be the corresponding embeddings. Since  $\calF_x=i_x^*j_x^!\calF$ we get a natural map
$(i_x\circ j_x)^!\calF\to \calF_x$. This is a map of objects of the equivariant derived
category $D_{\TT_x}(Z^{\TT_x})$. Hence we get a canonical map
$s_{x,w}^!\calF\to i_{x,w}^!\calF_x$ of objects of $D_{\TT_x}(\{z\})$. Applying the functor $H^*_{\TT_x}$ we get a morphism
\begin{equation}\label{morphism}
H^*_{\TT_x}(s_{x,w}^!\calF)\to H^*_{\TT_x}(i_{x,w}^!\calF_x).
\end{equation}
Since the construction of the map (\ref{morphism}) was completely canonical, it commutes with the action of $\calU$ on both sides. The algebra $S(\grt_x^*)$ acts on both sides and it is enough now to check that this map becomes an isomorphism after tensoring both sides with the field of fractions of $S(\grt_x^*)$, i.e.\ it is an isomorphism at the generic point of $\grt_x^*$. Moreover, it is an isomorphism in a neighbourhood of $x\in \grt_x$. This follows from~\S\ref{localization}
and~\cite[(6.2)]{GKM}: the
  localization isomorphism holds true away from the union of Lie algebras of stabilizers
  in $\TT$ of points $z\in Z\setminus Z^{\TT_x}$, but $x$ generates $\TT_x$.
  %(since we are now working equivariantly only with respect to the torus $\TT_x$ and
  %by definition $x$ generates it).
\end{proof}

\section{Zastava spaces and enveloping algebras: statements and fixed points analysis}\label{fixed points}

\subsection{Notation}
Let $G$ be an almost simple simply connected algebraic group over $\CC$. We denote by $\calB$ the flag variety of
$G$. Let us also fix a pair of opposite Borel subgroups $B$, $B_-$ whose intersection is a maximal torus $T$ (thus we have
$\calB=G/B=G/B_-$). We denote by $\grg$ the Lie algebra of $G$ and by ${\mathbf t}$, $\grb$, $\grb_-$ the Lie algebras of $T$, $B$, $B_-$ respectively.
% \begin{NB}
%   Added by HN on June 10.
% \end{NB}%

Let $\Lam$ denote the cocharacter lattice of $T$; since $G$ is assumed to be simply connected, this is also the coroot lattice of $G$.
We denote by $\Lam_+\subset \Lam$ the sub-semigroup spanned by positive coroots. We say that $\alp\geq \beta$ (for $\alp,\beta\in \Lam$)
if $\alp-\beta\in\Lam_+$. We also let $\rho$ denote the half-sum of the positive coroots of $\grg$.

It is well-known that $H_2(\calB,\ZZ)=\Lam$ and that an element $\alp\in H_2(\calB,\ZZ)$ is representable by an algebraic curve
if and only if $\alp\in \Lam_+$.

Let $\grg^\svee$ denote the Langlands dual Lie algebra of $\grg$. It contains $\bft^*$ as its Cartan subalgebra. For any $\lam\in \bft$ we denote by $M(\lam)$ the Verma module over $\grg^\svee$ with highest weight $\lam$; we also denote by $\del M(\lam)$ the corresponding dual Verma module.
There is a unique (up to scalar) non-zero map $M(\lam)\to \del M(\lam)$. Its image is an irreducible $\grg^\svee$-module $L(\lam)$.

\subsection{The stack $\overline{\Bun}_B$ and quasi-maps}\label{bunbb}
Let $C$ be a smooth projective curve (later we are going to take $C=\PP^1$). Then we can consider the stack $\Bun_G$ of principal $G$-bundles on $C$ and the stack $\Bun_B$ of principal $B$-bundles. The stack $\Bun_B$ is disconnected; its connected components are in one-to-one correspondence with $\Lam$. For any $\theta\in \Lam$ we shall denote the corresponding connected component by $\Bun_B^{\theta}$.

We have a natural map $\Bun_B\to \Bun_G$ which is representable but not proper even when restricted to each $\Bun_B^{\theta}$. For each $\theta$ we can define certain canonical relative compactification $\overline\Bun_B^{\theta}\to \Bun_G$ of the map $\Bun_B^{\theta}\to \Bun_G$. Set also
$\overline{\Bun}_B=\cup_{\theta\in\Lam} \overline\Bun_B^{\theta}$ (cf. \cite{BG} or \cite{Bra-icm} for an expository review of the subject). A $\CC$-point of $\overline\Bun_B$ consists of the following data:

1) A $G$-bundle $\calP_G$

2) $T$-bundle $\calP_T$

3) For any dominant weight $\lam^\svee$ of $G$ an injective map of coherent sheaves $\kappa^{\lam^\svee}\colon  \calP_T^{\lam^\svee}\to \calP_G^{V(\lam^\svee)}$ . Here $V(\lam^\svee)$ denotes the irreducible representation of $G$ with highest weight $\lam^{\vee}$, $\calP_G^{V(\lam^\svee)}$ is the vector bundle on $C$ associated with $\calP_G$ via $V(\lam^\svee)$ and $\calP_T^{\lam^\svee}$ is the corresponding line bundle.

These data must satisfy the following conditions: for any two dominant weight $\lam^\svee, \mu^\svee$ we have $\kappa^{\lam^\svee+\mu^\svee}=\kappa^{\lam^\svee}\otimes \kappa^{\mu^\svee}$ (this makes sense since $V(\lam^\svee+\mu^\svee)$ is naturally a submodule of $V(\lam^\svee)\otimes V(\mu^\svee)$).

Consider the map $\Spec(\CC)\to \Bun_G$ which corresponds to the trivial $G$-bundle on $C$ and let $\theta\in \Lam$. Note that the cartesian product $\Bun_B^{\theta}\underset{\Bun_G}\times \Spec(\CC)$ is just the space of maps $C\to \calB$ of degree $\theta$. Then we define
the space of quasi-maps $C\to \calB$ of degree $\theta$ as $\overline{\Bun}_B^{\theta}\underset{\Bun_G}\times \Spec(\CC)$.

\subsection{Zastava spaces}
Let $\oZ^{\alp}$ denote the space of maps $\PP^1\to \calB$ of degree $\alp$ sending $\infty\in \PP^1$ to $\grb_-\in \calB$.
It is known~\cite{fkmm} that this is a smooth symplectic affine algebraic variety.

We denote by $Z^{\alp}$ the corresponding space of based quasi-maps of degree $\alp$ (i.e.\ quasi-maps of degree $\alp$ which have no defect at $\infty$ and such that the corresponding map sends $\infty$ to $\grb_-$). This is an affine algebraic variety. It has a natural action of the group $B$ and a commuting action of $\CC^{\times}$ (the former acts on $\calB$ preserving $B$ and the latter on $\PP^1$ preserving $\infty$).
In particular, it has an action of the torus $\TT:=T\times \CC^{\times}$.

Given $\alp,\beta\in \Lam_+$ such that $\alp\geq \beta$ we have a natural $\TT$-equivariant closed embedding
$Z^{\beta}\to Z^{\alp}$ (it is called ``adding defect at 0"). We denote by $Z$ the inductive limit of the $Z^{\alp}$'s with respect to this embedding. It is an ind-scheme endowed with an action of $\TT$.

\subsection{The setup}
Let us recall some notation already introduced in the Introduction.
Set $\calF^{\alp}=\IC(Z^{\alp}),\ \calF=\oplus_{\alp\in \Lam_+}\calF^{\alp}$.
We want to use the formalism of~\S\ref{ind-int}
for the pair $(Z,\calF)$. For this we need to describe the algebra $\calU$.

Let $U_{\hbar}(\grg^\svee)$ be the normalized universal enveloping algebra and let
$\calZ_{\hbar}(\grg^\svee)$ be its center. We then set
\[
\calU=U_{\hbar}(\grg^\svee)\underset{\calZ_{\hbar}(\grg^\svee)}\otimes S(\grt^*)
\]
as in~\S\ref{relatives}.

We are now ready to formulate the first main result of this paper:
\begin{Theorem}\label{main1}
  There exists a homomorphism of graded algebras $\iota\colon  \calU\to\Ext_{\TT}^*(\calF,\calF)$
  that satisfies the following properties:
  \begin{enumerate}
  \item The map $\iota$ is injective.
   \item Let $\theta\in \Lam$ and let $\calU_{\theta}$ be the part of $\calU$ of weight $\theta$. Then
   \[
   \iota(\calU_{\theta})\subset \prod\limits_{\alp\in \Lam_+}\Ext_{\TT}(\IC(Z^{\alp}),\IC(Z^{\alp+\theta})).
   \]
  \item
  Let $x\in \grt$ be of the form $(\mu,n)$ where $\mu\in \Lam, n\in \BN$. Then such $x$ is good in the sense of~\S\ref{ind-int}.
  \item
  For any $x$ as above the modules $\Del_{x,w,\sigma}$ and $\nabla_{x,w,\sigma}$ can only be non-zero if $\sig$ is trivial.
  \item
    For $x$ as above set $\lam=n^{-1}\mu$. Assume in addition that  $\lam$ is  integral, regular
    and dominant. Then the set
    $W_x$ is just the Weyl group $W$ of $\grg$. Moreover, in this case for any $w\in W$ we have
    $\Del_{x,w}\simeq M(w(\lam)-\rho)$ and
  $\nabla_{x,w}\simeq \del M(w(\lam)-\rho)$. More generally, for any integral dominant $\lam$ the set
  $W_x$ can naturally be identified with
  $W/\Stab_W(\lam)$, where $\Stab_W(\lam)$ denotes the stabilizer of $\lam$ in $W$, and the
  same description of standard and costandard modules holds true.
  \item
  Let $x=(\mu,n)$, where $\mu\in \Lam$. Assume that $\lam=n^{-1}\mu$ is
  regular and dominant (in the sense that
  $\langle\alpha^\svee,\lambda\rangle\not\in-\BN$ for any
  $\alpha^\svee\in R^{\vee+}$). Let $_\lambda W$ denote the subgroup of $W$ consisting of all $w\in W$
  such that $\lam-w(\lam)\in \Lam$. Then the set $W_x$ is equal to $_\lambda W$. Moreover,
  for any $w\in {}_\lambda W$ we have $\Del_{x,w}\simeq M(w(\lam)-\rho)$ and
  $\nabla_{x,w}\simeq \del M(w(\lam)-\rho)$. Similar statement holds in the case when $\lambda$
  is not necessarily regular (we shall not formulate it here in order not to overload the
  reader with notations). The appropriate choice of $W_x$ in the most general case is spelled out
  in~\S\ref{rati}.

\end{enumerate}
\end{Theorem}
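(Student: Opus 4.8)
The plan is to build $\iota$ from the derived geometric Satake equivalence and then verify each property by reducing to the fixed-point analysis of Section~\ref{fixed points}. First I would recall the derived Satake equivalence of Bezrukavnikov--Finkelberg: the $\TT$-equivariant derived category of the affine Grassmannian $\Gr_G$ (with its convolution/fusion structure) is equivalent to a suitable category of modules over $U_\hbar(\grg^\svee)$, with $H^*_{\TT}(\mathrm{pt})$ matching $S(\grt^*)$ on the center via Harish-Chandra. The key geometric input linking this to zastava spaces is that $Z^\alp$ maps to $\Gr_G$ (via the defect/degree data), and $\calF^\alp=\IC(Z^\alp)$ is, up to the factorization structure, governed by the convolution of minuscule/standard objects. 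Concretely, $\Ext^*_{\TT}(\calF,\calF)=\bigoplus_{\alp,\beta}\Ext^*_{\TT}(\IC(Z^\alp),\IC(Z^\beta))$ should receive the algebra structure of $\calU$ by transporting the action of $\grg^\svee$ through Satake; the grading by $\Lam$ on $\calU$ (weight-$\theta$ part raises degree $\alp\mapsto\alp+\theta$) corresponds to the translation on components of zastava spaces, which gives property~(2) essentially by construction. Injectivity (property~(1)) I would deduce from the fact that $\calF$ is pure and $Z$ is conical, so $\Ext^*_{\TT}$ is free over $H^*_{\TT}(\mathrm{pt})=S(\grt^*)$ and the composite of $\iota$ with specialization at generic $x$ is already faithful on $U(\grg^\svee)$; alternatively, via the localization theorem of~\S\ref{localization} one reduces to the nonzero statement at a single generic central character, where it follows from the standard faithfulness in category~$\calO$.

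Next I would prove property~(3), that every $x=(\mu,n)$ with $\mu\in\Lam$, $n\in\BN$ is good in the sense of~\S\ref{ind-int}. By Section~\ref{fixed points} the fixed-point scheme $Z^{\TT_x}$ is finite-dimensional (so condition~(a) of~\S\ref{ind-int} holds and $W_x$ is finite), and it is identified with an open Bruhat cell in a partial flag variety of an endoscopic reductive subgroup, stratified by opposite parabolic Schubert cells. The stalks/costalks of $\calF_x=\Phi_x(\calF)$ along these cells are then controlled by parabolic Kazhdan--Lusztig combinatorics. The point requiring care is that $x$ need not be \emph{very} good --- I do not claim $\calU_x\xrightarrow{\sim}\Ext(\calF_x,\calF_x)$ --- so goodness must be checked directly: each $L_{x,w,\sigma}$ must be irreducible over $\calU_x$ and the assignment $(w,\sigma)\mapsto L_{x,w,\sigma}$ injective. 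I would obtain this a posteriori: identify the standard modules $\Del_{x,w}$ with Verma modules (property~(5)--(6) below), note that $L_{x,w}=\im(\eta_{x,w}\colon\Del_{x,w}\to\nabla_{x,w})$ is then the image of $M(w(\lam)-\rho)\to\del M(w(\lam)-\rho)$, i.e. the simple $L(w(\lam)-\rho)$, and these are pairwise non-isomorphic for distinct cosets --- which forces goodness. Property~(4), that $\sigma$ must be trivial, follows because the strata $Z^{\TT_x}_w$ are Bruhat cells, hence simply connected (affine spaces), so $\pi_1=1$.

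The heart of the theorem is properties~(5) and~(6): identifying $\Del_{x,w}\simeq M(w(\lam)-\rho)$ and $\nabla_{x,w}\simeq\del M(w(\lam)-\rho)$. Here I would use \lemref{standard-other}, which re-expresses $\Del_{x,w}$ as the specialization at $x$ of $H^*_{\TT_x}(s_{x,w}^!\calF)$ for $s_{x,w}$ the inclusion of a torus-fixed point into $Z$. Since $\calF=\bigoplus\IC(Z^\alp)$ and the fixed point lies in a definite stratum, this is a direct sum over $\alp$ of (co)stalks of IC-sheaves of zastava spaces at a $\TT$-fixed point, which by known stalk computations for zastava spaces (intersection cohomology of semi-infinite orbits / the work of~\cite{BG} and \cite{fkmm}) assembles --- after tracking the $\calU$-action transported via Satake and specializing the torus parameter --- into precisely the weight spaces of the Verma module $M(w(\lam)-\rho)$. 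The shift by $\rho$ and the appearance of $w(\lam)$ rather than $\lam$ come from the identification in Section~\ref{fixed points} of the fixed-point stratum indexed by $w$ with the Schubert cell through $w$, together with the standard $\rho$-shift between the geometric and the highest-weight normalizations. The general (possibly singular $\lam$) case follows by replacing $W$ with $W/\Stab_W(\lam)$, resp.\ $_\lambda W$, exactly as the endoscopic/parabolic picture of Section~\ref{fixed points} dictates. I expect the main obstacle to be the compatibility bookkeeping: checking that the algebra homomorphism $\iota$ constructed abstractly from derived Satake really matches, stratum by stratum and with the correct gradings and central-character normalizations, the $\grg^\svee$-action one reads off from the (co)stalk computation --- i.e.\ that the two $\calU$-module structures on $H^*_{\TT_x}(s_{x,w}^!\calF)$ (one from $\iota$, one from the Verma module identification) coincide and not merely have the same composition series.
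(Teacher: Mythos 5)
Your overall strategy coincides with the paper's: the homomorphism is produced from derived Satake (implemented via the Hecke action on $\overline\Bun{}_B$ and de-equivariantization), injectivity is checked by letting $\calU$ act on the costalk of $\calF$ at the origin, part (3) is deduced a posteriori from (4)--(6), and (5)--(6) are reduced via Lemma~\ref{standard-other} to a costalk computation at torus-fixed points. Two steps, however, do not hold up as written. First, your justification of part (4) is false: the strata $Z^{\TT_x}_w$ are not Bruhat cells but the intersections $\CB^w_e=\CB^w\cap\CB_e$ of a $U_-$-orbit with the opposite big cell (see \S\ref{reg dom}), and these are not simply connected --- already for $G=\SL_2$ the open stratum $\CB^{w_0}_e$ is $\CC^\times$. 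So triviality of $\sigma$ cannot be read off from the topology of the strata; it has to come out of the costalk computation itself (the $\pi_1$-invariant part of the costalk already exhausts the graded character of the Verma module, which is what forces the monodromy to be trivial).

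Second, the identification $\Del_{x,w}\simeq M(w(\lam)-\rho)$ is asserted exactly where the real work lies. The paper's argument has two inputs you omit: (i) the costalk $\grs^!\calF$ at the origin is the universal Verma module $\calM$ (Theorem~\ref{universal Verma}); this requires the graded character formula $\sum_\alpha K_\alpha(q)$ of \cite{BFGM} together with a universality-plus-freeness argument showing the comparison map $\Xi$ is an isomorphism, and a separate bimodule computation of the highest-weight line to pin down the $-\rho$ shift; (ii) for a general fixed point $\phi$ in the stratum indexed by $w$, one chooses an auxiliary Borel $B'$ transversal to $B_-$ whose factorization morphism separates the smooth point $\phi_\alpha\in\oZ^\alpha$ from the defect at $0$, yielding $s_{x,w}^!\IC(Z^\theta)\cong s_{\phi_\alpha}^!\IC(Z^\alpha)\otimes\grs^!\IC(Z^\eta)$ with one-dimensional first factor, so that the costalk at $\phi$ reduces to the costalk at the origin of a smaller zastava space, producing the shift $\alpha=\lam-w\lam$ and hence the highest weight $w(\lam)-\rho$ after specialization. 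The compatibility you flag as the ``main obstacle'' --- that the Satake-induced $\calU$-action matches the one read off from factorization --- is precisely the content of Lemma~\ref{tautolog}, so it should be invoked rather than left open.
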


The proof will occupy~\S\S\ref{fixed}-\ref{standard and co-standard}.

\section{Fixed points on zastava}\label{fixed}
In this Section we want to explain how Theorem \ref{main1} implies the Kazhdan-Lusztig
conjecture using the formalism of~\S\ref{abs-form}.

\subsection{Integral case: the setup}\label{nota}
For a coweight $\lambda\in \Lam$
consider a one-parametric subgroup
$\TT_{\lam}:=\{(\lambda(c),c),\ c\in\CC^\times \}\subset\TT$ (the graph of
$\lambda$). Recall that the torus $\TT$ acts on the zastava space $Z^\alpha$.
We are interested in the
fixed point set $(Z^\alpha)^{\TT_{\lam}}$. In principle, the case of $\lambda$ is dominant is of principal importance to us, but for the sake of completeness and for future applications we shall treat the non-dominant case too. Recall that
$\oZ^\alpha\subset Z^\alpha$ stands for the moduli space of
degree $\alpha$ based maps from $\BP^1$ to the flag variety $\CB$.
The $T$-fixed points in $\CB$ are numbered by the Weyl group $W$ so that
the neutral element $e\in W$ corresponds to $\grb_-\in\CB$, and the longest
element $w_0\in W$ corresponds to $\grb\in\CB$. We denote the orbit
$U_-\cdot w:=\on{Rad}B_-\cdot w$ (resp.\ $U\cdot w:=\on{Rad}B\cdot w$) by $\CB^w$ (resp.\ $\CB_w$).
The intersection $\CB^w_y:=\CB^w\cap\CB_y$ is nonempty if and only if $w\geq y$.
By a slight abuse of notation, we will denote the intersection of $\CB_y$
with the Schubert variety $\overline\CB{}^w$ by $\overline\CB{}^w_y$.

For a subset $J\subset I$ of simple roots we denote by $L_J$ the corresponding
Levi subgroup (so that $L_\emptyset=T$). We denote by $W_J$ the parabolic subgroup of $W$
generated by the simple reflections in $J$. The corresponding generalized flag
variety $G/P_J$ is denoted $^J\CB$, and the flag variety of $L_J$ is denoted
$_J\CB$. We have a natural projection
$^J\CB\stackrel{\pi_J}{\longleftarrow}\CB$ with a fiber $_J\CB$.
The $T$-fixed points in $^J\CB$
are numbered by the right cosets $W/W_J=:W^J$.
\begin{NB}
  Did we define $W_J$ ?
  \begin{NB2}
    Done.
    \end{NB2}%
\end{NB}%
As above, we have $U_\pm$-orbits,
their intersections and closures $^J\overline\CB{}^w_y$ and so on. Note that
the open cell $^J\CB_e$ is the orbit of $\pi_J(\grb_-)$ under the free action of
the unipotent radical $^JU:=\on{Rad}P_J$.

\subsection{Regular dominant case}\label{reg dom}
First let $\lambda$ be regular dominant.
\begin{Lemma}\label{phix}
  If there is $w\in W$ such that $\lambda-w\lambda=\alpha$, then
$(\oZ^\alpha)^{\TT_{\lam}}\simeq\CB^w\cap\CB_e$. Otherwise,
$(\oZ^\alpha)^{\TT_{\lam}}$ is empty.
\end{Lemma}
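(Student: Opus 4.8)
The plan is to analyze the fixed point set $(\oZ^\alpha)^{\TT_\lambda}$ by thinking of a point of $\oZ^\alpha$ as a map $f\colon \PP^1\to \calB$ of degree $\alpha$ sending $\infty$ to $\grb_-$, and to spell out what it means for such a map to be fixed by the one-parameter subgroup $\TT_\lambda = \{(\lambda(c),c)\}$. The equation for $f$ being fixed is $\lambda(c)\cdot f(x) = f(cx)$ for all $c\in\CC^\times$, $x\in\PP^1$. First I would observe that evaluating at $x = 1$ (or any fixed basepoint in $\CC^\times\subset\PP^1$) shows that the image of $f$ on $\CC^\times$ is contained in a single $T$-orbit through $f(1)$ traced out under the $\lambda$-flow; but $f$ extends over $0$ and $\infty$, so $f(0)$ and $f(\infty)=\grb_-$ are $T$-fixed points of $\calB$, hence correspond to Weyl group elements. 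The relation forces $f(\CC^\times)$ to lie in a single $T$-orbit on $\calB$, i.e. $f$ maps $\PP^1$ into the closure of a $T$-orbit, which is (the closure of) a one-dimensional $T$-orbit when $\lambda$ is regular, joining two $T$-fixed points.

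Second I would pin down the two endpoints: $f(\infty) = \grb_-$ corresponds to $e\in W$, and $f(0)$ corresponds to some $w\in W$. The degree of such a map into the one-dimensional $T$-orbit curve $\overline{T\cdot p}$ joining $w\cdot\grb$-type points is computed by a weight/$\mathfrak{sl}_2$ calculation: the $\CC^\times$-weight of the tangent line at the two fixed points, together with the requirement that $f$ be equivariant of the given shape, forces the degree to be exactly $\lambda - w\lambda \in \Lambda$. So a nonempty fixed locus can occur only if $\alpha = \lambda - w\lambda$ for some $w\in W$, and when $\lambda$ is regular such $w$ is unique (since $\mathrm{Stab}_W(\lambda)=\{e\}$). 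Conversely, when $\alpha = \lambda - w\lambda$, every fixed map is determined by its restriction to the relevant one-dimensional $T$-orbit closure, and the only remaining freedom is the choice of where this orbit sits — equivalently the choice of the transverse position — which I claim is exactly parametrized by $\CB^w\cap\CB_e$: the point $f(0)$ must lie in the $U_-$-orbit $\CB^w$ (to have degree-$\alpha$ and correct leading term), while the condition that the map is \emph{based}, i.e. sends $\infty$ to $\grb_-$ with no further constraint, together with genericity of the orbit, places $f(0)$ in the big cell $\CB_e = U\cdot\grb_-$ (the free $U$-orbit through $\grb_-$). The identification $(\oZ^\alpha)^{\TT_\lambda}\xrightarrow{\sim}\CB^w_e$ then sends $f$ to $f(0)$.

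Concretely I would realize this last bijection as follows: given $g\in\CB^w\cap\CB_e$, write $g = u\cdot\grb_-$ with $u\in U$ uniquely; the desired fixed map is $x\mapsto u\cdot\lambda(x)\cdot w\cdot\grb$ — wait, rather one builds $f$ as a $\TT_\lambda$-equivariant map interpolating $\grb_-$ at $\infty$ and $g$ near $0$; the cocharacter $\lambda$ of $T$ acts on $\calB$ with $\lim_{c\to 0}\lambda(c)\cdot g \in \calB^T$, and since $g\in\CB_e$, regularity of $\lambda$ gives $\lim_{c\to 0}\lambda(c)g = \grb_-$ while the "speed" along the orbit records $w$; conversely $\lim_{c\to\infty}\lambda(c)^{-1}g$ gives the other endpoint. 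One checks the resulting $f$ has degree $\lambda - w\lambda$ by the curve-class computation ($H_2(\calB) = \Lambda$, the orbit closure of a regular $T$-orbit through $u\grb_-$ represents the class $\lambda - w\lambda$ under the $\lambda$-parametrization — this is the standard Bialynicki-Birula / moment-map computation), and that the assignment $f\mapsto f(0)$ is inverse to it. The main obstacle I expect is the degree bookkeeping: carefully verifying that the curve traced by the $\lambda$-flow through a generic point of $\CB_e$ represents precisely the homology class $\lambda - w\lambda$ (and not some multiple or a different coroot combination), and handling the extension of $f$ over $0$ and $\infty$ so that no defect is introduced — i.e. that we genuinely land in $\oZ^\alpha$ and not merely in the quasi-maps space $Z^\alpha$. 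This is where one must use that $\lambda$ is regular (so the orbit is honestly one-dimensional and its closure is a $\PP^1$) and dominant (so that $\lambda - w\lambda \in \Lambda_+$, ensuring $\alpha$ is an effective curve class); the non-regular case, deferred to later subsections, is exactly where the one-dimensional orbit degenerates and $\CB^w_e$ must be replaced by a cell in a partial flag variety.
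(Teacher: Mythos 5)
Your overall strategy matches the paper's: classify $\TT_\lambda$-fixed based maps via the $\lambda(\CC^\times)$-flow, identify the two limit points at $0$ and $\infty$ as $T$-fixed points, show that $\phi(\infty)=\grb_-$ and $\phi(0)=w$ for some $w\in W$, and read off the degree as $\lambda-w\lambda$. However, there is a concrete error in how you identify the fixed locus with $\CB^w\cap\CB_e$.

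You write that ``the identification $(\oZ^\alpha)^{\TT_\lambda}\xrightarrow{\sim}\CB^w_e$ then sends $f$ to $f(0)$,'' and earlier that ``$f(0)$ must lie in the $U_-$-orbit $\CB^w$ \ldots [and] the big cell $\CB_e$.'' But for any $\TT_\lambda$-fixed $f$, the point $f(0)$ is a $\lambda(\CC^\times)$-fixed, hence $T$-fixed, point of $\calB$, so $f(0)$ is literally the Weyl group element $w$: it does not vary, and $w\in\CB^w\cap\CB_e$ only when $w=e$. The correct parametrizing map is evaluation at a point of $\CC^\times$, say $f\mapsto f(1)$, which is what the paper does. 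With $z:=f(1)$ one has $f(c)=\lambda(c^{-1})\cdot z$, and then $f(\infty)=\lim_{c\to\infty}\lambda(c^{-1})z=\grb_-$ forces $z\in\CB_e$ (attracting cell of $\grb_-$), while $f(0)=\lim_{c\to 0}\lambda(c^{-1})z=\lim_{u\to\infty}\lambda(u)z=w$ is equivalent to $z\in\CB^w$ (repelling cell of $w$). Hence $z\in\CB^w\cap\CB_e$. In your ``concretely'' paragraph the same confusion appears: you write $\lim_{c\to\infty}\lambda(c)^{-1}g$ for ``the other endpoint,'' but since $\lambda(c)^{-1}=\lambda(c^{-1})$ this limit is again $\grb_-$, not $w$; the other endpoint is $\lim_{c\to\infty}\lambda(c)g$.

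A second, lesser, issue: you assert that $f(\CC^\times)$ lies in ``a single $T$-orbit \ldots which is (the closure of) a one-dimensional $T$-orbit.'' This is false when $\operatorname{rk}G>1$: $f(\CC^\times)=\lambda(\CC^\times)\cdot z$ is a one-dimensional $\lambda(\CC^\times)$-orbit, but $T\cdot z$ is $\dim T$-dimensional for generic $z\in\CB_e$; the image curve is not $T$-invariant in general. The degree computation can still be carried out (and the paper delegates it to an external lemma), but your justification of it via the ``one-dimensional $T$-orbit curve'' picture does not apply as stated. Finally, your worry about defect at $0$ and landing in $Z^\alpha$ rather than $\oZ^\alpha$ is unfounded here: $\calB$ is projective, so the partial map $\CC^\times\to\calB$, $c\mapsto\lambda(c^{-1})z$ always extends to an honest morphism $\PP^1\to\calB$, which is what being in $\oZ^\alpha$ requires.
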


\begin{proof} Take a point
$z\in\CB_e=U\cdot \grb_-$ (the big Schubert cell centered at $\grb_-\in\CB$ in the
flag variety). Consider a map $\CC^\times \to\CB,\ c\mapsto\lambda(c^{-1})\cdot z$.
It extends uniquely to a map
$\CC^\times \subset\BP^1\stackrel{\phi_z}{\longrightarrow}\CB$. Clearly, $\phi_z$
is $\TT_{\lam}$-invariant, and $\phi_z(\infty)=\grb_-$ (since $\lambda(\CC^\times )$
contracts $U\cdot \grb_-$ to $\grb_-$). Conversely, given a $\TT_{\lam}$-invariant
map $\phi\colon \BP^1\to\CB$ such that $\phi(\infty)=\grb_-$, we set
$z_\phi:=\phi(1)\in U\cdot \grb_-$.

Clearly, $\phi(0)$ is a $\lambda(\CC^\times )$-fixed point (equivalently, $T$-fixed
point) $w\in W\subset\CB$. The argument in the proof of~\cite[Lemma~5.3]{BFI}
shows that $\deg\phi=\lambda-w\lambda\in\Lambda_+=\BN[I]$. Finally, for
$\phi=\phi_z$, we have $\phi(0)=w$ if and only if $z\in\CB^w$.
\end{proof}

We now want to study the fixed points of $\TT_{\lam}$ in $Z^{\alp}$. In what follows
$(Z^{\alp})^{\TT_{\lam}}$ will always denote the corresponding \emph{reduced} scheme.
%--------------------------------------------------------------------------------------------------------------------
\begin{Corollary}\label{zas} There exists a morphism
\[
(Z^\alpha)^{\TT_{\lam}}\to
\bigcup\limits_{w: \lambda-w\lambda\leq\alpha}\overline\CB{}^w_e
\]
(the latter is understood
as a locally closed subvariety of $\calB$) which is bijective on $\CC$-points.
Moreover, assume that $\alp$ is such that the subvariety
$\bigcup_{w: \lambda-w\lambda\leq\alpha}\overline\CB{}^w_e$ of $\calB$ is
irreducible (in particular, this is true when $\alp$ is sufficiently large).
Then the above morphism is an isomorphism.\footnote{The statement is probably
true for any $\alp$ but at this point we don't know a proof.}
\end{Corollary}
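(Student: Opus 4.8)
The plan is to produce the map and check the two claims in turn, relying on \lemref{phix} to handle the open part. First I would construct the morphism $(Z^\alpha)^{\TT_{\lam}}\to\CB$. A $\CC$-point of $(Z^\alpha)^{\TT_{\lam}}$ is a $\TT_{\lam}$-invariant based quasi-map $\BP^1\dashrightarrow\CB$ of degree $\alpha$; restricting over the open locus where the quasi-map is an honest map gives, at the generic point, a $\TT_{\lam}$-invariant rational map, and evaluating at a point where it is defined (say at $1$) lands in $\CB_e=U\cdot\grb_-$ by the same contraction argument as in \lemref{phix}. One has to see this is a genuine morphism of schemes, not just a map on $\CC$-points: the cleanest way is to stratify $Z^\alpha$ by defect, note that on the locus of maps it is the map $z\mapsto z_\phi:=\phi(1)$ which is manifestly algebraic, and then observe that the defect strata are $\TT_{\lam}$-invariant and contribute points that still satisfy $\phi(0)=w$ for the appropriate $w$, so the target is $\bigcup_{w:\lambda-w\lambda\le\alpha}\overline\CB{}^w_e$ by the degree bound $\deg\phi=\lambda-w\lambda$ together with the fact that defect of size $\gamma$ drops the curve-class by $\gamma$. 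This already yields the displayed morphism, and \lemref{phix} identifies its restriction to the open (defect-free) part with $\bigsqcup_{w:\lambda-w\lambda=\alpha}\CB^w_e$, hence gives bijectivity on $\CC$-points once one checks that every point of $\overline\CB{}^w_e$ with $\lambda-w\lambda<\alpha$ is hit by a unique quasi-map with defect $\alpha-(\lambda-w\lambda)$ concentrated at $0$ --- uniqueness because the defect and the underlying map together determine the based quasi-map, surjectivity because one can add a defect of any prescribed degree at $0\in\BP^1$.

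For the second claim, assume $\bigcup_{w:\lambda-w\lambda\le\alpha}\overline\CB{}^w_e$ is irreducible. The strategy is the standard one for promoting a bijective morphism to an isomorphism: show the source $(Z^\alpha)^{\TT_{\lam}}$ is reduced (it is, by our convention of taking the reduced scheme), show the target is normal, and show the morphism is proper and birational, then invoke Zariski's main theorem / the fact that a bijective birational morphism to a normal variety is an isomorphism. Normality of the target: $\overline\CB{}^w_e$ is the intersection of the Schubert variety $\overline\CB{}^w$ with the open cell $\CB_e$, hence an open subset of a (normal) Schubert variety, so it is normal; a union of normal varieties that is irreducible need not be normal in general, but here the union $\bigcup_{w:\lambda-w\lambda\le\alpha}\overline\CB{}^w_e$ is a single Schubert variety intersected with $\CB_e$ --- indeed the set $\{w:\lambda-w\lambda\le\alpha\}$ is an order ideal for the Bruhat order (if $\lambda-w\lambda\le\alpha$ and $y\le w$ then $\lambda-y\lambda\le\lambda-w\lambda\le\alpha$, using that $\lambda$ is dominant so $\lambda-y\lambda$ is monotone in $y$), so the union equals $\overline\CB{}^{w_{\max}}_e$ for the maximal such $w_{\max}$ precisely when such a maximum exists, which is exactly the irreducibility hypothesis. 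Hence the target is normal.

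It remains to see the morphism is proper (equivalently, here, finite, since it is quasi-finite) and that the source is irreducible of the same dimension. Properness should follow from properness of the quasi-map compactification: $(Z^\alpha)^{\TT_{\lam}}$ is a closed subscheme of $Z^\alpha$, the map to $\CB$ factors through the proper-over-$\CB$ data, and quasi-finiteness plus properness gives finiteness; then a finite birational morphism onto a normal variety is an isomorphism. The main obstacle I expect is precisely the scheme-theoretic control: verifying that the map on $\CC$-points upgrades to a morphism of schemes which is proper, and pinning down why irreducibility of the target forces $(Z^\alpha)^{\TT_{\lam}}$ to be irreducible of matching dimension rather than having embedded or lower-dimensional extra components in the non-reduced structure --- this is why the statement is only proved under the irreducibility hypothesis and flagged as open in general. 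A safer route, if the direct argument is delicate, is to use \lemref{standard-other} and the abstract formalism: the hyperbolic restriction $\calF_x=\Phi_x(\calF)$ is computed stratum-by-stratum, and one can identify $(Z^\alpha)^{\TT_{\lam}}$ with the Bruhat cell by matching attracting-set combinatorics, but carrying that out still reduces to the same geometric input.
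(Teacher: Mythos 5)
Your proposal is correct and follows essentially the same route as the paper: evaluation at $1\in\BP^1$ is well defined because a $\TT_\lam$-invariant based quasi-map can only have defect at $0$, bijectivity on $\CC$-points comes from \lemref{phix} together with adding the complementary defect at $0$, and the isomorphism statement is deduced from reducedness of $(Z^\alpha)^{\TT_{\lam}}$ and normality of the target, which under the irreducibility hypothesis is open in a single Schubert variety. The paper's proof is terser on the final step (it does not spell out the quasi-finiteness/Zariski's-main-theorem bookkeeping you raise), but your extra care there is consistent with, not divergent from, its argument.
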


\begin{proof} A $\TT_{\lam}$-invariant based quasimap can have a defect only at $0\in\BP^1$.
So the evaluation at $1\in\BP^1$ is a well-defined morphism
$p\colon (Z^\alpha)^{\TT_{\lam}}\to\CB$. The proof of Lemma \ref{phix} shows that the image
of $p$ coincides with $\bigcup_{w: \lambda-w\lambda\leq\alpha}\overline\CB{}^w_e$,
and $p$ is bijective onto its image. For $\alpha$ such that
$\bigcup_{w: \lambda-w\lambda\leq\alpha}\overline\CB{}^w_e$ is irreducible (and hence
open in a Schubert variety $\overline\CB{}^y$ for some $y\in W$), the
normality of $\overline\CB{}^y$ and reducedness of $(Z^\alpha)^{\TT_{\lam}}$
finishes the proof.
\end{proof}
\begin{Corollary} The (reduced) scheme $Z^{\TT_{\lam}}$ is isomorphic to $\calB_e$. The stratification induced on $\calB_e$ by the standard stratification on $Z$ is that by $\calB^w_e$'s. In particular, this proves the first assertion of part (4) of Theorem \ref{main1}.
\end{Corollary}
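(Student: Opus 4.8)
The plan is to deduce the final corollary directly from \corref{zas}, together with a compatibility check between the stratification on $Z$ defined via ``adding defect at $0$'' and the Schubert stratification on $\calB_e$. First I would pass to the limit over $\alpha$: by \corref{zas}, for each $\alpha \in \Lam_+$ there is a morphism $(Z^\alpha)^{\TT_{\lam}} \to \bigcup_{w:\lambda - w\lambda \leq \alpha} \overline\CB{}^w_e$ which is bijective on $\CC$-points, and it is an isomorphism once $\alpha$ is large enough that the right-hand side is irreducible (hence open in a Schubert variety). Since $\lambda$ is regular dominant, the condition $\lambda - w\lambda \leq \alpha$ is satisfied by \emph{every} $w \in W$ for $\alpha$ sufficiently large (e.g.\ $\alpha \geq \sum_{w}(\lambda - w\lambda)$), so for such $\alpha$ the union is all of $\bigcup_{w \in W}\overline\CB{}^w_e = \calB_e$ (the big cell, which is the closure within itself of the top stratum $\CB^{w_0}_e$ and is irreducible). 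Thus the inductive limit $Z^{\TT_{\lam}} = \varinjlim (Z^\alpha)^{\TT_{\lam}}$ stabilizes and, as a reduced scheme, is isomorphic to $\calB_e$.

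Next I would identify the stratification. The standard stratification on $Z$ is the one appearing in the formalism of \S\ref{ind-int}: it is the stratification of $Z^{\TT_{\lam}}$ by the strata $Z^{\TT_{\lam}}_w$ with respect to which $\calF_x = \Phi_x(\calF)$ is constructible. Under the identification of \corref{zas}, a $\TT_{\lam}$-invariant based quasimap of degree $\alpha$ is sent to the value $p(z) = \phi_z(1) \in \calB_e$ of the associated honest map at $1 \in \BP^1$; and by the proof of \lemref{phix}, $\phi_z(0) = w$ precisely when $z \in \CB^w$, while $\deg\phi_z = \lambda - w\lambda$. Hence the locus of $\TT_{\lam}$-fixed points whose underlying map hits the $T$-fixed point $w$ at $0$ is exactly $\CB^w \cap \CB_e = \CB^w_e$, and these pieces are precisely the strata through which the degree of the underlying map (equivalently, the way in which the defect is ``added at $0$'' under the closed embeddings $Z^\beta \hookrightarrow Z^\alpha$) varies. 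So the stratification induced on $\calB_e$ by the standard stratification on $Z$ is the stratification by the $\CB^w_e$'s.

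Finally I would record that this gives the first assertion of part~(4) of \thmref{main1}. Indeed part~(4) asserts, for $\lambda$ integral, regular and dominant, that $W_x = W$; and we have just exhibited a bijection $w \mapsto \CB^w_e$ between $W$ and the strata of $Z^{\TT_x}$ (here $x = (\mu, n)$ with $\lambda = n^{-1}\mu$, so $\TT_x = \TT_{\lam}$), which is exactly the statement that the finite index set $W_x$ of \S\ref{general case} may be taken to be $W$.

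The main obstacle I expect is the bookkeeping in the second paragraph: namely making precise that the stratification coming from the ``adding defect at $0$'' embeddings $Z^\beta \hookrightarrow Z^\alpha$, after passing to $\TT_{\lam}$-fixed points and transporting along \corref{zas}, is literally the Schubert cell decomposition $\calB_e = \bigsqcup_w \CB^w_e$ rather than some coarsening or refinement of it. This requires knowing that $\IC(Z^\alpha)$ restricted to the fixed locus is constructible exactly with respect to the $\CB^w_e$-stratification — which follows from the proof of \lemref{phix} identifying each stratum with a fixed value of $\phi_z(0) \in W$ and a fixed degree $\lambda - w\lambda$ — but the argument should be spelled out carefully, and is the only non-formal point; the rest is a direct application of \corref{zas} and the stabilization of the inductive limit.
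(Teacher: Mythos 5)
Your proof is correct and supplies exactly the argument the paper leaves implicit: the Corollary is stated immediately after Corollary~\ref{zas} with no proof of its own, and your two steps (stabilization of $\varinjlim (Z^\alpha)^{\TT_\lam}$ once $\alpha$ exceeds $\sum_w(\lambda-w\lambda)$, so the union of the $\overline\CB{}^w_e$ becomes the whole irreducible cell $\calB_e$; and identification of the strata with $\CB^w_e$ via the dichotomy $\phi_z(0)=w \Leftrightarrow z\in\CB^w$, $\deg\phi_z=\lambda-w\lambda$ from the proof of Lemma~\ref{phix}) are precisely what is needed. One small discrepancy, inherited from the paper rather than introduced by you: in Theorem~\ref{main1} the assertion $W_x=W$ is the first assertion of part~(5), not part~(4); part~(4) says that only trivial local systems $\sigma$ contribute, which also follows from your description since each $\CB^w_e$ is an affine space, hence simply connected.
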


\subsection{Regular non-dominant case} Now let $\lambda$ be regular, and $y\in W$ be such that
$\lambda':=y^{-1}\lambda$ is (regular) dominant.

\begin{Proposition}\label{reg zas}
There is a morphism $(Z^\alpha)^{\TT_{\lam}}\to\bigcup\limits_{w: \lambda'-w\lambda'\leq\alpha}
\overline\CB{}^w_y$ which defines a bijection on $\CC$-points. It is an isomorphism if the RHS is irreducible.
\end{Proposition}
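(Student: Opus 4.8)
The plan is to reduce Proposition~\ref{reg zas} to the regular dominant case (Lemma~\ref{phix} and Corollary~\ref{zas}) via a twist by the Weyl group element $y$. First I would study $\TT_{\lam}$-invariant based quasi-maps $\phi\colon\BP^1\to\CB$ with $\phi(\infty)=\grb_-$ by the same argument as in Lemma~\ref{phix}: such a $\phi$ can only have a defect at $0$, so evaluation at $1$ gives a well-defined morphism $p\colon(Z^\alpha)^{\TT_{\lam}}\to\CB$, and I must identify its image and show $p$ is injective on $\CC$-points. The key local computation: for $z_\phi=\phi(1)$, the contraction of $z_\phi$ under $\lambda(c^{-1})$ as $c\to 0$ must land at a $T$-fixed point, and the compatibility $\phi(\infty)=\grb_-$ forces $z_\phi$ to lie in $U\cdot\grb_-=\CB_e$ still (the cocharacter $\lambda$ contracts $\CB_e$ to $\grb_-$ regardless of whether $\lambda$ is dominant, since $e\geq$ nothing nontrivial below it). But now because $\lambda$ is non-dominant, the limit $\lim_{c\to 0}\lambda(c^{-1})z_\phi$ is not forced to be $\grb_-$; rather, the relevant attracting/repelling behaviour is governed by $\lambda'=y^{-1}\lambda$, which \emph{is} dominant. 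Conjugating by $y$ (equivalently, applying the automorphism of $\CB$ induced by $y$ and tracking where $\grb_-$ goes, namely to the fixed point $y\in W\subset\CB$) translates the $\TT_{\lam}$-fixed-point problem into the $\TT_{\lam'}$-fixed-point problem, and Lemma~\ref{phix} then shows $\deg\phi=\lambda'-w\lambda'$ for the appropriate $w$, with the image of $p$ being $\overline\CB{}^w_y$ rather than $\overline\CB{}^w_e$.

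Next I would assemble these to get the morphism $(Z^\alpha)^{\TT_{\lam}}\to\bigcup_{w:\lambda'-w\lambda'\leq\alpha}\overline\CB{}^w_y$ exactly as in Corollary~\ref{zas}: the target is the union over all $w$ with $\lambda'-w\lambda'\leq\alpha$ of the locally closed subvarieties $\overline\CB{}^w_y\subset\CB$, and $p$ is bijective onto this union by the fixed-point analysis of the preceding paragraph. The only subtle point beyond Corollary~\ref{zas} is making sure the defect-counting argument from~\cite[Lemma~5.3]{BFI} still yields $\deg\phi=\lambda'-w\lambda'\in\Lambda_+$ after the $y$-twist; this is automatic because $\lambda'$ is dominant so $\lambda'-w\lambda'\in\Lambda_+$ for all $w\in W$, which is precisely why we chose $y$ to make $\lambda'$ dominant.

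For the isomorphism statement, I would argue as in Corollary~\ref{zas}: when the right-hand side is irreducible it is open in a Schubert variety $\overline\CB{}^{z}$ for a suitable $z\in W$ (here $\overline\CB{}^{z}$ is the closure of a $U$-orbit through some $T$-fixed point), and then the normality of the Schubert variety $\overline\CB{}^{z}$ together with the reducedness of $(Z^\alpha)^{\TT_{\lam}}$ (which we adopt as our standing convention, as stated just before Corollary~\ref{zas}) promotes the bijective morphism $p$ to an isomorphism via Zariski's main theorem. I expect the main obstacle to be the bookkeeping in the first paragraph: getting the $y$-twist exactly right so that $\phi(\infty)=\grb_-$ matches the dominant-case condition $\phi(\infty)=\grb_-$ after conjugation, and confirming that the attracting-set geometry of the non-dominant cocharacter $\lambda$ on $\CB$ is the $y$-conjugate of that of $\lambda'$ — in particular that the fixed point $\phi(0)$ lands in $\CB^w$ if and only if $z_\phi\in\CB^{w}$ in the twisted sense, so that the stratum labels come out as $\overline\CB{}^w_y$ and not some other Bruhat cell. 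Once that geometric dictionary is pinned down, the rest is a verbatim repetition of the proofs of Lemma~\ref{phix} and Corollary~\ref{zas}.
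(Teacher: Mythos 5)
Your overall strategy — twist by $y$ to reduce to the regular dominant case of Lemma~\ref{phix} and Corollary~\ref{zas} — is the same as the paper's, which simply considers $\dot y^{-1}\cdot\phi$ for a representative $\dot y$ of $y$ in $N(T)$ and then repeats the argument of Corollary~\ref{zas}. But there is a genuine error in the way you set this up, and it is not cosmetic.

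You assert that ``the compatibility $\phi(\infty)=\grb_-$ forces $z_\phi$ to lie in $U\cdot\grb_-=\CB_e$ still,'' justifying this by the claim that $\lambda$ contracts $\CB_e$ to $\grb_-$ regardless of dominance. This is false. Writing out the $\TT_\lambda$-invariance gives $\phi(c)=\lambda(c^{-1})z_\phi$, so the basing condition reads $\grb_-=\phi(\infty)=\lim_{c\to 0}\lambda(c)\cdot z_\phi$; thus $z_\phi$ must lie in the \emph{attracting set} of $\grb_-$ under $c\mapsto\lambda(c)$ as $c\to 0$, and this attracting set depends on $\lambda$, not just on the base point. For regular dominant $\lambda$ it is $\CB_e$; but already for $\lambda$ antidominant (so $y=w_0$) in $\SL_2$ the attracting set of $\grb_-$ under $\lambda(c)$ degenerates to the single point $\{\grb_-\}$, not the big cell $\CB_e$. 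More tellingly, the image in the conclusion of the proposition lies in $\CB_y$, and $\CB_y\cap\CB_e=\emptyset$ for $y\neq e$; so ``$z_\phi\in\CB_e$ still'' is irreconcilable with the statement you are proving. You also conflate the two limits: you speak of $\lim_{c\to 0}\lambda(c^{-1})z_\phi$ ``not being forced to equal $\grb_-$,'' but that limit is $\phi(0)$, which was never forced to equal $\grb_-$ even in the dominant case — it is the other fixed point $w$. The condition involving $\grb_-$ is the one at $\infty$.

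In short, the ``subtle point'' you flag at the end of your first paragraph — getting the $y$-twist exactly right so that the attracting-set geometry matches — is not a bookkeeping footnote but the entire content of the step, and the first half of your paragraph gets it wrong. The clean way to do it is exactly the paper's: replace $\phi$ by $\dot y^{-1}\cdot\phi$, which is $\TT_{\lambda'}$-invariant (since $\dot y^{-1}\lambda(c)\dot y=\lambda'(c)$), track how the evaluation point and the basing condition transform (following \cite[Theorem~5.2, before Lemma~5.3]{BFI}), and then invoke Corollary~\ref{zas} verbatim for $\lambda'$. Your closing paragraph, on passing from a bijection to an isomorphism via normality of Schubert varieties and reducedness of the fixed-point scheme, is fine and matches the paper's use of the Corollary~\ref{zas} argument.
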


\begin{proof} For $\phi\in(Z^\alpha)^{\TT_{\lam}}$ we consider $\dot y{}^{-1}\cdot\phi$
(for a representative $\dot y$ of $y$ in the normalizer of $T$),
following the proof of~\cite[Theorem~5.2 before~Lemma~5.3]{BFI}.
Now we repeat the argument in the proof of Corollary~\ref{zas}.
\end{proof}

\subsection{Singular dominant}\label{irreg dom}
Now let $\lambda$ be singular dominant, and let
$J:=\{j\in I: \langle\alpha^\svee_j,\lambda\rangle=0\}$.

\begin{Lemma}\label{psix}
If there is $w\in W^J$ such that $\lambda-w\lambda=\alpha$, then
$(\oZ^\alpha)^{\TT_{\lam}}\simeq\ ^J\CB^w_e$. Otherwise,
$(\oZ^\alpha)^{\TT_{\lam}}$ is empty.
\end{Lemma}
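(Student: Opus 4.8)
The plan is to adapt the argument of Lemma~\ref{phix} and Corollary~\ref{zas} to the singular dominant situation, where the flag variety $\CB$ must be replaced by the partial flag variety $^J\CB$. First I would take a point $z\in\ ^J\CB_e$, i.e.\ a point in the orbit of $\pi_J(\grb_-)$ under the free action of $^JU=\on{Rad}P_J$. As in the proof of Lemma~\ref{phix}, I would form the map $c\mapsto \lambda(c^{-1})\cdot z$ on $\CC^\times$; since $\lambda$ is dominant (though now only weakly), $\lambda(\CC^\times)$ still contracts $^J\CB_e$ onto $\pi_J(\grb_-)$, so this map extends uniquely to $\psi_z\colon\BP^1\to\ ^J\CB$ with $\psi_z(\infty)=\pi_J(\grb_-)$ and $\psi_z(0)$ a $T$-fixed point of $^J\CB$, labeled by some $w\in W^J$. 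The key new point is that because $\langle\alpha^\svee_j,\lambda\rangle=0$ for $j\in J$, a $\TT_{\lam}$-invariant based quasimap to $\CB$ actually has image lying in a single fiber $_J\CB$ of $\pi_J\colon\CB\to\ ^J\CB$ over a $\TT_{\lam}$-fixed constant section of $^J\CB$; more precisely, the composite with $\pi_J$ must be a $\TT_{\lam}$-invariant map $\BP^1\to\ ^J\CB$ sending $\infty$ to $\pi_J(\grb_-)$, and the remaining data along the fiber $_J\CB$ is itself contracted by $\lambda$ (which acts trivially on $_J\CB$ up to the center) — so it is forced to be constant, killing any degree in the $J$-directions. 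Hence the whole based map is recovered from its value at $1\in\BP^1$, which lies in $^J\CB_e$, exactly as in the non-partial case.

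Next I would run the degree computation. For $\psi=\psi_z$ the composed map $\pi_J\circ\psi$ has degree in $H_2({}^J\CB,\ZZ)$ equal to the image of $\lambda-w\lambda$; lifting to $\CB$ and using that the fiberwise part is constant, one gets $\deg\psi=\lambda-w\lambda\in\Lambda_+$, and this now lands in $\BN[I\setminus J]$-type positivity because $w\in W^J$ is the minimal-length representative. Conversely, given any $\TT_{\lam}$-invariant map $\phi\colon\BP^1\to\CB$ with $\phi(\infty)=\grb_-$, the argument of~\cite[Lemma~5.3]{BFI} (applied after pushing forward along $\pi_J$, and noting the fiber component is rigid) shows $\phi(0)\in W^J\subset\ ^J\CB$, and $\phi=\psi_z$ for $z=\phi(1)$, with $\phi(0)=w$ precisely when $z\in\ ^J\CB^w$. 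Intersecting with $^J\CB_e$ gives the identification $(\oZ^\alpha)^{\TT_{\lam}}\simeq\ ^J\CB^w_e$ when $\alpha=\lambda-w\lambda$ for some (necessarily unique) $w\in W^J$, and emptiness otherwise, which is the claim.

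The main obstacle I expect is the rigidity statement for the fiber directions: showing rigorously that a $\TT_{\lam}$-invariant based quasimap cannot carry any degree along the fibers of $\pi_J$, i.e.\ that the $J$-part of $\alpha$ vanishes. The clean way is to observe that $\TT_{\lam}$-invariance forces the quasimap to be, away from $0$, the map $c\mapsto\lambda(c^{-1})z$, and then $\lambda$ acting on $_J\CB$ factors through the (finite) center of $L_J$ on the semisimple part, so $c\mapsto\lambda(c^{-1})z$ is constant in the $_J\CB$-direction; defects can only appear at $0$, and a defect at $0$ contributing to the $J$-directions would be incompatible with the value of the map being forced. One should also be careful that $^J\CB$ may have torsion or that $\pi_1$ of the strata is trivial (needed downstream, not here), but for this Lemma the only real work is the fiberwise rigidity together with transcribing the~\cite{BFI} degree argument; the rest is a routine repetition of Corollary~\ref{zas}'s normality/reducedness bootstrap, which I would state in a subsequent corollary rather than here.
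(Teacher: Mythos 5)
Your overall route is the same as the paper's: rerun the proof of Lemma~\ref{phix}, with the new input that the attracting set of $\grb_-$ under $\lambda(\CC^\times)$ is now the smaller orbit ${}^JU\cdot\grb_-$, which $\pi_J$ carries isomorphically onto ${}^J\CB_e$, so that a $\TT_{\lam}$-invariant based map is still recovered from its value at $1\in\BP^1$. That identification, together with stratifying by where the point flows as $c\to0$, is exactly what the paper does.

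However, the step you single out as ``the only real work'' --- that a $\TT_{\lam}$-invariant based map carries no degree along the fibers of $\pi_J$, i.e.\ that the $J$-part of $\alpha=\lambda-w\lambda$ vanishes (your ``$\BN[I\setminus J]$-type positivity'') --- is false, so you would be stuck trying to prove it. Counterexample: $G=\SL(3)$, $J=\{1\}$, $\lambda=\alpha_1+2\alpha_2$ (so $\langle\alpha^\svee_1,\lambda\rangle=0$ and $\langle\alpha^\svee_2,\lambda\rangle=3$), $w=s_1s_2\in W^J$; then $\lambda-w\lambda=3(\alpha_1+\alpha_2)$ has a nonzero $\alpha_1$-coefficient with $1\in J$. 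Concretely, for $z=\exp(X)\cdot\grb_-$ with $X$ a root vector for the highest root $\alpha^\svee_1+\alpha^\svee_2$, the map $\phi_z$ is a $3$-fold cover of the $T$-curve joining $\grb_-$ to $s_1s_2s_1\cdot\grb_-$ and has degree $3(\alpha_1+\alpha_2)$. The same example shows that your picture of a rigid fiber component is off: the fixed locus $\CB^{\TT_{\lam}}$ is the union of the \emph{whole} fibers $\pi_J^{-1}(w)\cong{}_J\CB$, $w\in W^J$ (on which $\lambda(\CC^\times)$ acts trivially because $\lambda$ is central in $L_J$), and $\phi(0)$ genuinely varies over such a fiber as $z$ varies. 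What the argument actually needs, and what the paper uses, is: (i) the decomposition of ${}^JU\cdot\grb_-\cong{}^J\CB_e$ is by the repelling sets $\CR^w$ of the components $\pi_J^{-1}(w)$, and $\pi_J$ takes $\CR^w\cap({}^JU\cdot\grb_-)$ isomorphically onto ${}^J\CB{}^w_e$; (ii) the degree is computed as in \cite[Lemma~5.3]{BFI} by pairing with equivariant line bundles, whose $\TT_{\lam}$-weight is constant along the connected component $\pi_J^{-1}(w)$, so that $\deg\phi=\lambda-w\lambda$ no matter where in $\pi_J^{-1}(w)$ the point $\phi(0)$ lands. Substituting (i)--(ii) for your rigidity claim repairs the proof and recovers the paper's argument.
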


\begin{proof} We repeat the argument of the proof of Lemma \ref{phix}, and note that
the attractor of $\grb_-$ under the action of $\lambda(\CC^\times )$ is nothing but
the orbit of the radical $^JU\cdot \grb_-$. The projection $\pi_J$ takes this
orbit isomorphically onto $^J\CB_e$. The fixed point set $\CB^{\TT_{\lam}}$ is
nothing but the preimage $\pi_J^{-1}(W^J)$ of the fixed point set
$^J\CB^{\TT_{\lam}}=W^J$. For $w\in W^J$, we denote by $\CR^w$ the repellent of
the connected component $\pi_J^{-1}(w)$ under the action of $\lambda(\CC^\times )$.
Then $\pi_J$ takes $\CR^w\cap(\ ^JU\cdot \grb_-)$ isomorphically onto its
image $^J\CB^w_e$.
\end{proof}

The same argument as in the proof of Corollary~\ref{zas} establishes
\begin{Corollary}\label{zasr}
  There exists a morphism
\[
(Z^\alpha)^{\TT_{\lam}}\to\bigcup\limits_{w\in W^J: \lambda-w\lambda\leq\alpha}\
^J\overline\CB{}^w_e
\]
(the latter is understood
as a locally closed subvariety of $^J\calB$) which is bijective on $\CC$-points.
Moreover, assume that $\alp$ is such that the subvariety
$\bigcup\limits_{w\in W^J: \lambda-w\lambda\leq\alpha}\
^J\overline\CB{}^w_e$ of $^J\calB$ is
irreducible (in particular, this is true when $\alp$ is sufficiently large).
Then the above morphism is an isomorphism.
%$(Z^\alpha)^{\TT_{\lam}}\simeq\bigcup\limits_{w\in W^J: \lambda-w\lambda\leq\alpha}\
%^J\overline\CB{}^w_e$ in case the latter is irreducible.
% \begin{NB}
%   Don't we need a bijective statement as in Corollary~\ref{zas} ?
%   \begin{NB2}
%     Done.
%     \end{NB2}%
% \end{NB}%
\end{Corollary}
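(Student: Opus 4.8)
The plan is to repeat, essentially verbatim, the argument proving Corollary~\ref{zas}, substituting Lemma~\ref{psix} for Lemma~\ref{phix}. First I would record that a $\TT_{\lam}$-fixed based quasi-map $\phi$ of degree $\alpha$ can carry a defect only at $0\in\BP^1$: its defect is an effective $\Lambda_+$-valued divisor on $\BP^1\setminus\{\infty\}$ that must be invariant under the scaling action of $\CC^\times$, hence equals $\gamma\cdot[0]$ for some $0\le\gamma\le\alpha$. Consequently the underlying honest map $\bar\phi$ of $\phi$, regular away from $0$, extends to a map $\BP^1\to\CB$ of degree $\alpha-\gamma$, and is in particular regular at $1\in\BP^1$; composing evaluation at $1$ with the projection $\pi_J\colon\CB\to{}^J\CB$ then gives a morphism $p\colon(Z^\alpha)^{\TT_{\lam}}\to{}^J\CB$, $\phi\mapsto\pi_J(\phi(1))$. (As in Corollary~\ref{zas}, one takes the reduced scheme structure on $(Z^\alpha)^{\TT_{\lam}}$ and uses the universal quasi-map over it, so that $p$ is a morphism of schemes and not just a map on $\CC$-points.)

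Next I would identify the image and prove bijectivity on $\CC$-points using Lemma~\ref{psix}. Decomposing $\phi$ into its defect $\gamma\cdot[0]$ and its honest $\TT_{\lam}$-invariant based map $\bar\phi$ of degree $\alpha-\gamma$, Lemma~\ref{psix} tells us such a $\bar\phi$ exists only if $\alpha-\gamma=\lambda-w\lambda$ for some $w\in W^J$, in which case $\bar\phi$ runs over $(\oZ^{\alpha-\gamma})^{\TT_{\lam}}\iso{}^J\CB^w_e$, the isomorphism being $\bar\phi\mapsto\pi_J(\bar\phi(1))$. Since $\bar\phi(1)=\phi(1)$, the image of $p$ is exactly $\bigcup_{w\in W^J:\ \lambda-w\lambda\le\alpha}{}^J\overline\CB{}^w_e$, as claimed. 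For injectivity: $\pi_J(\phi(1))$ lies in the open cell ${}^J\CB_e$, hence lifts uniquely to $z:=\phi(1)$ in the attractor ${}^JU\cdot\grb_-$ of $\grb_-$ (on which $\pi_J$ restricts to an isomorphism onto ${}^J\CB_e$, as recalled inside the proof of Lemma~\ref{psix}); this $z$ recovers $\bar\phi=\phi_z$ just as in the proof of Lemma~\ref{phix}, the cell ${}^J\CB^w_e$ through $\pi_J(z)$ recovers $w$, hence the degree $\lambda-w\lambda$ of $\bar\phi$, hence $\gamma$, hence $\phi$. Surjectivity is the same bookkeeping run in reverse: lift a point of ${}^J\CB^w_e$ to $z\in{}^JU\cdot\grb_-$, form $\phi_z$ of degree $\lambda-w\lambda$, and glue on the defect $(\alpha-\lambda+w\lambda)\cdot[0]$.

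For the isomorphism assertion I would argue exactly as in Corollary~\ref{zas}. Since $w\mapsto\lambda-w\lambda$ is order-preserving from the Bruhat order on $W^J$ to the order $\le$ on $\Lambda$ (standard for $\lambda$ dominant), the index set $\{w\in W^J:\lambda-w\lambda\le\alpha\}$ is a lower set (an order ideal); if the union $\bigcup_{w}{}^J\overline\CB{}^w_e$ is irreducible this lower set has a unique maximal element $y$, so the union equals ${}^J\overline\CB{}^y\cap{}^J\CB_e$, an open subvariety of the Schubert variety ${}^J\overline\CB{}^y$. Then, exactly as in Corollary~\ref{zas}, the normality of ${}^J\overline\CB{}^y$ (Schubert varieties in partial flag varieties are normal) together with the reducedness of $(Z^\alpha)^{\TT_{\lam}}$ upgrades the bijective morphism $p$ to an isomorphism, via Zariski's main theorem.

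The step I expect to be the genuine obstacle --- and precisely the reason the isomorphism is asserted only under the irreducibility hypothesis (compare the footnote to Corollary~\ref{zas}) --- is passing from the bijection on $\CC$-points to an isomorphism of schemes for \emph{arbitrary} $\alpha$: once the target union of Schubert cells is reducible one loses normality, and an independent argument would be needed showing that $(Z^\alpha)^{\TT_{\lam}}$ is reduced with the same singularities as that union. A lesser but genuine technical point, already implicit in Corollary~\ref{zas}, is verifying that $p$ is a morphism of schemes at all --- i.e.\ that the defect of the universal $\TT_{\lam}$-fixed quasi-map stays concentrated at $0$ in families, so that evaluation at $1$ is defined over the parameter space.
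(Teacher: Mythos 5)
Your proposal is correct and follows exactly the route the paper takes: the paper proves Corollary~\ref{zasr} by declaring it "the same argument as in the proof of Corollary~\ref{zas}," i.e.\ evaluation at $1\in\BP^1$ (composed with $\pi_J$), identification of the image via Lemma~\ref{psix}, and then normality of the Schubert variety plus reducedness of the fixed-point scheme in the irreducible case. Your write-up simply makes explicit the bookkeeping (defect concentrated at $0$, recovery of $w$ and $\gamma$) that the paper leaves implicit.
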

%-----------------------------------------------------------------------------------
\subsection{Singular}\label{irreg}
Now let $\lambda$ be singular of the form
$y\lambda'$ where $\lambda'$ is (singular) dominant. The coweight $\lambda'$
defines a subset $J\subset I$ as in~\S\ref{irreg dom}, and the class of $y$
in $W^J$ is well defined. The same argument as in the proof of Proposition \ref{reg zas}
establishes
\begin{Proposition}\label{irreg zas}
  There exists a morphism
\[
(Z^\alpha)^{\TT_{\lam}}\to\bigcup_{w\in W^J: \lambda'-w\lambda'\leq\alpha}\
 ^J\overline\CB{}^w_y
\]
(the latter is understood
as a locally closed subvariety of $^J\calB$) which is bijective on $\CC$-points.
Moreover, assume that $\alp$ is such that the subvariety
$\bigcup_{w\in W^J: \lambda'-w\lambda'\leq\alpha}\
 ^J\overline\CB{}^w_y$ of $^J\calB$ is
irreducible (in particular, this is true when $\alp$ is sufficiently large).
Then the above morphism is an isomorphism.
% $(Z^\alpha)^{\TT_{\lam}}\simeq\bigcup_{w\in W^J: \lambda'-w\lambda'\leq\alpha}\
% ^J\overline\CB{}^w_y$.
 % \begin{NB}
 %   Is this a bijective statement ?
 %   \begin{NB2}
 %     Done.
 %     \end{NB2}%
 % \end{NB}%
\end{Proposition}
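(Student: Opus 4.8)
The plan is to deduce this by combining two reductions already in place: the twisting technique from the proof of Proposition~\ref{reg zas}, which passes from a parameter of the form $\lambda=y\lambda'$ to the dominant parameter $\lambda'$, and the analysis of the projection $\pi_J\colon\calB\to{}^J\calB$ used in Lemma~\ref{psix} and Corollary~\ref{zasr}, which accounts for the singularity of $\lambda'$ through the parabolic $P_J$. No genuinely new idea is needed beyond making these two reductions compatible.

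First I would construct the morphism. A $\TT_{\lam}$-invariant based quasi-map $\phi$ can acquire a defect only at $0\in\BP^1$, since $\TT_{\lam}$ acts on $\BP^1$ through its $\CC^\times$-factor with the two fixed points $0,\infty$, and there is no defect at $\infty$ by definition of $Z^\alpha$. Hence evaluation at $1\in\BP^1$ gives a well-defined morphism $p\colon(Z^\alpha)^{\TT_{\lam}}\to\calB$, and $\bar p:=\pi_J\circ p$ is the candidate morphism $(Z^\alpha)^{\TT_{\lam}}\to{}^J\calB$; note that $\bar p$ manifestly depends only on the class of $y$ in $W^J$, since its definition involves no choice of representative of $y$ at all, and the target union $\bigcup_{w\in W^J}{}^J\overline\CB{}^w_y$ depends only on $\pi_J(y)$ as well.

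To identify the image of $\bar p$ and prove bijectivity on $\CC$-points, I would conjugate: for $\phi\in(Z^\alpha)^{\TT_{\lam}}$ pass to $\dot y^{-1}\cdot\phi$ for a representative $\dot y$ of $y$ in the normalizer of $T$, exactly as in the proof of Proposition~\ref{reg zas}, following \cite[Theorem~5.2]{BFI}. One checks that $\dot y^{-1}\cdot\phi$ is $\TT_{\lambda'}$-invariant, still of degree $\alpha$ (the degree, being a class in $H_2(\calB,\ZZ)$, is unchanged by the action of the connected group $G$), and based now at the $T$-fixed point $y^{-1}$ instead of $\grb_-$. This places us in the singular dominant situation of Lemma~\ref{psix}: running its argument in this twisted form --- using that the attractor of $\grb_-$ under $\lambda'(\CC^\times)$ is the orbit ${}^JU\cdot\grb_-$, that $\pi_J$ carries it isomorphically onto ${}^J\calB_e$, and that a $\TT_{\lambda'}$-fixed map sends $0$ to a point of $W^J$ after composing with $\pi_J$ --- one obtains that $\bar p$ defines a bijection onto $\bigcup_{w\in W^J:\ \lambda'-w\lambda'\le\alpha}{}^J\overline\CB{}^w_y$, the degree constraint arising, as in Lemma~\ref{phix}, from the identity $\deg\phi=\lambda'-w\lambda'$ for the component $w\in W^J$ of the twisted $\phi(0)$. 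The occurrence of $W^J$ rather than all of $W$, and of ${}^J\overline\CB{}^w_y$ rather than $\overline\CB{}^w_y$, is precisely the effect of the $\pi_J$-projection already isolated in Lemma~\ref{psix}.

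For the last assertion, suppose $\bigcup_{w\in W^J:\ \lambda'-w\lambda'\le\alpha}{}^J\overline\CB{}^w_y$ is irreducible; then it is open in a Schubert variety ${}^J\overline\CB{}^z$ in the partial flag variety ${}^J\calB$ for some $z\in W$, and Schubert varieties in $G/P_J$ are normal. Since $\bar p$ is a bijective morphism onto this locally closed subvariety, is already an isomorphism over its open dense cell (the twisted form of Lemma~\ref{psix} applied on $(\oZ^\alpha)^{\TT_{\lam}}$), and $(Z^\alpha)^{\TT_{\lam}}$ is reduced, the morphism $\bar p$ is an isomorphism --- this is exactly the argument in the proof of Corollary~\ref{zas}; for $\alpha$ sufficiently large the inequality $\lambda'-w\lambda'\le\alpha$ holds for all $w\in W^J$, so the union is the whole affine cell ${}^J\calB_y$ and in particular irreducible. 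The step that requires the most care is the bookkeeping in the conjugation: verifying that $\dot y^{-1}\cdot\phi$ is indeed $\TT_{\lambda'}$-invariant of unchanged degree with base point the fixed point $y^{-1}$, and that this twist interacts correctly with $\pi_J$ so that the resulting index set is $W^J$ rather than all of $W$; the rest is a direct recycling of Lemmas~\ref{phix} and~\ref{psix} and of Corollary~\ref{zas}.
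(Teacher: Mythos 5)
Your proposal is correct and follows exactly the route the paper takes: the paper's own proof of Proposition~\ref{irreg zas} is just the instruction to combine the conjugation by $\dot y^{-1}$ from Proposition~\ref{reg zas} with the parabolic ($\pi_J$) analysis of Lemma~\ref{psix} and Corollary~\ref{zasr}, and then to rerun the normality/reducedness argument of Corollary~\ref{zas} --- which is precisely what you spell out, including the correct identification of the degree via $\deg\phi=\lambda'-w\lambda'$ and of the index set as $W^J$.
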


\subsection{Rational case}\label{rat}
Now suppose that our 1-parametric subgroup in $\TT$ projects onto
$\CC^\times $ not bijectively, but with degree $n$. We choose a coordinate on this
subtorus, and view its projection into $T$ as a cocharacter $\mu\in \Lambda$.
We define $\lambda:=\mu/n\in \Lambda\otimes\BQ$, and continue to call our
1-parametric subtorus $\TT_{\lam}$. We choose a primitive $\sqrt[n]{1}$, and
set $\zeta:=\mu(\sqrt[n]{1})$. The centralizer $G_\zeta\subset G$ of
$\zeta\in T\subset G$ is connected reductive since $G$ is simply-connected
(a Borel-Siebenthal subgroup); $T$ is a maximal torus of $G_\zeta$, and
the corresponding Weyl group $_\zeta W$ coincides with the subgroup $_\lambda W$ of $W$
formed by all $w\in W$ such that $\lambda-w\lambda\in\Lambda$.

\medskip
\noindent
{\bf Remark.} In general such a subgroup needs not be a Coxeter group, but for
simply connected $G$ it is. The set of its Coxeter generators $I_\zeta$ is
numbered by the indecomposable elements of
$R^{\vee+}_\zeta:=\{\alpha^\svee\in R^{\vee+}:\
\langle\alpha^\svee,\lambda\rangle\in\BZ\}$.

The flag variety $_\zeta\CB$ of $G_\zeta$ embeds into $\CB$, and we keep the notation
of~\S\ref{nota} for $_\zeta\CB$ like $_\zeta\overline\CB{}^w_y$ etc.

\subsection{Rational regular dominant}\label{rat reg dom} Assume that $\lambda$ is
regular dominant (in the sense that
$\langle\alpha^\svee,\lambda\rangle\not\in-\BN$ for any
$\alpha^\svee\in R^{\vee+}$).

\begin{Lemma}\label{r phix} If there is $w\in {}_\zeta W$ such that $\lambda-w\lambda=\alpha$,
then $(\oZ^\alpha)^{\TT_{\lam}}\simeq {}_\zeta\CB^w_e$. Otherwise,
$(\oZ^\alpha)^{\TT_{\lam}}$ is empty.
\end{Lemma}

\begin{proof}
For $z\in\CB_e=U\cdot \grb_-$ we consider a map
$\CC^\times \to\CB,\ c\mapsto\mu(c^{-1})\cdot z$. This map factors through the $n$-fold
covering $\CC^\times \to\CC^\times ,\ c\mapsto c^n$ if and only if $z\in (U\cap G_\zeta)\cdot \grb_-=\
_\zeta\CB_e$. If it does, we denote the resulting map from the base of the
$n$-fold covering by $\CC^\times \to\CB\colon c\mapsto\lambda(c^{-1})\cdot z$. Now we
continue the same way as in the proof of Lemma \ref{phix}.
\end{proof}

The same argument as in the proof of Corollary \ref{zas} establishes
\begin{Corollary}\label{rat zas}
  There exists a morphism
\[
(Z^\alpha)^{\TT_{\lam}}\to\bigcup\limits_{w\in\, _\zeta W: \lambda-w\lambda\leq\alpha}\
_\zeta\overline\CB{}^w_e
\]
(the latter is understood
as a locally closed subvariety of $_\zeta\calB$) which is bijective on $\CC$-points.
Moreover, assume that $\alp$ is such that the subvariety
$\bigcup\limits_{w\in\, _\zeta W: \lambda-w\lambda\leq\alpha}\
_\zeta\overline\CB{}^w_e$ of $_\zeta\calB$ is
irreducible (in particular, this is true when $\alp$ is sufficiently large).
Then the above morphism is an isomorphism.
%$(Z^\alpha)^{\TT_{\lam}}\simeq\bigcup\limits_{w\in\, _\zeta W: \lambda-w\lambda\leq\alpha}\
%_\zeta\overline\CB{}^w_e$.
 % \begin{NB}
 %   Is this a bijective statement ?
 %   \begin{NB2}
 %     Done.
 %     \end{NB2}%
 % \end{NB}%
\end{Corollary}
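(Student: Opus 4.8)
The plan is to follow the proof of Corollary~\ref{zas} essentially word for word, the only substitution being that the description of $\TT_{\lam}$-fixed honest maps is now supplied by Lemma~\ref{r phix} in place of Lemma~\ref{phix}. First I would observe, exactly as there, that a $\TT_{\lam}$-invariant based quasimap $\phi\in(Z^\alpha)^{\TT_{\lam}}$ can carry defect only over $0\in\BP^1$: the $\CC^\times$-factor of $\TT_{\lam}$ fixes only $0$ and $\infty$ on $\BP^1$, the based condition forbids defect at $\infty$, and the action is free on $\BP^1\setminus\{0,\infty\}$. Hence evaluation at $1\in\BP^1$ is a well-defined morphism $p\colon(Z^\alpha)^{\TT_{\lam}}\to\CB$, which by the computation inside the proof of Lemma~\ref{r phix} actually lands in ${}_\zeta\CB_e=(U\cap G_\zeta)\cdot\grb_-$.

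Next I would identify the image of $p$ and check that $p$ is a bijection on $\CC$-points. For $\phi\in(Z^\alpha)^{\TT_{\lam}}$ the underlying honest map of the quasimap $\phi$ is a $\TT_{\lam}$-invariant based map of some degree $\beta\leq\alpha$, hence a point of $(\oZ^\beta)^{\TT_{\lam}}$; by Lemma~\ref{r phix} this forces $\beta=\lambda-w\lambda$ for a unique $w\in{}_\zeta W$ and exhibits that underlying map as the map $\phi_z$ attached to $z=\phi(1)\in{}_\zeta\CB^w_e$. The defect of $\phi$ is then concentrated at $0$ of degree $\alpha-(\lambda-w\lambda)\in\Lambda_+$, so $\lambda-w\lambda\leq\alpha$; conversely, each such $w$ together with any $z\in{}_\zeta\CB^w_e$ and the defect of degree $\alpha-(\lambda-w\lambda)$ at $0$ produce a point of $(Z^\alpha)^{\TT_{\lam}}$ mapping to $z$ under $p$, and $\phi$ is recovered uniquely from $z$ once $\alpha$ is fixed. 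Hence $p$ is a bijection of $\CC$-points onto $\bigcup_{w\in{}_\zeta W:\ \lambda-w\lambda\leq\alpha}{}_\zeta\overline\CB{}^w_e$, the union of locally closed strata agreeing with the union of their closures because, for $\lambda$ dominant, $w\mapsto w\lambda$ reverses the Bruhat order and hence the indexing set is Bruhat-downward closed.

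Finally, under the irreducibility hypothesis I would upgrade this bijection to an isomorphism exactly as in Corollary~\ref{zas}: an irreducible such union is open in a Schubert variety ${}_\zeta\overline\CB{}^y$ of ${}_\zeta\CB$ for some $y\in{}_\zeta W$, this Schubert variety is normal, $(Z^\alpha)^{\TT_{\lam}}$ is reduced by our standing convention, and a bijective morphism from a reduced scheme onto an irreducible normal variety over $\CC$ is an isomorphism. The step I expect to be the genuine obstacle --- and the reason the irreducibility assumption is imposed, just as for Corollary~\ref{zas} and its footnote --- is exactly this last one: for arbitrary $\alpha$ the target union need not be irreducible or normal, and then a bijective morphism from a reduced source need not be an isomorphism, so a different argument would be needed to cover all $\alpha$. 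Everything preceding this point, including the bijection on $\CC$-points, holds for every $\alpha$, and the only ingredient particular to the rational situation --- that $c\mapsto\mu(c^{-1})z$ factors through the $n$-th power map precisely when $z\in{}_\zeta\CB_e$ --- is already contained in Lemma~\ref{r phix}.
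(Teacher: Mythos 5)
Your proof is correct and takes essentially the same approach as the paper, which simply states that the argument of Corollary~\ref{zas} goes through verbatim with Lemma~\ref{r phix} substituted for Lemma~\ref{phix}. Your elaboration of the individual steps (defect only at $0$, evaluation at $1$ landing in ${}_\zeta\CB_e$, bijection on $\CC$-points, and the normality-plus-reducedness argument under the irreducibility hypothesis) accurately reconstructs what the paper leaves implicit.
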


\subsection{Rational regular}\label{rat reg} Now let $\lambda$ be rational regular of the
form $y\lambda'$ where $\lambda'$ is regular dominant in the sense
of~\S\ref{rat reg dom}, and $y\in {}_\zeta W$. The standard by now argument
establishes
\begin{Proposition}\label{reg rat zas}
There exists a morphism
\[
(Z^\alpha)^{\TT_{\lam}}\to\bigcup_{w\in {}_\zeta W:\
\lambda'-w\lambda'\leq\alpha} {}_\zeta\overline\CB{}^w_y
\]
(the latter is understood
as a locally closed subvariety of $_\zeta\calB$) which is bijective on $\CC$-points.
Moreover, assume that $\alp$ is such that the subvariety
$\bigcup_{w\in {}_\zeta W:\
\lambda'-w\lambda'\leq\alpha} {}_\zeta\overline\CB{}^w_y$ of $_\zeta\calB$ is
irreducible (in particular, this is true when $\alp$ is sufficiently large).
Then the above morphism is an isomorphism.
%$(Z^\alpha)^{\TT_{\lam}}\simeq\bigcup_{w\in {}_\zeta W:\
%\lambda'-w\lambda'\leq\alpha} {}_\zeta\overline\CB{}^w_y$ assuming that the RHS is irreducible.
In particular, $Z^{\TT_{\lam}}$ is isomorphic to $_\zeta \calB_y$.
% \begin{NB}
%   Don't we need a bijective statement as in Coroolary~\ref{zas} ?
%   \begin{NB2}
%     Done.
%     \end{NB2}%
% \end{NB}%
\end{Proposition}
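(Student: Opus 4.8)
The plan is to follow the template already used for Proposition~\ref{reg zas} and Corollary~\ref{rat zas}, combining their two devices: the $\dot y{}^{-1}$-twist that reduces the non-dominant case to the dominant one, and the $n$-fold covering trick of Lemma~\ref{r phix} that reduces the rational case to the integral one. Concretely, given a $\TT_{\lam}$-fixed based quasi-map $\phi$ of degree $\alpha$, I would first choose a representative $\dot y\in N_G(T)$ of $y$ and replace $\phi$ by $\dot y{}^{-1}\cdot\phi$. Since conjugation by $\dot y{}^{-1}$ carries $\TT_{\lam}$ to $\TT_{\lam'}$ (because $y^{-1}\lambda=\lambda'$) and left multiplication by $\dot y{}^{-1}\in G$ acts trivially on $H_2(\CB,\Z)$, the twisted object is a $\TT_{\lam'}$-fixed quasi-map of the same degree $\alpha$, now based at the $T$-fixed point $\dot y{}^{-1}\grb_-$ in the $W$-orbit of $\grb_-$ rather than at $\grb_-$ itself; this relabelling is exactly what turns the subscript $e$ into $y$ in $_\zeta\overline\CB{}^w_y$, precisely as in Proposition~\ref{reg zas}.

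Next I would run the proof of Lemma~\ref{r phix} on the honest-maps locus: a $\TT_{\lam}$-fixed based map $\phi$ has the form $c\mapsto\mu(c^{-1})\cdot z$ with $z=\phi(1)$, and this factors through the $n$-fold cover $c\mapsto c^n$ exactly when $z\in(U\cap G_\zeta)\cdot\grb_-={}_\zeta\CB_e$; when it does, the induced map $c\mapsto\lambda(c^{-1})\cdot z$ is analysed as in Lemma~\ref{phix}, and the argument of~\cite[Lemma~5.3]{BFI} identifies its degree with $\lambda'-w\lambda'$, where $w=\phi(0)\in{}_\zeta W$. Undoing the twist then gives $(\oZ^\alpha)^{\TT_{\lam}}\cong{}_\zeta\CB^w_y$ when $\lambda'-w\lambda'=\alpha$ for some (necessarily unique) $w\in{}_\zeta W$, and $(\oZ^\alpha)^{\TT_{\lam}}$ empty otherwise.

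To pass from maps to quasi-maps and obtain the stated morphism, I would repeat verbatim the argument of Corollary~\ref{zas} (equivalently Corollary~\ref{rat zas}): a $\TT_{\lam}$-invariant based quasi-map can have defect only at the unique $\TT_{\lam}$-fixed point $0\in\BP^1\setminus\{\infty\}$, so evaluation at $1$ defines a morphism $p\colon(Z^\alpha)^{\TT_{\lam}}\to\CB$; by the maps analysis its image is $\bigcup_{w\in{}_\zeta W:\ \lambda'-w\lambda'\leq\alpha}{}_\zeta\overline\CB{}^w_y$ and $p$ is bijective onto it on $\CC$-points, the underlying map being determined and the degree deficiency $\alpha-(\lambda'-w\lambda')$ prescribing an otherwise unconstrained defect at $0$. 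When $\alpha$ is large the image is irreducible, hence open in a Schubert variety inside $_\zeta\CB$, and normality of Schubert varieties together with reducedness of $(Z^\alpha)^{\TT_{\lam}}$ upgrades $p$ to an isomorphism. Finally, passing to the limit over $\alpha$, the condition $\lambda'-w\lambda'\leq\alpha$ eventually holds for all $w\in{}_\zeta W$ — in particular for the longest element of $_\zeta W$, whose Schubert variety exhausts $_\zeta\CB$ — so $Z^{\TT_{\lam}}\cong{}_\zeta\CB_y={}_\zeta\calB_y$.

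The main obstacle is the maps-to-quasi-maps step, i.e.\ checking that $p$ is bijective on $\CC$-points: one must verify that every $\TT_{\lam}$-fixed quasi-map of degree $\alpha$ decomposes uniquely as an honest $\TT_{\lam}$-fixed map of some degree $\beta=\lambda'-w\lambda'\leq\alpha$ (classified above and landing in $_\zeta\overline\CB{}^w_y$) together with a defect of degree $\alpha-\beta$ supported at $0$ which is subject to no further constraint. Everything else — the degree bookkeeping through the $n$-fold cover and the $\dot y{}^{-1}$-twist, and the normality input needed for the isomorphism claim — is routine once this point is settled.
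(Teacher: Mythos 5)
Your proposal is correct and follows exactly the route the paper intends by its phrase ``the standard by now argument establishes'': you combine the $\dot y^{-1}$-twist of Proposition~\ref{reg zas}, the $n$-fold covering reduction of Lemma~\ref{r phix}, and the quasi-maps/normality argument of Corollary~\ref{zas} in the right order, and your observation that left translation by $\dot y^{-1}$ preserves degree (trivial action on $H_2(\calB,\Z)$) supplies the small bookkeeping point the paper leaves implicit. The one step you flag as the ``main obstacle'' — that a $\TT_\lambda$-fixed quasi-map of degree $\alpha$ is an honest fixed map of degree $\lambda'-w\lambda'\leq\alpha$ plus an unconstrained defect at $0$ — is already handled in the proof of Corollary~\ref{zas} (defect can only sit at the isolated $\TT_\lambda$-fixed point $0\in\AA^1$, and evaluation at $1$ is then a bijection), so it is not actually a gap.
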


\subsection{Rational singular dominant}\label{rat sing dom} Assume $\lambda$ is rational
singular dominant, and let $J_\zeta:=\{j\in I_\zeta: \langle\alpha^\svee_j,
\lambda\rangle=0\}$. The same argument as in the proof of Lemma \ref{psix}
and Corollary \ref{zasr} establishes
%-----------------------------------------------------------------------------------------
\begin{Corollary}\label{zasrat}
  There exists a morphism
\[
(Z^\alpha)^{\TT_{\lam}}\to\bigcup_{w\in {}_\zeta W^{J_\zeta}: \lambda-w\lambda\leq\alpha}\
_{\hphantom{J}\zeta}^{J_\zeta}\overline\CB{}^w_e
\]
(the latter is understood
as a locally closed subvariety of $_\zeta^{J_\zeta}\calB$) which is bijective on $\CC$-points.
Moreover, assume that $\alp$ is such that the subvariety
$\bigcup_{w\in {}_\zeta W^{J_\zeta}: \lambda-w\lambda\leq\alpha}\
_{\hphantom{J}\zeta}^{J_\zeta}\overline\CB{}^w_e$ of $_\zeta^{J_\zeta}\calB$ is
irreducible (in particular, this is true when $\alp$ is sufficiently large).
Then the above morphism is an isomorphism.
%$(Z^\alpha)^{\TT_{\lam}}\simeq\bigcup_{w\in {}_\zeta W^{J_\zeta}: \lambda-w\lambda\leq\alpha}\
%_{\hphantom{J}\zeta}^{J_\zeta}\overline\CB{}^w_e$.
\end{Corollary}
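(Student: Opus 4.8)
The plan is to transcribe, inside the endoscopic subgroup $G_\zeta$, the two–step argument that derived \corref{zasr} from \lemref{psix}, after first reducing from $\mu$ to $\lambda=\mu/n$ by the $n$-fold covering device used in the proof of \lemref{r phix}.

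First I would establish the $G_\zeta$-version of \lemref{psix}: if there is $w\in{}_\zeta W^{J_\zeta}$ with $\lambda-w\lambda=\alpha$, then $(\oZ^\alpha)^{\TT_{\lam}}\simeq{}_\zeta^{J_\zeta}\CB^w_e$, and otherwise $(\oZ^\alpha)^{\TT_{\lam}}$ is empty. Starting from $z\in\CB_e=U\cdot\grb_-$, the map $c\mapsto\mu(c^{-1})\cdot z$ factors through $c\mapsto c^n$ exactly when $z\in(U\cap G_\zeta)\cdot\grb_-={}_\zeta\CB_e$, which produces a map $c\mapsto\lambda(c^{-1})\cdot z$ into ${}_\zeta\CB\subset\CB$, precisely as in the proof of \lemref{r phix}. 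Since $\lambda$ is only \emph{singular} dominant, I would then run the geometry of \lemref{psix} inside $G_\zeta$: the attractor of $\grb_-$ under $\lambda(\CC^\times)$ in ${}_\zeta\CB$ is the orbit ${}_\zeta^{J_\zeta}U\cdot\grb_-$ of the unipotent radical of the parabolic $P_{J_\zeta}\subset G_\zeta$, which ${}_\zeta\pi_{J_\zeta}$ maps isomorphically onto ${}_\zeta^{J_\zeta}\CB_e$; the $\lambda(\CC^\times)$-fixed locus of ${}_\zeta\CB$ is ${}_\zeta\pi_{J_\zeta}^{-1}({}_\zeta W^{J_\zeta})$; and for $w\in{}_\zeta W^{J_\zeta}$ the repellent of the component ${}_\zeta\pi_{J_\zeta}^{-1}(w)$ meets ${}_\zeta^{J_\zeta}U\cdot\grb_-$ in a subset carried isomorphically by ${}_\zeta\pi_{J_\zeta}$ onto ${}_\zeta^{J_\zeta}\CB^w_e$. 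The corresponding $\TT_{\lam}$-invariant map $\phi_z\colon\PP^1\to\CB$ then sends $\infty$ to $\grb_-$ and $0$ to $w$, with $\deg\phi_z=\lambda-w\lambda\in\Lambda_+$ by the argument in the proof of \cite[Lemma~5.3]{BFI} applied to $G_\zeta$ (using that the coroot lattice of $G_\zeta$ embeds in $\Lambda$). This yields the desired description of $(\oZ^\alpha)^{\TT_{\lam}}$.

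Next I would pass from honest maps to quasimaps exactly as in the proof of \corref{zas}. A $\TT_{\lam}$-invariant based quasimap of degree $\alpha$ can acquire a defect only at $0\in\PP^1$, and that defect must equal $(\alpha-\beta)\cdot[0]$ where $\beta\le\alpha$ is the degree of the underlying honest map; hence evaluation at $1\in\PP^1$ followed by ${}_\zeta\pi_{J_\zeta}$ gives a morphism $p\colon(Z^\alpha)^{\TT_{\lam}}\to{}_\zeta^{J_\zeta}\CB$, and the previous step identifies its image with $\bigcup_{w\in{}_\zeta W^{J_\zeta}:\ \lambda-w\lambda\le\alpha}{}_\zeta^{J_\zeta}\overline\CB{}^w_e$ and shows that $p$ is bijective on $\CC$-points (here one uses that $\{w:\lambda-w\lambda\le\alpha\}$ is upward closed in the Bruhat order, so the union of closed cells is the disjoint union of locally closed ones). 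When this union is irreducible it is open in a Schubert variety ${}_\zeta^{J_\zeta}\overline\CB{}^y$ inside the partial flag variety ${}_\zeta^{J_\zeta}\CB$; since such Schubert varieties are normal and $(Z^\alpha)^{\TT_{\lam}}$ is taken reduced, the bijective morphism $p$ is automatically an isomorphism, completing the argument exactly as in \corref{zas} and \corref{zasr}.

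The only genuinely new ingredient, and hence the main obstacle, is the first step: one must verify that the endoscopic data fit together, namely that the rational singular dominant $\lambda$ really does cut out the parabolic $P_{J_\zeta}$ with $J_\zeta=\{j\in I_\zeta:\langle\alpha_j^\svee,\lambda\rangle=0\}$, that the attractor/repellent decomposition of ${}_\zeta\CB$ with respect to $\lambda(\CC^\times)$ is exactly the parabolic Bruhat stratification of ${}_\zeta^{J_\zeta}\CB$, and that the degree computation of \cite[Lemma~5.3]{BFI} carries over both through the $n$-fold base change and through the replacement of $G$ by $G_\zeta$. Once these points are checked, the remainder is a mechanical adaptation of the proofs of \lemref{psix}, \corref{zasr} and \corref{zas}.
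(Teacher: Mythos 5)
Your proposal is correct and follows essentially the same route as the paper, which disposes of this corollary with the one-line remark that the same argument as in the proofs of Lemma~\ref{psix} and Corollary~\ref{zasr} establishes it — i.e.\ first the $n$-fold covering trick of Lemma~\ref{r phix} reduces to the endoscopic group $G_\zeta$, then the parabolic attractor/repellent analysis of Lemma~\ref{psix} runs inside $G_\zeta$, and finally the passage from $\oZ$ to $Z$ (evaluation at $1\in\PP^1$, bijectivity, normality of the Schubert variety plus reducedness) proceeds as in Corollary~\ref{zas}. One small slip: in your aside explaining why the union of closed cells equals the disjoint union of locally closed ones, the set $\{w:\lambda-w\lambda\le\alpha\}$ is \emph{downward} closed in the Bruhat order (for dominant $\lambda$, $w'\le w$ implies $\lambda-w'\lambda\le\lambda-w\lambda$), not upward closed; the conclusion you draw is of course exactly what downward closure gives, since $\overline{\CB}{}^w_e=\bigsqcup_{v\le w}\CB^v_e$.
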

%--------------------------------------------------------------------------------------------------------
\subsection{Rational}\label{rati}
Finally, let $\lambda$ be rational singular of the form $y\lambda'$ where
$\lambda'$ is (singular) dominant. The coweight $\lambda'$ defines a subset
$J_\zeta\subset I_\zeta$ as in~\S\ref{rat sing dom}, and the class of $y$
in $_\zeta W^{J_\zeta}$ is well defined. The standard by now argument establishes
%--------------------------------------------------------------------------------------------------
\begin{Proposition}\label{rat sing}
There exists a morphism
\[
(Z^\alpha)^{\TT_{\lam}}\to\bigcup\limits_{w\in {}_\zeta W^{J_\zeta}: \lambda'-w\lambda'\leq\alpha}\
_{\hphantom{J}\zeta}^{J_\zeta}\overline\CB{}^w_y
\]
(the latter is understood
as a locally closed subvariety of $_\zeta^{J_\zeta}\calB$) which is bijective on $\CC$-points.
Moreover, assume that $\alp$ is such that the subvariety
$\bigcup\limits_{w\in {}_\zeta W^{J_\zeta}: \lambda'-w\lambda'\leq\alpha}\
_{\hphantom{J}\zeta}^{J_\zeta}\overline\CB{}^w_y$ of $_\zeta^{J_\zeta}\calB$ is
irreducible (in particular, this is true when $\alp$ is sufficiently large).
Then the above morphism is an isomorphism.
%  $(Z^\alpha)^{\TT_{\lam}}\simeq\bigcup\limits_{w\in {}_\zeta W^{J_\zeta}:
%\lambda'-w\lambda'\leq\alpha} {}_{\hphantom{J}\zeta}^{J_\zeta}\overline\CB{}^w_y$.
\end{Proposition}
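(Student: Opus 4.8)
The plan is to reduce this most general statement to the rational singular dominant case (Corollary \ref{zasrat}) by exactly the same twisting trick that was used twice already: to pass from the regular dominant case (Lemma \ref{phix}) to the regular non-dominant case (Proposition \ref{reg zas}), and from the singular dominant case (Lemma \ref{psix}, Corollary \ref{zasr}) to the general singular case (Proposition \ref{irreg zas}). Concretely, write $\lambda=y\lambda'$ with $\lambda'$ rational singular dominant and with the class of $y$ well defined in $_\zeta W^{J_\zeta}$; note that since $\zeta=\mu(\sqrt[n]1)$ depends only on $\mu$ and hence only on $\lambda$ (not on the chosen factorization), the endoscopic group $G_\zeta$ and its Weyl group $_\zeta W$ are the same whether we work with $\lambda$ or with $\lambda'$, so the subset $J_\zeta\subset I_\zeta$ attached to $\lambda'$ makes sense. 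Choose a representative $\dot y$ of $y$ in the normalizer $N_{G_\zeta}(T)$ (this is possible precisely because $y\in {}_\zeta W$); since $\dot y$ normalizes $T$ it acts on $\CB$, and for a $\TT_\lam$-invariant (based) quasimap $\phi$ the translate $\dot y^{-1}\cdot\phi$ is invariant under the conjugated one-parameter subgroup, which is $\TT_{\lam'}$ because $\lambda=y\lambda'$.

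The key steps, in order, are as follows. First, establish the bijection $\phi\mapsto\dot y^{-1}\cdot\phi$ from $(\oZ^\alpha)^{\TT_{\lam}}$ to the corresponding $\TT_{\lam'}$-fixed locus, tracking the image of $\infty\in\PP^1$: since $\phi(\infty)=\grb_-$ and $\dot y\in G_\zeta$, the point $\dot y^{-1}\grb_-$ lies in ${}_\zeta\CB$, and one checks as in \cite[Theorem~5.2 before~Lemma~5.3]{BFI} that this is the correct normalization, i.e. the translated quasimap lands in the moduli problem governed by $\lambda'$. Second, invoke Corollary \ref{zasrat} for the dominant coweight $\lambda'$ to identify the $\TT_{\lam'}$-fixed locus with $\bigcup_{w\in\ _\zeta W^{J_\zeta}:\ \lambda'-w\lambda'\leq\alpha}\ {}_{\hphantom{J}\zeta}^{J_\zeta}\overline\CB{}^w_e$, bijectively on $\CC$-points, and an isomorphism when the right-hand side is irreducible. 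Third, transport this identification back by $\dot y$: left translation by $\dot y$ on ${}_\zeta^{J_\zeta}\CB$ carries the cell ${}_{\hphantom{J}\zeta}^{J_\zeta}\CB^w_e$ (the ${}^{J_\zeta}U$-orbit through $\pi_{J_\zeta}(\grb_-)$, intersected with the repellent stratification) to ${}_{\hphantom{J}\zeta}^{J_\zeta}\CB^w_y$, since conjugating the attracting/repelling dynamics of $\lambda'(\CC^\times)$ by $\dot y$ gives the dynamics of $\lambda(\CC^\times)$ and simultaneously moves the base point $\pi_{J_\zeta}(\grb_-)$ to $\pi_{J_\zeta}(\dot y\grb_-)$, which by definition indexes the cell labeled by $y$. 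Taking the union over $w$ with $\lambda'-w\lambda'\leq\alpha$ (an $\alpha$-condition unaffected by the twist) yields the asserted morphism, with the irreducibility clause and the normality-of-Schubert-varieties argument of Corollary \ref{zas} giving the isomorphism statement verbatim.

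The main obstacle, such as it is, is bookkeeping rather than anything conceptual: one must be careful that the two a priori different constructions of the subgroup $_\zeta W=\ _\lambda W$ — via the centralizer $G_\zeta$ on the one hand and via the condition $\lambda-w\lambda\in\Lambda$ on the other — agree for $\lambda$ and are compatible with the chosen factorization $\lambda=y\lambda'$, so that $y$ genuinely has a well-defined class in $_\zeta W^{J_\zeta}$ and so that the parabolic $J_\zeta$ computed from $\lambda'$ is the right one; this was settled in the remark following \S\ref{rat} (simple-connectedness of $G$ forces $_\zeta W$ to be a Coxeter group with the stated generators). Beyond that, one should double-check that the twist interacts correctly with the ``defect only at $0$'' phenomenon used in Corollary \ref{zas} — but since $\dot y$ acts on $\CB$ by an automorphism fixing neither $0$ nor $\infty$ in $\PP^1$ but rather acting trivially on $\PP^1$, defects can still only occur at $0$, and the evaluation-at-$1$ map remains well defined and bijective onto its image by the proof of Lemma \ref{r phix}. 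I would therefore expect the proof to read, as the statement already hints, ``the standard by now argument establishes,'' with a one-sentence pointer to \cite[Theorem~5.2]{BFI} and to Corollaries \ref{zas} and \ref{zasrat}.
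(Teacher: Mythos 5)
Your proposal is correct and matches the paper's intended argument exactly: the paper's proof is literally the phrase ``the standard by now argument establishes,'' meaning precisely the reduction you spell out --- twist by $\dot y^{-1}$ as in Proposition~\ref{reg zas} (following \cite[Theorem~5.2]{BFI}) to reduce to the rational singular dominant case of Corollary~\ref{zasrat}, then conclude with the evaluation-at-$1$ and normality argument of Corollary~\ref{zas}. Your extra bookkeeping about $\zeta$, $G_\zeta$, and the well-definedness of the class of $y$ in $_\zeta W^{J_\zeta}$ is a faithful expansion of what the paper leaves implicit.
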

%---------------------------------------------------------------------------------------------------

\subsection{Kazhdan-Lusztig conjecture}\label{kl}
We now assume Theorem \ref{main1}. Then using the above results and the results of Section \ref{abs-form} we get the following result.
For any $w,y\in {}_\zeta W^{J_\zeta}$, the multiplicity of $L(y(\lambda)-\rho)$
in $M(w(\lambda)-\rho)$ equals the total costalk dimension
$\dim i^!_w\operatorname{IC}( {}_{\hphantom{J}\zeta}^{J_\zeta}\overline\CB{}^w_y)$.
This result is well known, and follows from~\cite[Theorem~1.24]{reductive},
\cite[Corollary~5.35]{jantzen}, and~\cite[Theorem~3.11.4]{BGS}.

\section{Construction of the action}\label{action}

In this Section we define the action of the algebra $\calU$ on the sheaf $\calF$ (cf.\ the notation of Theorem \ref{main1}).
The construction is an easy corollary of the derived Satake equivalence (cf.\ \cite{BF}) combined with some results from \cite{BG}.

\subsection{Derived Satake equivalence}
As usually we set $\calO=\CC[\![t]\!],\ \calK=\CC(\!(t)\!)$. We let $\Gr_G=G(\calK)/G(\calO)$ denote the affine Grassmannian of $G$. This is an ind-projective ind-scheme over $\CC$ of ind-finite type which carries a natural action of $G(\calK)\rtimes \CC^\times$. For a pro-algebraic subgroup $H$ of $G(\calO)\rtimes \CC^\times$ we denote by $D_H(\Gr_G)$ the ind-completion of the $H$-equivariant derived category of constructible sheaves on $\Gr_G$.\footnote{More precisely,
we choose one of the standard dg-enhancements of $D^b_H(\Gr_G)$ (see e.g.~\S\ref{examples}(b) below),
take its ind-completion, and then pass to its homotopy category. Note that $D_H(\Gr_G)$
is {\em not} equivalent to the unbounded $H$-equivariant derived constructuble category of
$\Gr_G$. It is often called the {\em renormalized} $H$-equivariant derived category.}
Let $\Perv_H(\Gr_G)$ denote the corresponding category of perverse sheaves.
It is well-known that $\Perv_{G(\calO)}(\Gr_G)=\Perv_{G(\calO)\rtimes \CC^\times}(\Gr_G)\simeq \Rep(G^{\vee})$. For a dominant weight $\lam$ of $G^{\vee}$ we denote by $\IC^{\lam}$ the corresponding object of $\Perv_{G(\calO)}(\Gr_G)$; it is the IC-complex of the closure of a $G(\calO)$-orbit on $\Gr_G$. This category naturally acts on $D_H(\Gr_G)$ for any $H$ (by convolutions on the right).

Let now $A$ be a graded $\CC$-algebra. We denote by $A^{[]}$ the corresponding dg-algebra with zero differential. We denote by $A^{[]}$-mod the category of dg-modules over $A^{[]}$ and by $D(A^{[]})$ the corresponding derived category.
Let also $D_{\perf}(A^{[]})$ denote the category of perfect complexes.
By definition this is the full subcategory of $D(A^{[]})$ generated by $A^{[]}$ by means of shifts, direct sums, cones and direct summands. If a reductive algebraic group $H$ acts on $A$ then we denote by $D^H(A^{[]})$ (resp.\ $D^H_{\perf}(A^{[]})$) the corresponding derived category of $H$-equivariant dg-modules (resp.\ the derived category of perfect $H$-equivariant modules), i.e.\ modules endowed with an action of $H$ compatible with its action on $A$. This category is endowed with a natural action of the tensor category $\Rep(H)$.

Consider now the category $D_{\perf}^{G^{\vee}}(U^{[]}_{\hbar}(\grg^\svee))$. This category has a natural monoidal structure.
In order to construct it, let us just remark that the structure of a $U_{\hbar}(\grg^\svee)$-module
endowed with a compatible (with respect to adjoint action) $G^{\vee}$-action on a vector space $M$ is the same as the structure of
a $U_{\hbar}(\grg^\svee)$-bimodule endowed with an action of $G^{\vee}$ such that the action of the diagonal copy of $\hbar\grg^\svee$ is equal to the derivative of the $G^{\vee}$-action multiplied by $\hbar$ (we shall call such gadgets \emph{asymptotic Harish-Chandra bimodules over} $\grg^\svee$). Given two asymptotic Harish-Chandra bimodules $M,N$ their tensor product $M\underset{U_{\hbar}(\grg^\svee)}\otimes N$ is again an asymptotic
Harish-Chandra bimodule. Hence the category of asymptotic
Harish-Chandra bimodules has a natural monoidal structure. It is easy to upgrade this construction to dg-modules; in this way we obtain a monoidal structure on $D_{\perf}^{G^{\vee}}(U^{[]}_{\hbar}(\grg^\svee))$ (cf.~\cite{BF} for more details). The same applies to the
%full derived category of $H$-equivariant dg-modules.
bigger category $D^{G^\vee}(U^{[]}_{\hbar}(\grg^\svee))$.

The following result is proved in~\cite{BF}:

\begin{Theorem}\label{Satake}
\begin{enumerate}
\item
  Let $D^b_{G(\calO)\rtimes \CC^{\times}}(\Gr_G)$ denote the bounded derived category of
  $G(\calO)\rtimes \CC^{\times}$-equivariant sheaves on $\Gr_G$ with constructible cohomology. Then
there exists an equivalence of mo\-no\-idal triangulated categories
\[\Psi_G\colon  D^b_{G(\calO)\rtimes \CC^{\times}}(\Gr_G)\tilde{\to} D_{\perf}^{G^{\vee}}(U^{[]}_{\hbar}(\grg^\svee))\] Here $G^{\vee}$ acts on $U_{\hbar}(\grg^\svee)$ by means of the adjoint action.

\item
The functor $\Psi_G$ sends the action of $\Perv_{G(\calO)}(\Gr_G)$ on the LHS to the natural $\Rep(G^{\vee})$-action on the RHS. In particular, we have $\Psi_G(\IC^{\lambda})=U_{\hbar}^{[]}(\grg^\svee)\otimes L(\lambda)$ where $L(\lambda)$ stands for the irreducible representation of $\grg^\svee$ with highest $\lambda$ and the action of $G^{\vee}$ on $U_{\hbar}^{[]}(\grg^\svee)\otimes L(\lambda)$ is the tensor product of the adjoint action of $G^{\vee}$ on $U_{\hbar}^{[]}(\grg^\svee)$ and its action on $V(\lam)$.

\item
Let   $D_{G(\calO)\rtimes \CC^{\times}}(\Gr_G)$ denote the ind-completion of the category $D^b_{G(\calO)\rtimes \CC^{\times}}(\Gr_G)$.
The functor $\Psi_G$ extends to an equivalence between $D_{G(\calO)\rtimes \CC^{\times}}(\Gr_G)$ and $D^{G^{\vee}}(U^{[]}_{\hbar}(\grg^\svee))$.
\end{enumerate}
\end{Theorem}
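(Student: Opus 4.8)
The plan is to recall the strategy of~\cite{BF}, which builds on the classical geometric Satake equivalence together with a purity-driven formality argument. First I would take as input the tensor equivalence $\Perv_{G(\calO)}(\Gr_G)\simeq\Rep(G^\vee)$ of Mirkovi\'c--Vilonen (cf.\ also Ginzburg and Beilinson--Drinfeld): the total cohomology $H^*$ is a fibre functor on the abelian category, whose monoidal structure is given by fusion, and Tannakian reconstruction produces $G^\vee$ with $\IC^\lambda\mapsto L(\lambda)$. Passing to the ind-completion, I would identify the ring object $\calR=\bigoplus_\lambda\IC^\lambda\otimes L(\lambda)^*$ with the regular representation $\calO(G^\vee)$, equipped with its residual $G^\vee$-action. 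Since the $\IC^\lambda$, for $\lambda$ a dominant coweight of $G^\vee$, are precisely the IC-sheaves of $G(\calO)$-orbit closures and hence generate $D^b_{G(\calO)\rtimes\CC^\times}(\Gr_G)$, this category becomes equivalent to the category of perfect dg-modules over the $G^\vee$-equivariant dg-algebra $\operatorname{RHom}_{G(\calO)\rtimes\CC^\times}(\calR,\calR)$, and the task reduces to identifying this dg-algebra, $G^\vee$-equivariantly, with $U^{[]}_\hbar(\grg^\svee)$.

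The heart of the matter is to compute the cohomology of that dg-algebra and to prove it is formal. For the cohomology I would use equivariant cohomology of $\Gr_G$: by the Kostant--Whittaker (quantum Hamiltonian) reduction computation of~\cite{BF}, $H^*_{G(\calO)\rtimes\CC^\times}(\Gr_G,\IC^\lambda)$ is the Whittaker reduction of $U_\hbar(\grg^\svee)\otimes L(\lambda)$, so that $\bigoplus_{\lambda,\mu}\Ext^*_{G(\calO)\rtimes\CC^\times}(\IC^\lambda,\IC^\mu)\cong\bigoplus_{\lambda,\mu}\Hom_{G^\vee}(L(\lambda),\,U_\hbar(\grg^\svee)\otimes L(\mu))$, i.e.\ $U_\hbar(\grg^\svee)$ with its adjoint $G^\vee$-action; the diagonal part of this is exactly the Harish--Chandra isomorphism $H^*_{G(\calO)\rtimes\CC^\times}(\on{pt})=S(\grt^*)^W\simeq\calZ_\hbar(\grg^\svee)$, with $\hbar$ furnished by the loop-rotation $\CC^\times$. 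To upgrade the dg-algebra $\operatorname{RHom}_{G(\calO)\rtimes\CC^\times}(\calR,\calR)$ to its cohomology with zero differential $U^{[]}_\hbar(\grg^\svee)$, I would invoke formality: the $\IC^\lambda$ are pure of weight $0$, so the induced weight grading on the Ext-algebra annihilates all higher $A_\infty$-operations, exactly as in the Koszul-duality formalism of Beilinson--Ginzburg--Soergel.

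It then remains to install the monoidal structure and to handle the ind-completion. I would transport the convolution product on $\Gr_G$ through the equivalence and check that it goes to the tensor product $\underset{U_\hbar(\grg^\svee)}\otimes$ of asymptotic Harish--Chandra bimodules; on the abelian level this reflects precisely the associativity and commutativity constraints of geometric Satake, from which part~(2) --- compatibility with the $\Perv_{G(\calO)}(\Gr_G)$- and $\Rep(G^\vee)$-actions, together with $\Psi_G(\IC^\lambda)=U_\hbar(\grg^\svee)\otimes L(\lambda)$ --- drops out of the construction, while the derived compatibility is tracked through the $E_\infty$-structure on $\calR$. Part~(3) is then formal: each side is the ind-completion of its subcategory of compact (perfect) objects, and the compact generators already correspond under $\Psi_G$. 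The main obstacle is the middle step: simultaneously pinning down the Ext-algebra \emph{with the correct relations} $XY-YX=\hbar[X,Y]$ --- which genuinely requires the Kostant--Whittaker reduction picture rather than any soft homological input --- and establishing its formality; everything else is classical input or bookkeeping.
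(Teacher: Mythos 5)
The paper does not prove this theorem; it is quoted verbatim from Bezrukavnikov--Finkelberg \cite{BF}, so there is no internal proof to compare against. Your outline is an accurate recapitulation of the strategy of \cite{BF} --- classical geometric Satake as input, identification of the equivariant Ext-algebra with $U_\hbar(\grg^\svee)$ via equivariant cohomology and Kostant--Whittaker reduction, formality from purity of the $\IC^\lambda$, and convolution matching the tensor product of asymptotic Harish--Chandra bimodules --- so it follows essentially the same route as the cited source.
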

\begin{Remark}The category $D_{G(\calO)\rtimes \CC^{\times}}(\Gr_G)$ is the renormalized derived category in the terminology of \cite{ArGa}; note that it is not the same as the actual derived category of $G(\calO)\rtimes \CC^{\times}$-equivariant sheaves on $\Gr_G$ (the latter is studied in \cite{ArGa} as well (without the loop rotation) cf.~\cite[Theorem 12.5.3]{ArGa} for its description in Langlands dual terms.
\end{Remark}
%----------------------------------------------------------------------------------------
\subsection{De-equivariantization}
Let $\calC$ be a co-complete additive category with an action of the tensor category
$\Rep(G^{\vee})$ (here $G^{\vee}$ can be any algebraic group). For
$X\in \calC,\ V\in \Rep(G^{\vee})$ we shall denote the action of $V$ on $X$ by $V\star X$.
Then we can construct
another category $\calC_{G^{\vee}}$ endowed with an action of $G^{\vee}$ (cf.~\cite{AG}),
called the \emph{de-equivariantization} of $\calC$. We also refer the reader
to~\cite[Examples 12 and 21]{Ga05} for the definition of the relevant notions; note that in
{\em loc.cit.\ }the categories in question are assumed to be abelian but the definitions are the same for arbitrary additive categories and in any case we shall soon switch to dg-categories for which the relevant theory is developed in~\cite{1-aff}.

By definition, an object of $\calC_{G^{\vee}}$ is an object $X$ of $\calC$ endowed with an action of $R_{G^{\vee}}$ where
$R_{G^{\vee}}$ is the regular representation of $G^{\vee}$ considered as a ring-object in $\Rep(G^{\vee})$.
\begin{NB}
  It means that we have a homomorphism $R_{G^\vee}\star X\to X$.
\end{NB}%
More explicitly, an object of $\calC_{G^{\vee}}$ is an object $X$ of $\calC$ together with a system
of morphisms $V\star X\to \unl{V}\otimes X$
for any $V\in \Rep(G^{\vee})$ (here $\unl{V}\otimes X$ just stands for the tensor product of the
vector space $\unl{V}$ underlying a $G$-module $V$ and an object $X$),
satisfying some natural compatibilities with respect to direct sums and tensor
products.\footnote{It is not difficult to see that these compatibilities imply that above
  morphisms are necessarily isomorphisms, but we shall not use this.}

The category $\calC_{G^{\vee}}$ is endowed with an  action of the group $G$ and it makes sense to
consider the category $\calC^{G^{\vee}}$ of $G^{\vee}$-equivariant objects.

By definition we have a natural forgetful functor $\sI_{G^{\vee}}\colon \calC_{G^{\vee}}\to \calC$
(we denote this functor by $\sI$ since in certain more general situation it is related to the
functor of induction from a subgroup). This functor admits a left adjoint $\sF_{G^{\vee}}$ which
sends an object $X$ to $R_{G^{\vee}}\star X$. In the case when $\calC=\Rep(G^{\vee})$ we have
$\calC_{G^{\vee}}=\Vect$ and this functor is the natural forgetful functor
from $\Rep(G^{\vee})$ to $\Vect$.

The following result is proved in \cite{AG} in the abelian case and in \cite{1-aff} in the
dg-case, cf.~\cite[Theorem 2.2.2]{1-aff}:
\begin{Theorem}
\begin{enumerate}
\item For abelian categories the procedures of equivaritization and de-equivarintizaion are mutually inverse.
\item The same is true for dg-categories (i.e.\ these procedures define a pair of mutually inverse equivalences between the $(\infty,1)$-category of dg-categories with weak $G^{\vee}$-action and the $(\infty,1)$-category of dg-categories with $\Rep(G^{\vee})$-action).
\end{enumerate}
\end{Theorem}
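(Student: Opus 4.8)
The plan is to reduce both statements to descent along the atlas $\on{pt}\to BG^{\vee}$ which, combined with the identification $\Rep(G^{\vee})=\on{QCoh}(BG^{\vee})$, turns the assertion into the $1$-affineness of $BG^{\vee}$.

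First I would reinterpret the two constructions sheaf-theoretically. A (dg- or abelian) category $\calD$ with a weak $G^{\vee}$-action is the same datum as a sheaf of categories $\mathscr D$ on $BG^{\vee}$ with $\calD\simeq(\on{pt}\to BG^{\vee})^{*}\mathscr D$; this is an equivalence because $\on{pt}\to BG^{\vee}$ admits descent for sheaves of categories (here one uses that $G^{\vee}$ is affine). Under this dictionary, equivariantization $\calD\mapsto\calD^{G^{\vee}}$ becomes $\Gamma(BG^{\vee},\mathscr D)$, which is naturally a module over $\Gamma(BG^{\vee},\on{QCoh})=\Rep(G^{\vee})$. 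Dually, a $\Rep(G^{\vee})$-module category $\calC$ localizes to a sheaf of categories on $BG^{\vee}$ whose value on $S\to BG^{\vee}$ is $\calC\otimes_{\Rep(G^{\vee})}\on{QCoh}(S)$, and de-equivariantization $\calC\mapsto\calC_{G^{\vee}}$ is its pullback to $\on{pt}$, i.e.\ $\calC\otimes_{\Rep(G^{\vee})}\Vect$; this matches the definition $R_{G^{\vee}}\text{-mod}(\calC)$ because $\Vect=\on{QCoh}(\on{pt})=\on{QCoh}(G^{\vee})^{G^{\vee}}=R_{G^{\vee}}\text{-mod}(\Rep(G^{\vee}))$ and forming modules over an algebra object commutes with base change of the ambient module category. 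With these translations in place, ``equivariantization and de-equivariantization are mutually inverse'' is precisely ``the localization functor $\on{QCoh}(BG^{\vee})\text{-mod}\to\mathrm{ShvCat}(BG^{\vee})$ and the global-sections functor are mutually inverse'', i.e.\ that $BG^{\vee}$ is $1$-affine.

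Next I would prove the $1$-affineness of $BG^{\vee}$ by descent along the same atlas. The \v{C}ech nerve of $\on{pt}\to BG^{\vee}$ is the cosimplicial affine scheme $[n]\mapsto G^{\vee\times n}$, and an affine scheme is $1$-affine, so at each level localization is an equivalence $\on{QCoh}(G^{\vee\times n})\text{-mod}\iso\mathrm{ShvCat}(G^{\vee\times n})$. It remains to see that the two sides globalize correctly: $\mathrm{ShvCat}(BG^{\vee})=\lim_{\Delta}\mathrm{ShvCat}(G^{\vee\times\bullet})$ by descent for sheaves of categories, and---the delicate point---$\on{QCoh}(BG^{\vee})\text{-mod}=\lim_{\Delta}\on{QCoh}(G^{\vee\times\bullet})\text{-mod}$. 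For part (1) one runs the same argument one categorical level lower, where the limits involved are tame finite limits of abelian categories; the only computation of substance is then the base case
\[
R_{G^{\vee}}\text{-mod}(\Rep(G^{\vee}))\simeq\Vect,\qquad \Vect^{G^{\vee}}\simeq\Rep(G^{\vee}),
\]
after which the equivalences for a general $\calC$ (resp.\ $\calD$) follow by a standard tensoring-up argument together with the Barr--Beck description of $\calC_{G^{\vee}}$ (resp.\ $\calD^{G^{\vee}}$) as (co)modules over the (co)monad attached to the forgetful functor.

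The main obstacle is the delicate point flagged above: that passage to module categories commutes with the \v{C}ech totalization, equivalently that $BG^{\vee}$ is $1$-affine. Everything else is formal manipulation of adjunctions and descent; this is the one place where the structure of $G^{\vee}$ genuinely enters---affineness of $G^{\vee}$ (so the \v{C}ech pieces are affine and $\on{QCoh}$ of them rigid/dualizable, making the cosimplicial diagram co/monadic) and, in characteristic zero, the good behaviour of $\Rep(G^{\vee})$---and it is established via the $\infty$-categorical Barr--Beck theorem in \cite{1-aff} (and, in the abelian incarnation, in \cite{AG}).
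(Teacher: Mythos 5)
Your proposal is correct and is essentially the paper's approach: the paper does not prove this theorem but cites \cite{AG} for (1) and \cite{1-aff} (Theorem 2.2.2) for (2), and its accompanying remark makes precisely your identification of the statement with the $1$-affineness of $\on{pt}/G^{\vee}$ via the dictionary between weak $G^{\vee}$-actions and sheaves of categories on $BG^{\vee}$. Your further sketch of the $1$-affineness proof (descent along $\on{pt}\to BG^{\vee}$ plus Barr--Beck, with affineness and finite type of $G^{\vee}$ as the essential input) is a faithful outline of the argument in the cited reference rather than something the paper itself carries out.
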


Some remarks are in order. First, let us say that in the terminology of~\cite{1-aff} the notion
of category with (weak) $G^{\vee}$-action is the same as the notion of sheaf of categories over
the classifying stack $\on{pt}\!/G^{\vee}$. The corresponding category of equivariant objects is the
category of global sections of this sheaf on $\on{pt}\!/G^{\vee}$. Thus assertion (2) above is indeed
equivalent to~\cite[Theorem 2.2.2]{1-aff}.

The second remark is this: given a dg-category $\calC$ one can consider its homotopy category
$\Ho(\calC)$. Now assume that $\calC$ is endowed with a $\Rep(G^{\vee})$-action. Then typically
$\Ho(\calC_{G^{\vee}})$ is not the same as $\Ho(\calC)_{G^{\vee}}$, and certainly the ``correct" object
to work with is the former rather than the latter. Thus from now on we shall switch to working
with dg-categories. So if we want to de-equivariantize a triangulated category $\calD$, we must
do it on dg-level. In other words, we first must choose a dg-model (or dg-lift) of $\calD$
(i.e.\ a pre-triangulated dg-category $\calD^{\on{dg}}$ such that $\Ho(\calD^{\on{dg}})\simeq \calD$)
also endowed with $\Rep(G^{\vee})$-action and then consider
$\Ho(\calD^{\on{dg}}_{G^{\vee}})$. This category might depend on the choice of the dg-lift. However,
abusing slightly the notation, when the choice of the dg-lift is clear we shall denote the
resulting category just by $\calD_{G^{\vee}}$.

\subsection{Examples}
\label{examples}
\textup{(a)}
Let $A$ be a dg-algebra over $\CC$ with a $G^{\vee}$-action. Let $\calD$ be the derived category of $G^{\vee}$-equivariant dg-modules over $A$.
Its natural dg lift $\calD^{\on{dg}}$ is the full subcategory of the category of $G^{\vee}$-equivariant dg-modules over $A$ generated by modules of the form $A\otimes V$ where $V$ is a finite-dimensional representation of $G^{\vee}$ by means of shifts, direct summands, cones and (infinite) direct sums.
The de-equivariantized category $\calD_{G^{\vee}}$ is then just the derived category of dg-modules over $A$.

\textup{(b)}
Let now $\calD$ be the derived Satake category $D_{G(\calO)\rtimes \CC^{\times}}(\Gr_G)$. Then its natural dg-lift can be constructed in 3 steps:

1) We consider the dg-category $(\calD^{\on{dg}})'$ whose objects are perverse $G(\calO)$-equivariant sheaves on $\Gr_G$ and
$\Hom_{\calD'}(\calF,\calG)=\oplus_i \Ext^i_{G(\calO)}(\calF,\calG)$ (the RHS is viewed as a complex with trivial differential).

2) We consider the Bondal-Kapranov pre-triangularisation $(\calD^{\on{dg}})'_{\text{pretr}}$, see
e.g.~\cite[\S1.5.4]{BV}) (i.e.\ we formally add direct summands, shifts and cones).

3) We let $\calD$ be the ind-completion of $(\calD^{\on{dg}})'_{\text{pretr}}$ (since we want to work with co-complete categories).

It follows from the very construction of \cite{BF} that the equivalence of Theorem \ref{Satake}(3) extends to an equivalence of the corresponding dg-lifts. Hence we have $(D_{G(\calO)\rtimes \CC^{\times}}(\Gr_G))_{G^{\vee}}\simeq D(U^{[]}_{\hbar}(\grg^{\vee}))$.

The main result of this Section is a construction of a functor $\Phi_Z$ as in the theorem below:
\begin{Theorem}\label{functor-z}
There exists a natural triangulated functor
\[
\Phi_Z\colon  D_{G(\CO)\rtimes\CC^{\times}}(\Gr_G)_{G^{\vee}}\to D_{\TT}(Z)
\]
such that
\begin{enumerate}
  \item
  The object $\sF_{G^{\vee}}(\IC^0)$ goes over to $\calF=\oplus_{\alp} \IC(Z^{\alp})$.
  \item
    Given an object $\calG\in D_{G(\CO)\rtimes\CC^{\times}}(\Gr_G)_{G^{\vee}}$,
    the following diagram commutes:
    %the composition of $\Phi_Z$ with the natural map
    %$\calZ_{\hbar}(\grg^\svee)\simeq H^*_{G(\CO)\rtimes\CC^{\times}}(\on{pt})\to
    %\Ext^*(\calG,\calG)$(here $\Ext$ is taken in the category
    %$D_{G(\CO)\rtimes\CC^{\times}}(\Gr_G)_{G^{\vee}}$) is equal to the composition of the
    %standard embedding $\calZ_{\hbar}(\grg^\svee)\to S(\grt)=H^*_{\TT}(\on{pt})$ with the
    %natural map $S(\grt)\to \Ext^*_{\TT}(\calF,\calF)$.
    \begin{equation*}
    \begin{CD}
      \calZ_{\hbar}(\grg^\svee)\simeq H^*_{G(\CO)\rtimes\CC^{\times}}(\on{pt})
      @>>> \Ext^*_{D_{G(\CO)\rtimes\CC^{\times}}(\Gr_G)_{G^{\vee}}}(\calG,\calG) \\
      @VVV @VV{\Phi_Z}V\\
      S(\grt) \simeq H^*_\TT(\on{pt}) @>>> \Ext^*_{\TT}(\Phi_Z(\calG),\Phi_Z(\calG)).
    \end{CD}
  \end{equation*}
\end{enumerate}
\end{Theorem}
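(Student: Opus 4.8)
The plan is to obtain $\Phi_Z$ from the derived geometric Satake equivalence of Theorem~\ref{Satake} together with the identification, due to \cite{BG} (and the attendant computation of the intersection cohomology of zastava spaces), of the IC-sheaves of Drinfeld's compactifications in terms of the Langlands dual group. First I would pass to the dual side: by Theorem~\ref{Satake}(3) and the de-equivariantization statement recalled just above, $D_{G(\CO)\rtimes\CC^{\times}}(\Gr_G)_{G^{\vee}}\simeq D(U^{[]}_{\hbar}(\grg^\svee))$, and under this equivalence $\sF_{G^{\vee}}(\IC^0)=R_{G^{\vee}}\star\IC^0$ corresponds to the free rank-one dg-module $U^{[]}_{\hbar}(\grg^\svee)$. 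Since that module compactly generates the target, a colimit-preserving triangulated functor $D(U^{[]}_{\hbar}(\grg^\svee))\to D_\TT(Z)$ is the same datum as an object $M\in D_\TT(Z)$ equipped with a dg-algebra map $U_{\hbar}(\grg^\svee)\to\Ext^*_\TT(M,M)$, and property~(1) forces $M=\calF$. The point, however, is \emph{not} to write such an action down by hand --- that would make Theorem~\ref{main1} circular --- but to construct $\Phi_Z$ geometrically, so that the action on $\calF$ and all the properties listed come out for free.

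For the geometric construction I would use \cite{BG}. Drinfeld's compactification $\overline{\Bun}_B\to\Bun_G$, localized at the point $0\in\PP^1$ and run through the convolution picture, produces a $\TT$-equivariant correspondence relating the affine Grassmannian $\Gr_G$ to the zastava ind-scheme $Z$: the Plücker description of a based quasi-map (the collection of line-subbundle maps $\kappa^{\lam^\svee}$) is built out of exactly the data cutting out the semi-infinite orbits in $\Gr_G$, so after the usual Beauville--Laszlo / factorization translation one gets a space mapping $\TT$-equivariantly to both $\Gr_G$ and $Z$, with the loop-rotation $\CC^{\times}$ on $\Gr_G$ matched to the $\CC^{\times}$ acting on $\PP^1$. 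Pulling and pushing along this correspondence, and then de-equivariantizing with respect to $G^{\vee}$ --- legitimate because, as discussed before Theorem~\ref{functor-z}, the equivalence of Theorem~\ref{Satake}(3) lifts to the dg-enhancements and \cite{1-aff} permits de-equivariantization on the dg-level --- yields the triangulated functor $\Phi_Z$. The statement $\sF_{G^{\vee}}(\IC^0)\mapsto\bigoplus_{\alpha}\IC(Z^{\alpha})$ is then exactly the main geometric content of \cite{BG}: the IC-sheaf of the zastava space (equivalently of $\overline{\Bun}_B$) is governed by $G^{\vee}$ --- its stalks and costalks compute the graded weight spaces of $U_{\hbar}(\grg^\svee)$ --- with the factorization structure of $Z$ packaging this uniformly over all $\alpha\in\Lam_+$.

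Property~(2) is then a formal consequence of the construction. Both horizontal arrows in the square are the canonical homomorphisms from equivariant cohomology of a point into the $\Ext$-algebras of $\calG$ and of $\Phi_Z(\calG)$, and since $\Phi_Z$ is assembled from $\TT$-equivariant geometry it intertwines the structure maps $H^*_{G(\CO)\rtimes\CC^{\times}}(\on{pt})\to\Ext^*(\calG,\calG)$ and $H^*_\TT(\on{pt})\to\Ext^*_\TT(\Phi_Z(\calG),\Phi_Z(\calG))$. On cohomology of a point the left vertical arrow is the inclusion $H^*_{G(\CO)\rtimes\CC^{\times}}(\on{pt})=S(\bft^*)^W\otimes\CC[\hbar]\hookrightarrow S(\bft^*)\otimes\CC[\hbar]=H^*_\TT(\on{pt})$, and under the Harish-Chandra identification $\calZ_{\hbar}(\grg^\svee)\cong S(\bft^*)^W\otimes\CC[\hbar]$ this is precisely the map used to form $\calU=U_{\hbar}(\grg^\svee)\otimes_{\calZ_{\hbar}(\grg^\svee)}S(\grt^*)$; hence the square commutes on the nose.

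I expect the genuine obstacle to be the middle step --- writing the correspondence between $\Gr_G$ and $Z$ down cleanly and proving that the de-equivariantized Satake unit lands on $\bigoplus_{\alpha}\IC(Z^{\alpha})$. This rests on the substantial geometry of \cite{BG} identifying the IC-sheaves of Drinfeld's compactifications via the dual group; by contrast the Satake reduction in the first step, the dg-enhancement bookkeeping, and the diagram chase for~(2) are all formal. A secondary technical nuisance is checking that the de-equivariantization respects the chosen dg-lifts on both sides, so that $\Phi_Z$ is an honest triangulated (indeed dg-) functor rather than one defined only up to the ambiguity in $\calD_{G^{\vee}}$ flagged earlier.
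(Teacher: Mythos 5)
Your overall strategy matches the paper's: derived Satake (Theorem~\ref{Satake}) plus the Hecke eigenproperty of the IC sheaves of Drinfeld's compactifications from \cite{BG}, packaged through dg-level de-equivariantization as in \cite{1-aff}. But the middle step of your argument --- ``a space mapping $\TT$-equivariantly to both $\Gr_G$ and $Z$ \dots pulling and pushing along this correspondence, and then de-equivariantizing with respect to $G^{\vee}$'' --- is vague exactly where the paper has to be precise, and it omits a whole layer of the construction. The paper does not build a direct correspondence between $\Gr_G$ and $Z$. Instead it first passes to the auxiliary ind-stack $_{c,\infty}\overline\Bun_B'$ (Drinfeld's compactification with arbitrary poles at $c=0\in\PP^1$, plus a framing at $\infty$ compatible with $B_-$), which carries a $\TT$-action, a Hecke action of $D_{G(\CO)\rtimes\CC^{\times}}(\Gr_G)$ via \cite[4.1.4]{BG}, \emph{and} an action of the lattice $\Lambda$ by modifying $\calP_T$ at $c$, i.e.\ an action of $\Rep(T^{\vee})$. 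One then de-equivariantizes with respect to $T^{\vee}$ first, so that $M:=\sF_{T^\vee}(\IC(\overline\Bun_B'))=\bigoplus_{\alpha}\IC(_{c,\geq\alpha}\overline\Bun_B')$ becomes an object of $D_\TT(_{c,\infty}\overline\Bun_B')_{T^\vee}$ satisfying the eigen-isomorphisms \eqref{objectM} supplied by \cite[Theorem~4.1.5]{BG}. The abstract-nonsense functor $\Act_{M,G^{\vee}}$ (built by observing that $\sI_{G^\vee}(X)\star M$ is an $R_{G^{\vee}}$-module and taking its fiber at $1\in G^{\vee}$) then lands in that de-equivariantized category, and only at the \emph{end} does one restrict along the open embedding $Z\hookrightarrow\ _{c,\infty}\overline\Bun_B'$ (the locus where $\calP_G$ is trivial) and forget the residual $T^{\vee}$-equivariance.

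Two concrete gaps in your write-up, then: (a) the $T^{\vee}$-de-equivariantization, which is what makes $M$ and eventually $\calF=\bigoplus_\alpha\IC(Z^\alpha)$ a single object rather than a $\Lambda$-indexed family, is entirely missing; (b) ``pull-push along a correspondence'' is not what happens --- the Satake category acts as a \emph{monoidal} category on $D_\TT(_{c,\infty}\overline\Bun_B')_{T^\vee}$, and the functor is extracted from that module structure via $\Act_{M,G^{\vee}}$, which has to be set up at the dg-level for the $R_{G^{\vee}}$-module structure on $\sI_{G^\vee}(X)\star M$ to make sense. Your diagnosis of where the difficulty lies is right, and your treatment of property~(2) as a formal diagram chase is also how the paper handles it; but as written, property~(1) is asserted rather than derived, precisely because the construction of $\Phi_Z$ stops one level short of being a definition.
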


% \begin{NB}
%   I think $\Ext^*_\TT(\calF,\calF)$ should be $\Ext^*_{\TT}(\Phi_Z(\calG),\Phi_Z(\calG))$.

%   Also this sentence is hard to read. How about write the commutative
%   diagram ?

% \end{NB}%

% \begin{NB}
%   What is the de-equivariantized category
%   $D_{\perf}^{G^{\vee}}(U^{[]}_{\hbar}(\grg^\svee))_{G^\vee}$ ?
%   $\sF_{G^\vee}(\IC^0)$ is $R_{G^\vee}\star
%   U^{[]}_{\hbar}(\grg^\svee)$. I do not see what is its $\Ext$ (you
%   claim $U_{\hbar}(\grg^\svee)$), as I do not see what is
%   de-equivarintized category. Well, probably I could go back to the
%   definition of de-equivariantization and compute it.... But what is
%   the answer ?

%   Is
%   $D_{\perf}^{G^{\vee}}(U^{[]}_{\hbar}(\grg^\svee))_{G^\vee} =
%   D_{\perf}(U^{[]}_{\hbar}(\grg^\svee))$, i.e., just
%   $G^\vee$-equivalence forgotton ? Here
%   $R_{G^\vee}\star U^{[]}_{\hbar}(\grg^\svee)$ is sent to
%   $U^{[]}_{\hbar}(\grg^\svee)$ in the RHS. If this would be correct,
%   it should be mentioned earlier, as it is the important example which
%   is used.
% \end{NB}%

\noindent The proof of~Theorem~\ref{functor-z} occupies~\S\S\ref{more an}-\ref{end cons}.

It follows from Theorem \ref{Satake} that $\Ext^*(\sF_{G^{\vee}}(\IC^0),\sF_{G^{\vee}}(\IC^0))$ (computed in the category $D_{G(\CO)\rtimes\CC^{\times}}(\Gr_G)_{G^{\vee}}$) is equal to $U_{\hbar}(\grg^\svee)$. Hence the first assertion of Theorem \ref{functor-z}
gives rise to a homomorphism $U_{\hbar}(\grg^\svee)\to \Ext^*_{\TT}(\calF,\calF)$. The second assertion guarantees it extends to a homomorphism
$\calU\to \Ext^*_{\TT}(\calF,\calF)$ which is claimed in Theorem \ref{main1}.

  \subsection{More abstract nonsense}
  \label{more an}
In order to construct the functor $\Phi_Z$ we need one more abstract categorical construction. Assume that in the setup of the previous subsection $\calC$ is actually a monoidal category. Assume in addition that we have a monoidal functor $S\colon \Rep(G^{\vee})\to \calC$ so that the action of $V\in \Rep(G^{\vee})$ on $\calC$ is the right multiplication by $S(V)$.\linelabel{action_line}
Hence we have a system of functorial isomorphisms $X\star S(V) \simeq \underline{V}\otimes X$ for
$X\in\mathcal C_{G^\vee}$, $V\in\operatorname{Rep}(G^\vee)$.
\begin{NB}
  If I denote by $\otimes^\calC$ the tensor product in $\calC$, we require
  $V\star M \cong M\otimes^\calC S(V)$.

  It is probably not a good idea to introduce a new notation
  $\otimes^\calC$. So how about adding the following at the end of
  l.\ref{action_line} ?

  `Hence we have a system of functorial isomorphisms $X\star S(V) \simeq \underline{V}\otimes X$ for $X\in\mathcal C_{G^\vee}$, $V\in\operatorname{Rep}(G^\vee)$.'
  \begin{NB2}
    Done.
    \end{NB2}%
\end{NB}%

Let us now assume that we are given another additive category $\calD$ endowed with an action
of the monoidal category $\calC$. Then any $M\in \on{Ob}(\calD)$ defines a functor
$\Act_M\colon \calC\to \calD$ which sends any $X\in \on{Ob}(\calC)$ to $X\star M$.

\begin{Lemma}
  Assume that for an object $M\in \on{Ob}(\calD)$ we are given a system of functorial
isomorphisms
\begin{equation}\label{objectM}
S(V)\star M\simeq \unl{V}\otimes M\ \text{for $V\in \Rep(G^{\vee})$}
\end{equation}
% \begin{NB}
%   I repeat my question. What are $V\star M$ and $V\otimes M$ ?
%
%   I have no idea what is $V\star M$, as $\calD$ is just an additive
%   category with an action of $\calC$. And $V$ is \emph{not} an object
%   of $\calC$.
%
%   For $V\otimes M$, do you assume someting on $\calD$ to define
%   $\underline{V}\otimes M$ ?
% \end{NB}%
satisfying obvious compatibility conditions with respect to tensor products.
Then the functor $\Act_M$ can be upgraded to a functor\footnote{Here we use the action of
  $\Rep(G^{\vee})$ on $\calC$ by means of right multiplication.}
\[\Act_{M,G^{\vee}}\colon \calC_{G^{\vee}}\to \calD.\]
\end{Lemma}

\begin{proof}
The idea is as follows. Let $X\in \on{Ob}(\calC_{G^{\vee}})$.
The object $\sI_{G^{\vee}}(X)\star M$ of $\calD$ has a natural structure of a module over the ring $R_{G^{\vee}}$. Indeed, to construct this module structure it is enough for any $V\in \Rep(G^{\vee})$ to construct
an isomorphism $\unl{V}\otimes (\sI_{G^{\vee}}(X)\star M)\simeq \unl{V}\otimes (\sI_{G^{\vee}}(X)\star M)$.
The latter is constructed as the composition
\[
\begin{aligned}
  \unl{V}\otimes (\sI_{G^{\vee}}(X)\star M)=\sI_{G^{\vee}}(X)\star(\unl{V}\otimes M)\simeq \sI_{G^{\vee}}(X)\star
  (S(V)\star M)\simeq \\
  (\sI_{G^{\vee}}(X)\star S(V))\star M\simeq (\unl{V}\otimes \sI_{G^{\vee}}(X))\star M=
  \unl{V}\otimes (\sI_{G^{\vee}}(X)\star M).
\end{aligned}
\]
\begin{NB}
  The second term of the second line should be
  $(\unl{V}\otimes \sI_{G^\vee}(X))\star M$. The last $\simeq$ should be $=$.
  \begin{NB2}
    Done.
    \end{NB2}%
\end{NB}%
We would like to define $\Act_{M,G^{\vee}}(X)$ as the fiber
of $\sI_{G^{\vee}}(X)\star M$ at $1\in G^{\vee}$ (i.e.\ take tensor product with $\CC$ over $R_{G^{\vee}}$
where the latter acts on the former via the evaluation map at $1\in G^\vee$).
Here we again have to work on dg-level. Namely, we assume that

1) The categories $\calC$ and $\calD$ are triangulated categories which are homotopy categories of some co-complete pre-triangulated dg-categories $\calC^{\on{dg}},\calD^{\on{dg}}$.

2) All of the above functors (monoidal structure on $\calC$, action of $\calC$ on $\calD$, the functor $\Rep(G^{\vee})\to \calC$ and the corresponding morphisms between them) are lifted to the dg-level.

\noindent
Then the above construction makes sense and we indeed get a functor $\Act_{M,G^{\vee}}\colon \calC_{G^{\vee}}\to \calD$. The above choice of a dg-model will be automatic in applications (since all the categories involved will be some kind of derived categories of sheaves).
\end{proof}

\subsection{The stack $_{c,\infty}\overline\Bun_B$}\label{bg}
Let $C$ again be a smooth projective curve and let $c\in C$.
Recall the stack $_{c,\infty}\overline\Bun_B$ introduced in~\cite[4.1.1]{BG}.
It classifies the same data as in~\S\ref{bunbb} except that now the maps $\kappa^{\lam^\svee}$ are allowed to have arbitrary poles at $c$.
% \begin{NB}
%   Do you mean `zeroes or poles' ?
%   \begin{NB2}
%     Done.
%     \end{NB2}%
% \end{NB}%
It contains $\overline\Bun_B$ as a closed sub-stack.
More generally, for any $\alp\in\Lambda$ the ind-stack $_{c,\infty}\overline\Bun_B$ contains a locally closed sub-stack $_{c,\alpha}\overline\Bun_B$  such that the closure
\[
_{c,\geq\alpha}\overline\Bun_B:=\overline{_{c,\alpha}\overline\Bun_B}
=\bigsqcup\limits_{\beta\geq \alpha}{_{c,\beta}\overline\Bun_B}.
\]
By definition $_{c,\alpha}\overline\Bun_B$ consists of all triples $(\calP_G,\calP_T,\kappa)$ such that for any dominant weight $\lam^\svee$  the map
$\calP_T^{\lambda^\svee}(\langle \alp,\lam^\svee\rangle\cdot x)\to \calP_G^{V(\lambda^\svee)}$ has neither zeroes nor poles at $c$.

We have the Hecke correspondences
\[
_{c,\infty}\overline\Bun_B\stackrel{'h^\leftarrow_G}{\longleftarrow} {}_{c,\infty}\Hecke_{B,G,c} \stackrel{'h^\rightarrow_G}{\longrightarrow} {}_{c,\infty}\overline\Bun_B,
\]
 see~\cite[4.1.4]{BG}.
 Let us also take $C=\PP^1,\ c=0$. Then the above correspondences
give rise to the convolution action of the equivariant derived category $D_{G(\CO)\rtimes\CC^{\times}}(\Gr_G)$
on the equivariant derived category of $D_{\CC^{\times}}(_{c,\infty}\overline\Bun_B)$.
Let now $_{c,\infty}\overline\Bun_B'$ denote the ind-stack classifying the same data as before together with a trivialization of $\calP_G$ at $\infty\in \PP^1$ such that

1) The generalized $B$-structure $\kappa$ does not have a defect at $\infty$;

2) The value of the $B$-structure $\kappa$ at $\infty\in \PP^1$ is equal to $B_-$ (this makes sense because the fiber of $\calP_G$ at $\infty$ is trivialized).

\noindent
This stack is endowed with a natural action of the torus $\TT=T\times \CC^{\times}$ where the 2nd factor just acts on $\PP^1$ (preserving the points $0$ and $\infty$) and the 1st factor acts by changing the trivilization of $\calP_G$ at $\infty$ (note that the action of $T$ on $\calB$ preserves the point $B_-$).
% \begin{NB}
%   I am not sure that `the above $B$-structure' is clear. The only
%   $B$-structure discussed above is $\kappa$. How about `The $B$-structure
%   on the fiber of $\calP_G$ given by the trivialization' ?
%   \begin{NB2}
%     Done.
%     \end{NB2}%
% \end{NB}%

\noindent
Similarly we can define stacks $_{c,\geq \alpha}\overline\Bun_B'$.
In particular, $_{c,\geq0}\overline\Bun_B'=:\overline\Bun_B'$.
Then in the same way we get an action of $D_{G(\CO)\rtimes\CC^{\times}}(\Gr_G)$ on
$D_{\TT}(_{c,\infty}\overline\Bun_B')$.

In addition the ind-stacks $_{c,\infty}\overline\Bun_B$ and $_{c,\infty}\overline\Bun_B'$ admit a natural action of the lattice $\Lambda$ (by modifying the $T$-bundle $\calP_T$ at $c$). Hence the corresponding derived categories of sheaves also admit an action of $\Lambda$ which is the same as the action of the tensor category $\Rep(T^{\vee})$.
%In particular, given $\CF\in D_{G(\CO)\rtimes\CC^{\times}}(\Gr_G)$ we can consider its convolution with the IC sheaf of the stratum $_{c,\geq0}\overline\Bun_B$ (cf.~\cite[Theorem 4.1.5]{BG}).
%Then the restriction $\fs^!\CF(\IC_{_{c,\geq0}\overline\Bun_B})$ lies in the equivariant derived category
%$D_{T(\CO)\rtimes\CC^{\times}}(\Gr_T)$.
We would like now to use the formalism of the previous subsection. Namely, we set $\calC=D_{G(\CO)\rtimes\CC^{\times}}(\Gr_G)$,
$\calD=D_{\TT}(_{c,\infty}\overline\Bun_B')_{T^{\vee}}$, $M=\sF_{T^{\vee}}(\IC(\overline\Bun_B'))$.
\begin{NB}
  Could you confirm that this is \emph{not}
  $\sF_{T^\vee}(\IC(_{c,\infty}\overline\Bun_B'))$ ? $\overline\Bun_B'$
  is viewed as a closed substack of
  $_{c,\infty}\overline\Bun_B'$. (This is not explained yet, as
  we only explained the corresponding statement without ${}'$.
  \begin{NB2}
    I do confirm. The IC-sheaf of the ind-stack $_{c,\infty}\overline\Bun_B'$ is not defined at
    our present level of education. $\overline\Bun_B'$ is now introduced in the previous paragraph.
    \end{NB2}%
\end{NB}%
Note that the de-equivariantized derived category $D_{\TT}(_{c,\infty}\overline\Bun_B')_{T^{\vee}}$ is just the category of $\Lambda$-equivariant objects in the usual derived category $D_{\TT}(_{c,\infty}\overline\Bun_B')$. In this realization we have
\[
M=\bigoplus\limits_{\alpha\in \Lambda} \IC(_{c,\geq \alpha}\overline\Bun_B').
\]
% \begin{NB}
%   $\IC(_{c,\geq \alpha}\overline\Bun_B')$ ?
%   \begin{NB2}
%     Done.
%     \end{NB2}%
% \end{NB}%
The isomorphisms (\ref{objectM}) are essentially given
by ~\cite[Theorem 4.1.5]{BG}.\footnote{Formally in \cite{BG} the $\CC^{\times}$-equivariant situation was never considered, but since all the sheaves involved in (\ref{objectM}) are perverse, the results of ~\cite[Theorem 4.1.5]{BG} extend automatically to the equivariant situation.}
Thus we get a functor $\Act_{M,G^{\vee}}\colon D_{G(\CO)\rtimes\CC^{\times}}(\Gr_G)_{G^{\vee}}\to
D_\TT(_{c,\infty}\overline\Bun_B')_{T^\vee}$.
\begin{NB}
  $D_{\CC^{\times}}(_{c,\infty}\overline\Bun_B')_{T^\vee}$ ?
  a) ${}'$ is missing, b) $T^\vee$ is missing.
  \begin{NB2}
    Done.
    \end{NB2}%
\end{NB}%

\subsection{End of the construction}
\label{end cons}
To finish the construction of $\Phi_Z$ it remains to notice that the ind-scheme $Z$ is an open sub-ind-scheme of the ind-stack $_{c,\infty}\overline\Bun_B'$ (given by the condition that the $G$-bundle $\calP_G$ is trivial). So we define the functor $\Phi_Z$ to be
the composition of $\Act_{M,G^{\vee}}$ with the (open) restriction to $Z$, followed by the forgetful functor $D_{\TT}(Z)_{T^{\vee}}\to D_{\TT}(Z)$.
The fact that it satisfies the conditions of Theorem \ref{functor-z} is straightforward.
% \begin{NB}
%   Where do you forget $T^\vee$-de-equivariantization ?
%   \begin{NB2}
%     Done.
%     \end{NB2}%
% \end{NB}%

% \begin{NB}
%   I am not yet convinced how the $T$-action on $Z$, for which we
%   consider the equivariant category $D_{\TT}(Z)$, appears. Do we have
%   a $\TT$-action on $_{c,\infty}\overline\Bun_B'$, and we consider
%   $\TT$-equivariant category ?
% \end{NB}%

\subsection{Factorization}
Recall the factorization morphism $\pi_\theta\colon Z^\theta\to\AA^\theta$. It is defined as the
pullback of the Schubert divisor (the union of $B$-invariant divisors)
in the flag variety $\CB$.
Recall the factorization property of $\pi$: a canonical isomorphism for a decomposition
$\theta=\alpha+\eta$
\[(Z^\alpha\times Z^\eta)|_{(\AA^\alpha\times\AA^\eta)_{\on{disj}}}\cong
Z^\theta\times_{\AA^\theta}(\AA^\alpha\times\AA^\eta)_{\on{disj}}.\]
We may further restrict this isomorphism to the open subset
\[\oZ^\alpha\times Z^\eta|_{(\BG_m^\alpha\times\AA^\eta)_{\on{disj}}}\subset
(Z^\alpha\times Z^\eta)|_{(\AA^\alpha\times\AA^\eta)_{\on{disj}}}.\]
Given $\calF_1,\calF_2\in D_\TT(Z)$ and $e\in\Ext^*_{D_\TT(Z)}(\CF_1,\CF_2)$ we can restrict
$e$ to $Z^\theta$, pull it back to $Z^\theta\times_{\AA^\theta}(\AA^\alpha\times\AA^\eta)_{\on{disj}}$,
and then restrict to the open subset $\oZ^\alpha\times Z^\eta|_{(\BG_m^\alpha\times\AA^\eta)_{\on{disj}}}\subset
Z^\theta\times_{\AA^\theta}(\AA^\alpha\times\AA^\eta)_{\on{disj}}$ to obtain $e_{\alpha,\eta}$.
In particular, by definition, $e_{0,\theta}=e|_{Z^\theta}$.

\begin{Lemma}
  \label{tautolog}
  \textup{(a)} Given $\calP\in D_{G(\calO)\rtimes\CC^\times}(\Gr_G)_{G^\vee},\ \Phi_Z(\calP)$ is
  factorizable in the sense that $\Phi_Z(\calP)$ restricted to
  $\oZ^\alpha\times Z^\eta|_{(\BG_m^\alpha\times\AA^\eta)_{\on{disj}}}$ is canonically isomorphic to
  $\big(\IC(\oZ^\alpha)\boxtimes\Phi_Z(\calP)|_{Z^\eta}\big)|_{(\BG_m^\alpha\times\AA^\eta)_{\on{disj}}}$.

  \textup{(b)} Given $e\in\Ext^*_{D_{G(\calO)\rtimes\CC^\times}(\Gr_G)_{G^\vee}}(\calP_1,\calP_2),\ \Phi_Z(e)$
  is factorizable in the sense that $\Phi_Z(e)_{\alpha,\eta}$
  %restricted to $\oZ^\alpha\times Z^\eta|_{(\BG_m^\alpha\times\AA^\eta)_{\on{disj}}}$
  is equal to $\on{Id}_{\IC(\oZ^\alpha)}\boxtimes\Phi_Z(e)_{0,\eta}$.
\end{Lemma}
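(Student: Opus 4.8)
The plan is to reduce both statements to the explicit construction of $\Phi_Z$ given in \S\ref{bg} and its continuation, together with two inputs: the factorization property of the sheaf $M=\bigoplus_\alpha\IC({}_{0,\geq\alpha}\overline\Bun_B')$, and the fact that the Hecke correspondences of \cite[4.1.4]{BG} used to build the $D_{G(\calO)\rtimes\CC^\times}(\Gr_G)$-action on $D_\TT({}_{0,\infty}\overline\Bun_B')$ are modifications at the single point $c=0\in\PP^1$. Recall that $\Phi_Z$ is by definition $\Act_{M,G^\vee}$ followed by the open restriction to $Z\subset{}_{0,\infty}\overline\Bun_B'$ and the forgetful functor $D_\TT(Z)_{T^\vee}\to D_\TT(Z)$, and that $\Act_{M,G^\vee}(X)$ is produced from the convolution $\sI_{G^\vee}(X)\star M$ by equipping it with the $R_{G^\vee}$-module structure coming from $(\ref{objectM})$ and taking the fiber at $1\in G^\vee$. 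So it is enough to check that each of these operations is compatible with the product decomposition of $Z^\theta$ over the disjoint locus $(\BG_m^\alpha\times\AA^\eta)_{\on{disj}}$ recalled in \S\ref{action}.

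Concretely I would proceed as follows. First, record the factorization of the ambient picture: over the open locus of ${}_{0,\geq\theta}\overline\Bun_B'$ where the degree-$\theta$ defect splits into a degree-$\alpha$ part lying in $\BG_m^\alpha$ (hence away from both $0$ and $\infty$) and a degree-$\eta$ part, the ind-stack decomposes as $\oZ^\alpha\times{}_{0,\geq\eta}\overline\Bun_B'$ restricted to the disjointness constraint, and accordingly $\IC({}_{0,\geq\theta}\overline\Bun_B')$ restricts there to $\IC(\oZ^\alpha)\boxtimes\IC({}_{0,\geq\eta}\overline\Bun_B')$; this is the standard factorization of $\overline\Bun_B$ from \cite{BG} together with the smoothness of $\oZ^\alpha$ from \cite{fkmm}. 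Thus $M$ is factorizable with the $\oZ^\alpha$-factor supported over $\PP^1\setminus\{0\}$. Second, observe that the Hecke correspondences ${}_{0,\infty}\Hecke_{B,G,0}$, being modifications at $0$, factor over the same locus as $\oZ^\alpha$ times the Hecke correspondences for the $\eta$-part; hence convolution with $\sI_{G^\vee}(X)$ acts only on the $\eta$-factor and is the identity on the $\oZ^\alpha$-factor. Consequently $\sI_{G^\vee}(X)\star M$, the $R_{G^\vee}$-module structure on it, and the fiber at $1\in G^\vee$ all respect the decomposition canonically, and restricting to $Z$ and forgetting equivariance yields a canonical isomorphism $\Phi_Z(\calP)|_{\oZ^\alpha\times Z^\eta|_{(\BG_m^\alpha\times\AA^\eta)_{\on{disj}}}}\cong\big(\IC(\oZ^\alpha)\boxtimes\Phi_Z(\calP)|_{Z^\eta}\big)|_{(\BG_m^\alpha\times\AA^\eta)_{\on{disj}}}$, which is (a). For (b), every step in the construction of $\Phi_Z$ is functorial in $D_{G(\calO)\rtimes\CC^\times}(\Gr_G)_{G^\vee}$, so a class $e\in\Ext^*(\calP_1,\calP_2)$ maps to a morphism $\Phi_Z(e)$ respecting these identifications; since the $\oZ^\alpha$-factor is untouched by the convolution at $0$, $\Phi_Z(e)_{\alpha,\eta}$ equals $\on{Id}_{\IC(\oZ^\alpha)}\boxtimes\Phi_Z(e)_{0,\eta}$, and $\Phi_Z(e)_{0,\eta}=\Phi_Z(e)|_{Z^\eta}$ by the convention $e_{0,\theta}=e|_{Z^\theta}$.

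I expect the main obstacle to be purely bookkeeping: one must verify that the abstract categorical machinery of \S\ref{action} — the de-equivariantization, the passage to dg-models, the functorial isomorphisms $S(V)\star M\simeq\underline V\otimes M$ of \cite[Theorem 4.1.5]{BG}, and the fiber at $1\in G^\vee$ — all commute with restriction to the disjoint locus $(\BG_m^\alpha\times\AA^\eta)_{\on{disj}}$ and with the resulting external product, i.e. that factorization of the underlying stacks and Hecke correspondences genuinely propagates through these constructions. Granting the factorization of $M$ and of the Hecke correspondences at $c=0$, this compatibility is formal, which is the sense in which the statement is tautological.
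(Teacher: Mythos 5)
Your proposal is correct and follows essentially the same approach as the paper: the paper likewise observes that (b) follows from (a) by functoriality of $\Phi_Z$, and for (a) it simply cites~\cite[Proposition~5.1.5]{abbgm} (noting that the perversity hypothesis in \emph{loc.\ cit.}\ is immaterial), which is exactly the factorization statement you reprove by unwinding the construction of $\Phi_Z$ through the factorizability of $M$ and the locality at $c=0$ of the Hecke correspondences. In other words, you supply the content of the cited result rather than citing it, but the underlying argument is the same.
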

\begin{proof}
First of all, it is clear that (b) follows from (a) (provided that the natural isomorphism in the formulation of (a) is functorial with respect to $\calP$). On the other hand (a) is essentially the statement of~\cite[Proposition 5.1.5]{abbgm} (in {\em loc.cit.} it is assumed that $\calP$ is perverse but this does not affect the proof).
\end{proof}

\section{Standard and co-standard modules}\label{standard}
\label{standard and co-standard}
In this Section we finish the proof of Theorem \ref{main1}. Part (2) of the Theorem is obvious from the construction of the homomorphism $\iota$; part (1) will follow from the computation of the action of $\calU$ on the stalk of $\calF$ at $0$. Part (3) immediately follows from parts (4), (5) and (6). So, the main work will be to prove parts (4), (5) and (6). In fact, the set $W_x$ was already computed
in~\S\ref{fixed}, so we just need to identify standard and co-standard modules with Verma modules and their duals.

\subsection{The costalk at $0$}
\label{the costalk}
We begin by the following. Let $\grs\colon  \on{pt}\to Z$ be the embedding of the point $0$
(the unique $\TT$-fixed point).
Then $\grs^!\calF$ can naturally be viewed as an object of $D_{\TT}(\on{pt})$ and the
$S(\grt^*)$-module $H^*_{\TT}(\grs^!\calF)$  acquires a natural structure of a
graded $\calU$-module.
% \begin{NB}\linelabel{tensor}
%   Why we need $\otimes S(\grt^*)$ ? $\calF$ is an object in $D_{\TT}(Z)$,
%   hence $\grs^!\calF\in D_{\TT}(\mathrm{pt})$. So I understand
%   $\grs^!\calF\otimes S(\grt^*)$ is $\grs^!\calF\otimes_{S(\grt^*)} S(\grt^*)$.
%   \begin{NB2}
%     Done.
%     \end{NB2}%
% \end{NB}%
We want to give its algebraic description. Before we do this, let us first recall the computation
of its graded character~\cite[Theorem 1.12]{BFGM}. Namely,
$\on{ch}(\grs^!\calF)=\sum_{\alpha\in\Lambda_+}K_\alpha(q)$, where $\deg q=2$, and
$K_\alpha(q)=\sum_{\alpha_1+\ldots+\alpha_i=\alpha}q^i$ (the sum is taken over the set of unordered
partitions of $\alpha$ into a sum of positive roots of $\grg^\svee$).

\subsection{The module $\calM$}
Now, let us define algebraically a graded module $\calM$ over $\calU$ (which will eventually
turn out to be isomorphic to $H^*_{\TT}(\grs^!\calF)$).
Namely, let us first set
\[
\calM'=U_{\hbar}(\grg^\svee)/\grn_+^\svee.
\]
It has a natural structure of a left $U_{\hbar}(\grg^\svee)$-module.
In addition it has the following structures:

1) Right action of $S(\grt^*)\simeq S_{\hbar}(\bft^*)$;

2) Action of the group $T^\vee$ (i.e.\ grading by $\Lambda$).

\noindent
These structures are compatible in the following way: given $a\in \bft^*$ the action of $a$ on
$\calM'$ coming from deriving the action of the
group $T^\vee$ multiplied by $\hbar$ is equal to the difference of the left action of
$a\in U_{\hbar}(\grg^\svee)$ and the right action of $a\in S_{\hbar}(\bft^*)$.

In what follows we are going to slightly change the right action of $S(\grt^*)$.
Namely, we consider the following $S(\grt^*)-S(\grt^*)$-bimodule $S(\grt^*)(-\rho)$:
as a left $S(\grt^*)$-module it coincides with the free rank one module $S(\grt^*)$, but
the right action of $a\in\bft^*$ is multiplication by $a-\hbar\langle a,\rho\rangle$.
Finally, $\calM:=\calM'\otimes_{S(\grt^*)}S(\grt^*)(-\rho)$.

\begin{Theorem}
  \label{universal Verma}
$H^*_{\TT}(\grs^!\calF)$ is isomorphic to $\calM$ as a graded $\calU$-module.
\end{Theorem}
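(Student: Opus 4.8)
The plan is to write down a canonical morphism of graded $\calU$-modules $\psi\colon\calM\to H^*_\TT(\grs^!\calF)$ sending the cyclic generator of $\calM$ to a tautological class, and then to prove it is an isomorphism. First I would unwind the module structure. By the adjunction $(\grs_!,\grs^!)$ one has $H^*_\TT(\grs^!\calF)=\Ext^*_{D_\TT(Z)}(\grs_!\CC_{\on{pt}},\calF)$, with $\calU$ acting through $\iota$ by post-composition. Since $\calF=\bigoplus_{\alp\in\Lam_+}\IC(Z^\alp)$ and $Z^0=\on{pt}$, the summand $\alp=0$ is $\IC(Z^0)=\grs_*\CC_{\on{pt}}=\grs_!\CC_{\on{pt}}$, so there is a tautological class $v_0=[\on{id}]\in\Ext^0_\TT(\grs_!\CC_{\on{pt}},\grs_!\CC_{\on{pt}})\subseteq H^0_\TT(\grs^!\calF)$, and it generates the free rank-one $S(\grt^*)$-submodule $H^*_\TT(\grs^!\IC(Z^0))=H^*_\TT(\on{pt})$. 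The decomposition of $\calF$ endows $H^*_\TT(\grs^!\calF)=\bigoplus_\alp H^*_\TT(\grs^!\IC(Z^\alp))$ with a $\Lam$-grading which, by Theorem~\ref{main1}(2), is compatible with the weight grading on $\calU$.

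To construct $\psi$ I would verify two things. First, $\grn_+^\svee\cdot v_0=0$: the class $v_0$ lies in the extreme graded piece $\alp=0$, while the image under $\iota$ of the positive nilpotent radical of $\grg^\svee$ sends this piece into components $\IC(Z^\theta)$ with $\theta\notin\Lam_+$ --- in the conventions fixed by Theorem~\ref{main1}(2) the positive Chevalley generators strictly decrease the $\Lam$-grading --- and these components vanish. Hence $e\mapsto\iota(e)v_0$ factors through $\calM'=U_\hbar(\grg^\svee)/\grn_+^\svee$ as a map of left $U_\hbar(\grg^\svee)$-modules. Second, this map is compatible with the right $S(\grt^*)$-action on $\calM'$ after the twist by $S(\grt^*)(-\rho)$: here I would invoke the commuting square of Theorem~\ref{functor-z}(2), which identifies the restriction of $\iota$ to the center $\calZ_\hbar(\grg^\svee)=H^*_{G(\CO)\rtimes\CC^\times}(\on{pt})$ with the Harish-Chandra homomorphism $\calZ_\hbar(\grg^\svee)\xrightarrow{\sim}S(\grt^*)^W\hookrightarrow S(\grt^*)=H^*_\TT(\on{pt})$; the $-\rho$-twist is exactly the shift by which the Harish-Chandra homomorphism differs from the identity identification $\calZ_\hbar(\grg^\svee)\simeq S(\grt^*)^W$, i.e.\ the shift in the central character of a Verma module. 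This produces $\psi\colon\calM\to H^*_\TT(\grs^!\calF)$ sending the cyclic generator to $v_0$.

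To see that $\psi$ is an isomorphism: both sides are $\Lam$-graded with each graded piece free over $S(\grt^*)$, and their graded $S(\grt^*)$-ranks agree, since by~\cite[Theorem~1.12]{BFGM} the $\alp$-component $H^*(\grs^!\IC(Z^\alp))$ has Poincar\'e series $K_\alp(q)$, while the $\alp$-component of $\calM$ has the same series by the Poincar\'e--Birkhoff--Witt theorem. It therefore suffices to show $\psi$ is injective. Since $\calM$ is free, hence torsion-free, over $S(\grt^*)$, it is enough to check injectivity after base change to $\operatorname{Frac}(S(\grt^*))=\CC(\grt)$. Over $\CC(\grt)$ the parameter $\hbar$ is invertible, so $\psi\otimes\CC(\grt)$ is a homomorphism of $\grg^\svee\otimes_\CC\CC(\grt)$-modules whose source $\calM\otimes_{S(\grt^*)}\CC(\grt)$ is the Verma module with the tautological highest weight; this weight is generic (its pairings with the coroots of $\grg^\svee$ are non-constant rational functions, hence never positive integers), so the Verma module is irreducible, and as $\psi\otimes\CC(\grt)$ is nonzero (the generator goes to $1$) its kernel is zero. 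Thus $\psi$ is injective, and a comparison of finite $\CC$-dimensions in each bidegree (weight and cohomological degree) forces $\psi$ to be an isomorphism of graded $\calU$-modules.

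The step carrying the real content is the vanishing $\grn_+^\svee\cdot v_0=0$ together with the precise bookkeeping of the $-\rho$-twist; should the grading conventions not make the vanishing automatic, it can instead be obtained by a rank-one reduction via the factorization property (Lemma~\ref{tautolog}), using $Z^\beta\cong\AA^2$ for $\beta$ a simple coroot and a cohomological degree count. The input from the character computation~\cite[Theorem~1.12]{BFGM} is also essential; everything else is formal.
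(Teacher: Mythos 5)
Your proposal is correct and follows essentially the same route as the paper: identify the $\alp=0$ class as a highest-weight vector whose right $S(\grt^*)$-action is the left central action twisted by $-\rho$ via Harish--Chandra, use universality of the Verma module to produce the map, deduce injectivity from freeness plus irreducibility of the generic Verma module, and conclude by the character comparison with \cite[Theorem~1.12]{BFGM}. The only difference is cosmetic: you make explicit the vanishing $\grn_+^\svee\cdot v_0=0$ via the $\Lam$-grading of Theorem~\ref{main1}(2), which the paper absorbs into the phrase ``universality of the Verma module.''
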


Let us remark that Theorem \ref{universal Verma} implies part (1) of Theorem \ref{main1}, since the action of the algebra $\calU$ on $\calM$ is faithful.
\begin{proof}
   We consider the $S(\grt^*)$-submodule $\calH$ of $H^*_{\TT}(\grs^!\calF)$
  spanned by $1\in\IC(Z^0)$ (notation $\calH$ stands for the highest weight component).
  So $\calH$ is a right $S(\grt^*)$-module, and it carries a
  commuting left $S(\grt^*)$-action from the embedding $S(\grt^*)\subset\calU$.
  First we identify the $S(\grt^*)-S(\grt^*)$-bimodule $\calH$ with $S(\grt^*)(-\rho)$.

  The right action of $S(\grt^*)$ is nothing but the action of
  $H^*_{T(\calO)\rtimes\CC^\times}(\on{pt})$ arising from the $T(\calO)\rtimes\CC^\times$-equivariance
  of $\IC(_{c,\geq0}\overline\Bun_B)$. On the highest weight component this right action arises
  from the $T\times\CC^\times$-equivariance of the skyscraper sheaf
  $\IC(Z^0)$. The left action of $S(\grt^*)$ is the action
  of $H^*_{T(\calO)\rtimes\CC^\times}(\on{pt})$ arising from
  $T(\calO)\rtimes\CC^\times\subset G(\calO)\rtimes\CC^\times$-equivariance of the skyscraper sheaf
  $\IC^0\in D_{G(\calO)\rtimes\CC^\times}(\Gr_G)$ and the canonical isomorphism
  $\IC^0\star\IC(_{c,\geq0}\overline\Bun_B)\cong\IC(_{c,\geq0}\overline\Bun_B)$.
  Also, the left action of $\calZ_\hbar(\grg^\svee)=S(\grt^*)^W\subset S(\grt^*)$ is nothing
  but the action of $H^*_{G(\calO)\rtimes\CC^\times}(\on{pt})$ arising from the
  $G(\calO)\rtimes\CC^\times$-equivariance of $\IC^0$. We conclude that the left action
    of $\calZ_\hbar(\grg^\svee)=S(\grt^*)^W$ coincides with the restriction of the right action
    of $S(\grt^*)$ to the $W$-invariants. Since the left action of $\calZ_\hbar(\grg^\svee)$
    is the action of the Harish-Chandra center of $U_\hbar^{[]}(\grg^\svee)$, it follows that
    the right action of $S(\grt^*)$ is shifted by $-\rho$.

  From the universality of the Verma module $\calM$ we have a morphism
  $\Xi\colon\calM\to H^*_{\TT}(\grs^!\calF)$ of graded $\calU$-modules taking $1\in\calM$ to
  $1\in\IC(Z^0)$. The source is a free $S(\grt^*)$-module, and $\Xi$ is generically injective,
  since the Verma module with generic highest weight is irreducible. Hence $\Xi$ is injective.
  % \begin{NB}
  %   Why generically an isomorphism ? Is it because that Verma module
  %   with generic highest is irreducible ? If so, I think that we
  %   could conclude only generically injective at this stage. Since we will
  %   compare dimensions, we will soon see that it is an isomorphism, and
  %   hence generically an isomorphism.
  %   \begin{NB2}
  %     Done.
  %     \end{NB2}%
  % \end{NB}%
  The comparison of graded characters
  (see~\S\ref{the costalk}) implies that $\Xi$ is an isomorphism.
\end{proof}

Recall the setup of~\S\ref{general case}. Consider $x=(\mu,t)\in\grt$ such that
$\mu\in{\mathbf t},\ t\ne0$. We consider the hyperbolic restriction
$\calF_x=\Phi_x(\calF)$,
% \begin{NB}
%   $\Phi_x$ ?
%   \begin{NB2}
%     Done.
%     \end{NB2}%
% \end{NB}%
and its costalk $\grs^!\calF_x$. Then $H^*_{\TT}(\grs^!\calF_x)$
acquires a natural structure of a graded $\calU$-module, as well as its fiber
$\big(H^*_{\TT}(\grs^!\calF_x)\big)_x$ at $x\in\grt$.
% \begin{NB}
%   I do not see why we need $\otimes S(\grt^*)$. See l.\ref{tensor}.
%   \begin{NB2}
%     Done.
%     \end{NB2}%
% \end{NB}%

\begin{Corollary}
  \label{special fiber}
  The $U(\grg^\svee)$-module $\big(H^*_{\TT}(\grs^!\calF_x)\big)_x$ is isomorphic
  to the Verma module $M(t^{-1}\mu-\rho)$.
\end{Corollary}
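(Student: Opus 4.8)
The plan is to deduce the Corollary from Theorem~\ref{universal Verma} by the same hyperbolic-restriction comparison that underlies Lemma~\ref{standard-other}, and then to identify the specialization $\calM_x$ by an essentially formal computation. Concretely: first relate $\grs^!\calF_x$ to the costalk $\grs^!\calF$ of $\calF$ on $Z$ itself, which Theorem~\ref{universal Verma} identifies with $\calM$; then compute $\calM_x$.

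For the comparison step, note that $0\in Z$ is the unique $\TT$-fixed point and therefore lies in $Z^{\TT_x}$; I will write $\grs$ both for its embedding into $Z$ and for its embedding into $Z^{\TT_x}$. Let $j_x\colon\calA_x\hookrightarrow Z$ and $i_x\colon Z^{\TT_x}\hookrightarrow\calA_x$ be the embeddings of the attracting set for the chamber containing $x$, so that $\calF_x=\Phi_x(\calF)=i_x^*j_x^!\calF$ and the embedding of $0$ into $Z$ is $j_x\circ i_x$ precomposed with the embedding of $0$ into $Z^{\TT_x}$. Applying $\grs^!$ to the canonical natural transformation $i_x^!\Rightarrow i_x^*$ evaluated at $j_x^!\calF$ produces, exactly as in the proof of Lemma~\ref{standard-other}, a canonical morphism $\grs^!\calF\to\grs^!\calF_x$ in $D_\TT(\on{pt})$. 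Being canonical, it is compatible with the $\calU$-actions, so applying $H^*_\TT$ yields a morphism of graded $\calU$-modules $\Theta\colon H^*_\TT(\grs^!\calF)\to H^*_\TT(\grs^!\calF_x)$. Now, just as at the end of the proof of Lemma~\ref{standard-other}, the localization theorem of~\S\ref{localization} together with~\cite[(6.2)]{GKM} shows that $\Theta$ becomes an isomorphism after inverting an element of $S(\grt_x^*)$ that does not vanish at $x$ --- the relevant localization isomorphism holds away from the Lie algebras of the $\TT$-stabilizers of points of $Z\setminus Z^{\TT_x}$, and $x$ generates $\TT_x$. Hence $\Theta$ is an isomorphism in a Zariski neighbourhood of $x$, so its specialization at $x$ is an isomorphism, and combining with Theorem~\ref{universal Verma} we get $\bigl(H^*_\TT(\grs^!\calF_x)\bigr)_x\simeq\calM_x$ as $\calU_x$-modules.

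It then remains to check $\calM_x\simeq M(t^{-1}\mu-\rho)$. Writing $x=(\mu,t)$ with $t\neq0$, specializing the grading parameter $\hbar$ to $t$ turns $\calM'=U_\hbar(\grg^\svee)/\grn_+^\svee$ into $U_t(\grg^\svee)/\grn_+^\svee$, which, after rescaling the generators of $\grg^\svee$ by $t$ (legitimate since $t\neq0$), is the usual universal Verma module $U(\grg^\svee)/U(\grg^\svee)\grn_+^\svee$, free over $S(\bft^*)$ acting on the right. Under this identification the shifted right action on the factor $S(\grt^*)(-\rho)$, namely $a\mapsto a-\hbar\langle a,\rho\rangle$ for $a\in\bft^*$, specializes at $x$ and rescales to multiplication by $\langle a,\,t^{-1}\mu-\rho\rangle$, so that specializing the universal Verma module along this character gives $M(t^{-1}\mu-\rho)$; the compatibility of the left $\calZ_\hbar(\grg^\svee)$-action with the shifted right $S(\grt^*)$-action, established inside the proof of Theorem~\ref{universal Verma}, guarantees the correct central character.

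The only input of genuine substance is Theorem~\ref{universal Verma} itself, which is already available. Beyond that, the step needing a little care is the $\calU$-linearity of $\Theta$, together with the identification of $\TT$- and $\TT_x$-equivariant cohomology of a point upon specialization at $x$ (so that the source of $\Theta$, specialized, really is the module $\calM_x$ of Theorem~\ref{universal Verma} regarded as a $\calU_x$-module); both are handled exactly as in the proof of Lemma~\ref{standard-other}. I do not expect the algebraic step to cause any trouble.
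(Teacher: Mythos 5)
Your argument is correct and follows the same route as the paper: the paper deduces the statement in two steps, first invoking Theorem~\ref{universal Verma} to identify $\bigl(H^*_{\TT}(\grs^!\calF)\bigr)_x$ with $M(t^{-1}\mu-\rho)$, and then invoking Lemma~\ref{standard-other} to show that the natural morphism $H^*_{\TT}(\grs^!\calF)\to H^*_{\TT}(\grs^!\calF_x)$ becomes an isomorphism after specialization at $x$. You have effectively unpacked both citations: your construction of $\Theta$ via $i_x^!\Rightarrow i_x^*$ and the subsequent localization argument is precisely the content of Lemma~\ref{standard-other} applied at the point $0$, and your algebraic check that $\calM_x\simeq M(t^{-1}\mu-\rho)$ (including the rescaling by $t$) is what the paper absorbs implicitly into its appeal to Theorem~\ref{universal Verma}.

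Your careful flagging of the $\TT$- versus $\TT_x$-equivariance issue is a sound instinct: Lemma~\ref{standard-other} is stated with $H^*_{\TT_x}$ while the Corollary uses $H^*_{\TT}$, and the paper silently identifies the two upon specialization at $x$. You handle it the same way they implicitly do, so this is not a gap, only a point where your writeup is more explicit than the original.
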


\begin{proof}
  The $U(\grg^\svee)$-module $\big(H^*_{\TT}(\grs^!\calF)\big)_x$ is isomorphic
  to the Verma module $M(t^{-1}\mu-\rho)$ by~Theorem~\ref{universal Verma}.
  The natural morphism $H^*_{\TT}(\grs^!\calF)\to H^*_{\TT}(\grs^!\calF_x)$
  induces an isomorphism of fibers at $x\in\grt$ by~Lemma~\ref{standard-other}.
\end{proof}

\subsection{Proof of Theorem~\ref{main1}(5,6)}
By Lemma~\ref{standard-other} we have to study $(H^*_{\TT_x}(\phi,s_{x,w}^!\calF))_x$, where
$s_{x,w}$ is the embedding into $Z$ of a fixed point $\phi$ in the stratum $Z^{\TT_x}_w$.
The analysis of the dual $(H^*_{\TT_x}(\phi,s_{x,w}^*\calF))_x$ is entirely similar and will be
omitted.
By Lemma~\ref{phix} applied in the case of integral regular dominant $\lambda$
(resp.~Lemma~\ref{psix} in the case of integral singular dominant $\lambda$,
resp.~Lemma~\ref{r phix} in the case of rational regular dominant $\lambda$), $\phi\in\oZ^\alpha$,
where $\alpha=\lambda-w\lambda$. More precisely, $\oZ^\alpha$ is embedded into $Z$ by adding
defect at $0\in\AA^1$. Let us fix $\Lambda_+\ni\theta\geq\alpha$. Then the image $\phi_\theta$
of $\phi$ in $Z^\theta$ is $\phi_\alpha(\eta\cdot0)$, where $\phi_\alpha\in\oZ^\alpha$,
and $\Lambda_+\ni\eta:=\theta-\alpha$.

We will need the shifted universal Verma module $\calM(-\alpha)$ defined as follows.
We consider the following $S(\grt^*)-S(\grt^*)$-bimodule $S(\grt^*)(-\alpha)$:
as a left $S(\grt^*)$-module it coincides with the free rank one module $S(\grt^*)$, but
the right action of $a\in\bft^*$ is multiplication by $a-\hbar\langle a,\alpha\rangle$.
We set $\calM(-\alpha):=\calM\otimes_{S(\grt^*)}S(\grt^*)(-\alpha)$.
Furthermore, we set $\calM(-\alpha)_{\grt_x}:=\calM(-\alpha)\otimes_{S(\grt^*)}S(\grt_x^*)$:
a ``partially universal'' shifted Verma module.

Since the highest weight of the $\calU$-module $H^*_{\TT_x}(\phi,s_{x,w}^!\calF)$ is
$-\rho-\nobreak\alpha$, from the ``partial universality'' of $\calM(-\alpha)_{\grt_x}$
we obtain a morphism
$\Xi\colon \calM(-\alpha)_{\grt_x}\to H^*_{\TT_x}(\phi,s_{x,w}^!\calF)$.
%We claim that $\Xi$ is an isomorphism.
% \begin{NB}
%   It is a bit tense here, I propose to use \begin{Claim} $\Xi$ is an isomorphism. \end{Claim}
%   \begin{NB2}
%     Done.
%     \end{NB2}%
% \end{NB}%
\begin{Claim} $\Xi$ is an isomorphism. \end{Claim}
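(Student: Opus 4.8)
The plan is to deduce the Claim from Theorem~\ref{universal Verma} by using the factorizability of $\calF$ recorded in Lemma~\ref{tautolog} to ``split off'' the smooth factor $\oZ^\alpha$, so that the costalk of $\calF$ at $\phi$ becomes, as a $\calU$-module, a one-dimensional twist of the costalk at the $\TT$-fixed point $0\in Z$. Recall $\phi=\phi_\alpha\in\oZ^\alpha$, and that for $\Lambda_+\ni\theta\geq\alpha$, $\eta:=\theta-\alpha$, the image $\phi_\theta$ of $\phi$ in $Z^\theta$ is ``$\phi_\alpha$ with defect $\eta$ added at $0$''; hence $s_{x,w}^!\calF=\bigoplus_{\theta\geq\alpha}i_{\phi_\theta}^!\IC(Z^\theta)$. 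The point $\phi_\theta$ lies on the connected, locally closed stratum of $Z^\theta$ along which the map part is an honest based map of degree $\alpha$ and the defect is exactly $\eta\cdot 0$; this stratum is isomorphic to $\oZ^\alpha$, so by constructibility $i_{\phi_\theta}^!\IC(Z^\theta)$ may be computed at $\phi_\alpha'+\eta\cdot 0$ for any $\phi_\alpha'\in\oZ^\alpha$, and since $\pi_\alpha|_{\oZ^\alpha}$ is dominant we may choose $\phi_\alpha'$ whose Schubert divisor $\pi_\alpha(\phi_\alpha')$ avoids $0$. Then $\phi_\alpha'+\eta\cdot 0$ lies in the factorization locus $\oZ^\alpha\times Z^\eta|_{(\BG_m^\alpha\times\AA^\eta)_{\on{disj}}}$ of~\S\ref{standard}, and Lemma~\ref{tautolog}(a) gives $i_{\phi_\theta}^!\IC(Z^\theta)\cong i_{\phi_\alpha'}^!\IC(\oZ^\alpha)\otimes\grs^!\IC(Z^\eta)$, where $i_{\phi_\alpha'}^!\IC(\oZ^\alpha)\cong\CC$ is one-dimensional, placed in cohomological degree $\dim\oZ^\alpha$ and $T^\svee$-weight $-\alpha$ (as $\phi_\alpha'$ is a smooth point). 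Crucially, by Lemma~\ref{tautolog}(b) these splittings are compatible with the action of $\calU$ (which then acts through the $\grs^!\IC(Z^\eta)$-factor only), so summing over $\theta$ and taking $\TT_x$-equivariant cohomology yields a $\calU$-linear isomorphism
\[
H^*_{\TT_x}(\phi,s_{x,w}^!\calF)\;\cong\;\ell\otimes_{S(\grt_x^*)}H^*_{\TT_x}(\grs^!\calF),
\]
with $\ell:=H^*_{\TT_x}(i_{\phi_\alpha'}^!\IC(\oZ^\alpha))$ free of rank one over $S(\grt_x^*)$, concentrated in degree $\dim\oZ^\alpha$ and weight $-\alpha$.

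Now $H^*_{\TT}(\grs^!\calF)\cong\calM$ by Theorem~\ref{universal Verma}, so after restricting equivariance $H^*_{\TT_x}(\grs^!\calF)\cong\calM_{\grt_x}:=\calM\otimes_{S(\grt^*)}S(\grt_x^*)$, which by PBW is free over $S(\grt_x^*)$ with basis $U_\hbar(\grn_-^\svee)$ and is cyclic over $\calU$ (even over $U_\hbar(\grg^\svee)$) on $1$. Hence $H^*_{\TT_x}(\phi,s_{x,w}^!\calF)\cong\ell\otimes_{S(\grt_x^*)}\calM_{\grt_x}$ is a free $S(\grt_x^*)$-module of the same graded rank as $\calM(-\alpha)_{\grt_x}$ --- up to the overall cohomological shift $\dim\oZ^\alpha$, the $(-\alpha)$-twist being an automorphism of $S_\hbar(\bft^*)$ that preserves graded ranks --- and is $\calU$-cyclic on the fundamental class $1\in\IC(Z^\alpha)$ at $\phi$, which is exactly where the ``partial universality'' construction sends $1\in\calM(-\alpha)_{\grt_x}$. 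As $\calM(-\alpha)_{\grt_x}$ is $\calU$-cyclic on $1$, $\Xi$ is therefore surjective; being a degree-preserving surjection of free $S(\grt_x^*)$-modules with finite-dimensional graded pieces of equal dimension in each degree, it is an isomorphism. (The sharper statement that $\Xi$ identifies the bimodule structures --- i.e.\ that the twist really is $-\alpha$, as needed for the Verma identification in Theorem~\ref{main1}(5,6) --- is then the same Harish-Chandra computation as in the proof of Theorem~\ref{universal Verma}: the weight-$(-\alpha)$ line $\ell$ shifts the right $S(\grt^*)$-action by precisely $-\alpha$; alternatively one may compare graded characters using $\on{ch}H^*(\grs^!\IC(Z^\eta))=K_\eta(q)$ from~\S\ref{the costalk} and summing over $\eta\in\Lambda_+$.) The singular and rational cases are handled identically, with Lemma~\ref{phix} replaced by Lemma~\ref{psix} resp.\ Lemma~\ref{r phix}.

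The delicate point is the splitting in the first paragraph: the given fixed point $\phi_\theta$ lies over $\theta\cdot 0\in\AA^\theta$ with \emph{everything} --- including the pullback $\pi_\alpha(\phi_\alpha)$ of the Schubert divisor by the honest map part --- concentrated at $0\in\PP^1$, so $\phi_\theta$ itself is \emph{not} in the factorization open locus; one first has to slide along the connected stratum to a point where the honest map's divisor is disjoint from the added defect at $0$, and only then apply the factorization property together with Lemma~\ref{tautolog}(b) to see that the product decomposition is compatible with the $\calU$-action. Everything downstream is the bookkeeping already carried out for Theorem~\ref{universal Verma}.
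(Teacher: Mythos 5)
Your overall strategy is the paper's: split the costalk $s_{x,w}^!\IC(Z^\theta)$ via factorization into (costalk at a smooth point of $Z^\alpha$) $\otimes\,\grs^!\IC(Z^\eta)$, feed the second factor into Theorem~\ref{universal Verma}, and upgrade to an isomorphism using freeness over $S(\grt_x^*)$. You also correctly isolate the delicate point: the fixed point $\phi_\theta$ has its entire Schubert divisor, as well as the added defect, concentrated at $0\in\AA^1$, so it does not lie in the factorization locus. But your workaround --- sliding $\phi_\alpha$ within its stratum to a point $\phi_\alpha'$ whose divisor avoids $0$ --- creates a genuine gap at the equivariant level. Any $\TT_{\lam}$-fixed point of $\oZ^\alpha$ has its Schubert divisor supported at $0$ (the divisor must be $\CC^\times$-invariant), so $\phi_\alpha'$ is necessarily \emph{not} $\TT_x$-fixed. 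Consequently the local-constancy identification $i^!_{\phi_\theta}\IC(Z^\theta)\cong i^!_{\phi_\alpha'(\eta\cdot 0)}\IC(Z^\theta)$ is only an isomorphism of complexes of vector spaces, not of objects of $D_{\TT_x}(\on{pt})$, and the object $\ell=H^*_{\TT_x}(i^!_{\phi_\alpha'}\IC(\oZ^\alpha))$ you tensor with is not a rank-one free $S(\grt_x^*)$-module of weight $-\alpha$ in any natural sense. So the displayed $\calU$-linear, $S(\grt_x^*)$-linear isomorphism $H^*_{\TT_x}(\phi,s_{x,w}^!\calF)\cong\ell\otimes_{S(\grt_x^*)}H^*_{\TT_x}(\grs^!\calF)$ is not established, and with it neither the graded-rank count nor the cyclicity on which your surjectivity argument rests.

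The repair is to reverse the order of your two steps, which is exactly what the paper does. Since both $\calM(-\alpha)_{\grt_x}$ and $H^*_{\TT_x}(\phi,s_{x,w}^!\calF)$ are free $S(\grt_x^*)$-modules graded by $-\Lambda_+$ and finitely generated in each degree (freeness of the target follows from purity), it suffices to check that the fiber $\Xi_0$ at $0\in\grt_x$ is an isomorphism --- a purely non-equivariant statement. At that point your sliding argument becomes legitimate: the stratum is connected, the costalk along it is a (derived) local system on which the global Ext classes act, and Lemma~\ref{tautolog}(b) identifies the $\calU$-action at the slid point with $\on{Id}\boxtimes$ the action on $\grs^!\IC(Z^\eta)$. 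The paper's own trick for the same obstacle is different and avoids moving the point at all: replace $B$ by a general Borel $B'$ transversal to $B_-$, so that the new factorization morphism $\pi'$ satisfies that $\pi'_\alpha(\phi_\alpha)$ is disjoint from $0$, and the factorization isomorphism then applies in a neighbourhood of $\phi_\theta$ itself. Either trick works non-equivariantly; neither works equivariantly, which is why the reduction to $\Xi_0$ must come first.
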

Indeed, both LHS and RHS are free $S(\grt_x^*)$-modules graded by
$-\Lambda_+$, finitely generated in each degree. Hence it suffices to check that the fiber
$\Xi_0$ of $\Xi$ at $0\in\grt_x$ is an isomorphism. But this is a statement about
\emph{nonequivariant} cohomology.

Recall the factorization morphism $\pi_\theta\colon Z^\theta\to\AA^\theta$. It is defined as the
pullback of the Schubert divisor (the union of $B$-invariant divisors)
in the flag variety $\CB$. In particular, we have $\pi_\alpha(\phi_\alpha)=\alpha\cdot0$.
Now instead of $B$, we consider a general Borel subgroup $B'$ transversal to $B_-$.
It gives rise to a new factorization morphism $\pi'_\theta\colon Z^\theta\to\AA^\theta$
(defined as the pullback of the $B'$-Schubert divisor). For a general $B'$ the divisor
$\pi'_\alpha(\phi_\alpha)$ is disjoint from $0\in\AA^1$, and we fix such a $B'$.
Recall the factorization property of $\pi'$: a canonical isomorphism
\[(Z^\alpha\times Z^\eta)|_{(\AA^\alpha\times\AA^\eta)_{\on{disj}}}\cong
Z^\theta\times_{\AA^\theta}(\AA^\alpha\times\AA^\eta)_{\on{disj}}.\]
It follows that $s_{x,w}^!\IC(Z^\theta)\cong s_{\phi_\alpha}^!\IC(Z^\alpha)\otimes\grs^!\IC(Z^\eta)$,
where $s_{\phi_\alpha}$ denotes the embedding of the point $\phi_\alpha$ into $Z^\alpha$.
Now $\phi_\alpha\in\oZ^\alpha\subset Z^\alpha$ is a smooth point, so the first factor
$s_{\phi_\alpha}^!\IC(Z^\alpha)$ is trivial: $\BC$ in cohomological degree $2|\alpha|$.
The second factor $\grs^!\IC(Z^\eta)$ was studied %in~Corollary~\ref{special fiber}.
in~Theorem~\ref{universal Verma}.
So we obtain an isomorphism $M(-\alpha-\rho)\iso H^*(\phi,s_{x,w}^!\calF)$
that coincides with $\Xi_0$ by~Lemma~\ref{tautolog}. So the claim is proved.

\medskip

Having established that $\Xi$ is an isomorphism, we are ultimately interested in its fiber
at $x=(\mu,n)\in\grt_x$. Recall that $\alpha=\lambda-w\lambda$, and this is
the shift in the highest weight we have to apply. Also, in the present case $t=n$, and
$t^{-1}\mu=n^{-1}\mu=\lambda$.
We get $\lambda-\rho-(\lambda-w\lambda)=w\lambda-\rho$, and Theorem~\ref{main1}(5,6) is proved.
\hfill $\Box$

\section{Affine case} \label{affine case}
\subsection{More notation}
In this Section we would like to describe a conjectural extension of
the above results to affine Lie algebras.  We again would like to follow the formalism
of~\S\ref{abs-form}.
Let $G,\grg$ be as above and let $\grg_{\aff}$ denote the corresponding affine Kac-Moody Lie algebra.
By definition it is the canonical central extension of the algebra $\grg[t,t^{-1}]\rtimes \CC$
where $\CC$ acts on $\grg[t,t^{-1}]$ by derivative of the loop rotation.
Let also $\grg_{\aff}^{\vee}$ denote the Langlands dual affine Kac-Moody algebra (i.e.\ this
the Kac-Moody algebra whose root system is dual to that of $\grg_{\aff}$).

We let $\widehat{T}=T\times \BC^\times$. This torus naturally sits inside
$G[t,t^{-1}]\rtimes \CC^\times$ where $\BC^\times$ acts by loop rotations on the affine
objects.
% \begin{NB}
%   $\BC^\times$ ?
%   \begin{NB2}
%     Done.
%     \end{NB2}%
% \end{NB}%
Furthermore, $\widehat{I}:=I\sqcup i_0$ (the affine simple root);
$\widehat{W}=W\ltimes X_*(T)$ is the affine Weyl group. Surprisingly, the
affine flag varieties stay hatless. Instead, $\CB^w$ (resp. $\CB_w$) stands for
a Schubert cell in the ind-finite type affine flag variety $\calB$ (resp.\ Kashiwara
affine flag scheme $\underline\CB$). We introduce two new notations: $\underline\CB{}^w_y$
is the intersection of $\CB^w$ with the closure of $\CB_y$, and $\overline\CB^w_y$ is the
intersection of $\CB_y$ with the closure of $\CB^w$. In particular,
$\underline\CB^w_e=\CB^w$. In other words,
\begin{equation*}
    \underline\CB^w_y = \bigsqcup_{v\ge y} \CB^w\cap \CB_v,
    \qquad
    \overline\CB^w_y = \bigsqcup_{v\le w} \CB^v\cap \CB_y,
\end{equation*}
cf.~\S\ref{nota}.
% \begin{NB}
%   Later you will use both $\underline{\CB}^w_y$ and
%   $\overline{\CB}^w_y$. I guess you probably mean
%   \begin{equation*}
%     \underline\CB^w_y = \bigsqcup_{v\ge y} \CB^w\cap \CB_v,
%     \qquad
%     \overline\CB^w_y = \bigsqcup_{v\le w} \CB^v\cap \CB_y.
%   \end{equation*}
%   Right ? \linelabel{underline}
%   \begin{NB2}
%     Right, done.
%     \end{NB2}%
% \end{NB}%

\subsection{Affine zastava spaces} The affine zastava space is denoted
$Z_{\on{aff}}^\alpha$. This is an affine variety which contains the space of based maps from $\PP^1$
to $\underline\CB$ as a dense open subset. As before for every $\beta\geq \alpha$ we have a
closed embedding $Z_{\on{aff}}^\alpha\hookrightarrow Z_{\on{aff}}^\beta$ by adding defect at $0$.
More generally, for $K\subset I\subset\widehat{I}$ we denote by $Z_{\on{aff},K}^\alpha$
the corresponding parabolic affine zastava space (this is a partial compactification
of the space of based maps from $\PP^1$ to the corresponding thick partial affine flag variety
$^K\underline{\CB}$ defined by $K$).
% \begin{NB}
%   Isn't it better to write $^K\CB$ or $^K\underline{\CB}$ ?
%   \begin{NB2}
%     Done.
%     \end{NB2}%
% \end{NB}%
Here $\alpha\in\Lambda^{G_{\on{aff}},K}_+$. We will be mostly interested in the
case $K=I$ where $\Lambda^{G_{\on{aff}},I}_+=\BN$, and $Z_{\on{aff},I}^a=\CU^a$
is the Uhlenbeck space.

\subsection{Summary}\label{summary}
Let us summarize what we can do in the affine case. Let $Z_{\aff}=\underset{\to}\lim~ Z^{\alpha}_{\aff}$,
$\calF=\bigoplus\limits_{\alpha} \calF^{\alp}$ where $\calF^{\alp}$ denotes the IC-sheaf of $Z_{\aff}^\alp$.
The ind-scheme $Z_{\aff}$ is endowed with an action of the torus
$\TT_{\aff}=\CC^\times\times \widehat{T}$ and the perverse sheaf $\calF_{\aff}$ is naturally
$\TT_{\aff}$-equivariant. Then we would like to do the following:

\begin{enumerate}
\item
 Define a homomorphism $\iota_{\aff}\colon U_{\hbar}(\grg_{\aff}^{\vee})\to \Ext_{\TT_{\aff}}(\calF,\calF)$
satisfying the affine version of Theorem \ref{main1}(2).

\item Compute the (ind)scheme $(Z_{\aff})^{\TT_{\aff,\lam}}$ for a subtorus
$\TT_{\aff,\lam}\subset \TT_{\aff}$ as in~\S\ref{fixed}.

\item Prove an analog of Theorem \ref{main1}.

\end{enumerate}

\noindent
Unfortunately, the technique of~\S\ref{action} is not available in the affine case, since an affine analog of the derived geometric Satake equivalence is not known at present. We believe that instead it should be possible to define $\iota_{\aff}$ by describing explicitly the images of the Chevalley generators. So far, we haven't done it carefully and we hope to address this issue in a future publication.

Let us assume that $\iota_{\aff}$ has been defined. Then, we need to perform part 2 of the above problem --- in the way which is analogous to~\S\ref{fixed}. This is what the bulk of this Section is devoted to; we believe that after the analog of parts (3)--(6) of~Theorem~\ref{main1} will be straightforward along the lines of~\S\ref{standard}.

Let us briefly explain our results about fixed points (part 2 of the above program) and what it
has to do with the affine version of the Kazhdan-Lusztig conjecture. Recall that the Lie algebra
$\grg_{\aff}^\svee$ contains canonical one-dimensional center and its weights have level which can
be positive, negative, or critical (these notions are carefully defined below; as before to
simplify the discussion we shall only consider rational weights). Let us for the sake of the
current discussion assume that $\grg$ is simply laced, which implies that $\grg_{\aff}^\svee$
is isomorphic to $\grg_{\aff}$.

It is known after Kashiwara and Tanisaki~\cite{KTpos,KTneg} that geometrically category $\calO$
for $\grg_{\aff}$ on positive level is equivalent to certain category of $D$-modules on the
Kashiwara flag scheme $\underline{\calB}$ and on negative level --- on the ind-scheme $\calB$.
Thus geometrically multiplicities of Verma modules in simple modules (i.e.\ coefficients of the
expansion of classes of simple modules in the basis of Verma modules in the K-group of category
$\calO$) are given by dimensions of stalks of IC-sheaves of Schubert varieties in
$\underline{\calB}$ for representations of positive level and by dimensions of stalks of
IC-sheaves of Schubert varieties in $\calB$ for negative level. For critical level it is
believed that similar results hold for ``semi-infinite Schubert varieties" (whose IC stalks
can be defined using zastava spaces $Z^\alpha$ --- cf.~\cite{Bra-icm}); we do not know a reference
to such a result,
although it should be easy to deduce it from the results of Frenkel and Gaitsgory~\cite{FrGa}.

Recall that in the finite-dimensional case, we have reproved the Kazhdan-Lusztig conjecture in a
different formulation which differs from the standard one by Koszul duality: namely we expressed
the multiplicities of simple modules in Verma modules for the Lie algebra $\grg^\svee$ using
stalks of IC sheaves of Schubert varieties in the flag variety of $\grg$ (or its endoscopic subalgebras). Similar thing is
going to happen in the affine case: we show below that
when $\lambda$ has positive level, the variety of fixed points of $\TT_{\aff,\lambda}$ in
$Z_{\aff}^{\alp}$ has to do with Schubert varieties in $\calB$ and when $\lambda$ has negative
level the above variety has to do with Schubert varieties in $\underline{\calB}$. Thus assuming
parts 1) and 3) above we shall be able to show that multiplicities of simple modules in
Verma modules for representations $\grg_{\aff}^{\vee}$ of positive (resp. negative) level are given
by dimensions of IC-stalks of Schubert varieties in $\calB$ (resp. in $\underline{\calB}$).
The relevant Koszul duality is obtained by combination of the works of Soergel~\cite{Soergel}
and Bezrukavnikov-Yun~\cite{BY}. Similarly, assuming 1) and 3) our results on fixed points
will imply that the corresponding multiplicities on critical level are given by stalks of
IC-sheaves of semi-infinite Schubert varieties (which are given by Lusztig's periodic
polynomials~\cite{Lu3}).

Since part (1) above is only conjectural at the moment, in this paper we shall only perform the
fixed points analysis (part (2)) and we shall postpone the rest until further publication.
\begin{NB}
  You mean until the next life, right?
  \end{NB}%
The fixed points analysis is interesting in itself, since it shows how transversal slices in
various flag varieties (thin, thick or
% \begin{NB}
%   `or' ?
%   \begin{NB2}
%     Done.
%     \end{NB2}%
% \end{NB}%
semi-infinite) (i.e.\ intersections of Schubert varieties in such a flag variety
with the Schubert cells in the opposite flag variety)
arise in a uniform way as fixed points in the same affine zastava spaces
with respect to different tori $\TT_{\aff,\lam}$.
%One nice feature of this story is that (closed cousins of) various kinds of Schubert varieties
%(in thin, thick or semi-infinite) flag varieties appear here in a somewhat uniform way ---
%as spaces of fixed points in the same ind-scheme $\calZ_{\aff}$ with respect to different
%tori $\TT_{\aff,\lam}$.
% \begin{NB}
%   Somehow two almost the same sentences are repeated. Also in the
%   first part, you probably mean that transversal slices are
%   intersections of Schubert cell (resp.\ varieties) in thin and
%   opposite Schubert varieties (resp.\ cell) in thick. It is not
%   clearly explained.
%   \begin{NB2}
%     Done.
%     \end{NB2}%
% \end{NB}%

\subsection{$\scW$-algebras} Here is a variant of the above construction which potentially should reprove the version of the Kazhdan-Lusztig conjecture for $\scW$-algebras proved by Arakawa~\cite{arakawa}.

Let $\CU^a$ be the Uhlenbeck space.
Let $\TT = T\times\CC^\times\times\CC^\times$ as before. It acts on
$\CU^a$: $T$ acts by the change of framing, and
$\CC^\times\times\CC^\times$ acts through its natural action on the
base $\PP^2$ preserving the line at infinity. We consider equivariant
cohomology groups $H^*_{\mathbb T}(i^!\calF^a)$,
$H^*_{\mathbb T}(i^*\calF^a)$, where $i$ is the inclusion of the
unique $\TT$-fixed point of $\CU^a$, and $\calF^a = \IC(\CU^a)$.
Let $\AA = H^*_{\CC^\times\times\CC^\times}(\on{pt}) = \CC[\ve_1,\ve_2]$.
In \cite{2014arXiv1406.2381B} we constructed a representation of an
integral form of the $W$-algebra $\scW_{\AA}(\fg)$ on
$\bigoplus_a H^*_{\TT}(i^!\calF^a)$ and
$\bigoplus_a H^*_{\TT}(i^*\calF^a)$, under the assumption that $G$
is simply-laced.
Moreover the former is universal Verma module, and the latter is its
dual.
(In \cite{2014arXiv1406.2381B} we considered
${\mathbb G} = G\times\CC^\times\times\CC^\times$-equivarinat
cohomology groups, but they are just $W$-invariant parts, and the
difference is not essential.)
If these representations \emph{would} come from
$\Ext_{\TT}(\calF, \calF)$ with $\calF = \bigoplus \calF^a$, we could
apply the formalism of \S\ref{abs-form}. Hence multiplicities of
simple modules in Verma modules could be expressed by stalks of $\IC$
sheaves of Schubert varieties, and hence Kazhdan-Lusztig polynomials,
as in the case of $Z_{\aff}$ and $\grg_{\aff}^{\vee}$.
Thus this would reprove the multiplicity formula given in
\cite{arakawa}, again in the Koszul dual form.

However it is \emph{not} clear whether our construction factors
through $\Ext_{\TT}(\calF,\calF)$: $\scW_{\AA}(\fg)$ is understood as
the intersection
$$
\bigcap\limits_i \Vir_{i,\AA}\otimes_{\AA} \Heis_\AA(\alpha_i^\perp)
$$ of
tensor products of integral Virasoro and Heisenberg algebras (see
\cite[Th.~B.6.1]{2014arXiv1406.2381B}). Here $i$ runs over the index
set of simple roots of $\fg$. Individual Virasoro and Heisenberg
algebras are \emph{not} mapped to $\Ext_{\TT}(\calF,\calF)$. In fact,
they act only on $H^*_{\TT}$ of the hyperbolic restriction
$\Phi_{L_i,G}(\calF)$ (see \cite[\S4.4]{2014arXiv1406.2381B}).
Hence we need an additional care to apply our technique to this
situation. We shall postpone it until a future publication.

\begin{NB}
  If we look at the construction in \cite{2014arXiv1406.2381B} closer,
  we find that $\Vir_{i,\AA}\otimes_{\AA} \Heis_\AA(\alpha_i^\perp)$
  is mapped to
  $\Ext_{\TT}(\Phi_{L_i,G}(\calF),\Phi_{L_i,G}(\calF))$. We have a
  natural homomorphism
  \begin{equation}\label{eq:1}
    \Ext_{\TT}(\Phi_{L_i,G}(\calF),\Phi_{L_i,G}(\calF))
    \to \Ext_{\TT}(\Phi_{T,G}(\calF),\Phi_{T,G}(\calF)),
  \end{equation}
  which becomes an isomorphism over the fractional field of
  $H^*_{\TT}(\on{pt})$. (See \cite[\S3.2]{2014arXiv1406.2381B}.)
  Therefore $\scW_{\AA}(\fg)$ is mapped to the intersection of the
  images of \eqref{eq:1}. By the argument in \cite[\S3.6,
  Prop.~8.1.7]{2014arXiv1406.2381B}, we see that $\scW_{\AA}(\fg)$
  is mapped to the image of
  \begin{equation}\label{eq:2}
    \Ext_{\TT}(\calF,\calF) \to
    \Ext_{\TT}(\calF,\calF)\otimes_{H^*_{\TT}(\on{pt})}
    \on{Frac} H^*_{\TT}(\on{pt}),
  \end{equation}
  where $\on{Frac} H^*_{\TT}(\on{pt})$ is the fractional field of
  $H^*_{\TT}(\on{pt})$. If we would know that
  $\Ext_{\TT}(\calF,\calF)$ is torsion free, this is injective, hence
  we have a homomorphism
  $\scW_{\AA}(\fg) \to \Ext_{\TT}(\calF,\calF)$. However the torsion
  freeness of $\Ext_{\TT}(\calF,\calF)$ is \emph{not} known, hence we
  must be careful.

  For our application, we will study modules which are specialization
  of $H^*_{\TT}(i^!\calF)$ or $H^*_{\TT}(i^*\calF)$. The kernel of
  \eqref{eq:2}, even if it is nonzero, acts by $0$ on these modules,
  as $H^*_{\TT}(i^!\calF)$, $H^*_{\TT}(i^*\calF)$ are free over
  $H^*_{\TT}(\on{pt})$ (see
  \cite[Lemma~6.1.1]{2014arXiv1406.2381B}). Therefore we have analog
  of \thmref{main1} for the $W$-algebra.
\end{NB}%

\subsection{Twisted case} The above program has one drawback: it only deals with representations
of affine Lie algebras of the form $\grg_{\aff}^{\vee}$ --- i.e.\ affine Lie algebras which are
Langlands dual to untwisted ones.
In particular, it misses most untwisted affine Lie algebras which are associated with non-simply
laced simple finite-dimensional algebras.
\begin{NB}
  It also misses $A^{(2)}_{2n}$. As far as I understand,
  $A^{(2)}_{2n}$ is also discussed in \S\ref{twi}.
\end{NB}%
In~\S\ref{twi} we explain how to remedy this problem: namely, for any affine Lie algebra we explain
how to twist the affine zastava spaces $Z_{\aff}^{\alpha}$  so that this algebra conjecturally acts
(in the derived sense) on the corresponding direct sum of IC-sheaves (similarly to 1) above).
 We also explain how to compute the corresponding fixed points varieties (Lemma~\ref{untw}).

Let us now pass to the description of fixed points in various cases.

\subsection{Positive level}\label{posi} Let $\Lam_{\aff}=\Lam\oplus\ZZ$. It is naturally isomorphic to the cocharacter lattice of $\widehat{T}$.  Let now $\lam\in \Lam_{\aff}$. Consider the one-dimensional subtorus $\TT_{\aff,\lam}\subset \TT_{\aff}=\CC^\times\times\widehat{T}$
% \begin{NB}
%   $\times$ missing ?
%   \begin{NB2}
%     Done.
%     \end{NB2}%
% \end{NB}%
consisting of points of the form $(c,\lam(c))$, where $c\in \CC^\times$. The projection of
$\TT_{\aff,\lambda}\subset\BC^\times\times\widehat{T}=\BC^\times\times T\times\BC^\times$ to
$\BC^\times\times\BC^\times$ is a cocharacter given by a pair of integers $(a,b)$.
If $ab<0$ we say that the level is positive. The same argument as in~\S\ref{fixed}
establishes
\begin{Proposition} \label{aff pos}
Let $\lambda$ be a positive level rational coweight of the form $y\lambda'$
where $\lambda'$ is dominant (possibly singular). Let
$J_\zeta\subset\widehat{I}_\zeta$ be the corresponding subset, so that the
class of $y$ in $_\zeta\widehat{W}{}^{J_\zeta}$ is well defined. Then
\[
(Z_{\on{aff}}^\alpha)^{\TT_{\aff,\lambda}}\simeq
\bigcup\limits_{w\in {}_\zeta\widehat{W}{}^{J_\zeta}: \lambda'-w\lambda'\leq\alpha}\
_{\hphantom{J}\zeta}^{J_\zeta}\overline\CB{}^w_y.
\]
\end{Proposition}

% \begin{NB}
%   $\overline\CB{}^w_y$ is \emph{not} introduced yet. See
%   l.\ref{underline}.
%   \begin{NB2}
%     Done.
%     \end{NB2}%
% \end{NB}%

\subsection{Positive level parabolic zastava}\label{posi par}
Under the assumptions
of Proposition~\ref{aff pos}, given a subset $K\subset I\subset\widehat{I}$, the image
of the composed projection $_\zeta\CB\hookrightarrow\CB\twoheadrightarrow\ ^K\CB$ is a partial flag variety
$_{\hphantom{K}\zeta}^{K_\zeta}\CB$ for certain subset
$K_\zeta\subset \widehat{I}_\zeta$.
We denote by $\bar y$ the class of $y$ in
$^{K_\zeta}_{\hphantom{K}\zeta}\widehat{W}{}^{J_\zeta}= {}_\zeta \widehat{W}_{K_\zeta}\backslash\
_\zeta\widehat{W}/ {}_\zeta\widehat{W}_{J_\zeta}$. Combining all
the above arguments we arrive at

\begin{Proposition} \label{aff pos par}
$(Z_{\on{aff},K}^\alpha)^{\TT_{\aff,\lambda}}\simeq\bigcup\limits_{w\in {}_{\hphantom{K}\zeta}^{K_\zeta}\widehat{W}{}^{J_\zeta}: \lambda'-w\lambda'\leq\alpha}\
_{\hphantom{K}\zeta}^{K_\zeta}\overline\CB{}^w_{\bar y}$.
\end{Proposition}

% \begin{NB}
%   Same as above.
%   \begin{NB2}
%   Done.
%   \end{NB2}%
% \end{NB}%

\subsection{Negative level} \label{nega}
 The projection of
$\TT_{\aff,\lambda}\subset\BC^\times\times\widehat{T}=\BC^\times\times T\times\BC^\times$ to
$\BC^\times\times\BC^\times$ is a cocharacter given by a pair of integers $(a,b)$.
If $ab>0$ we say that the level is negative.

\begin{Proposition} \label{aff neg}
Let $\lambda$ be a negative level rational coweight of the form $y\lambda'$
where $\lambda'$ is antidominant (possibly singular). Let
$J_\zeta\subset\widehat{I}_\zeta$ be the corresponding subset, so that the
class of $y$ in $_\zeta\widehat{W}{}^{J_\zeta}$ is well defined. Then
\[
(Z_{\on{aff}}^\alpha)^{\TT_{\aff,\lambda}}\simeq
\bigcup_{w\in {}_\zeta\widehat{W}{}^{J_\zeta}: w\lambda'-\lambda'\leq\alpha}\
_{\hphantom{j}\zeta}^{J_\zeta}\underline\CB{}_w^y.
\]
\end{Proposition}

\begin{proof}
  Invert the coordinate $z$ on the source $\BP^1$.
  % \begin{NB}
  %   What do you mean by `parameter' ? Coordinate $z$ ?
  %   \begin{NB2}
  %     Done.
  %     \end{NB2}%
  % \end{NB}%
\end{proof}

\subsection{Negative level parabolic zastava}\label{nega par} Under the assumptions
of~Proposition~\ref{aff neg}, given a subset $K\subset I\subset\widehat{I}$, the image
of the composed projection $_\zeta\CB\hookrightarrow\CB\twoheadrightarrow\ ^K\CB$ is a partial flag variety
$_{\hphantom{K}\zeta}^{K_\zeta}\CB$ for certain subset
$K_\zeta\subset \widehat{I}_\zeta$.
We denote by $\bar y$ the class of $y$ in
$^{K_\zeta}_{\hphantom{K}\zeta}\widehat{W}{}^{J_\zeta}= {}_\zeta \widehat{W}_{K_\zeta}\backslash\
_\zeta\widehat{W}/ {}_\zeta\widehat{W}_{J_\zeta}$. Similarly
to~Proposition~\ref{aff pos par} and~Proposition~\ref{aff neg}, we have

\begin{Proposition} \label{aff neg par}
$(Z_{\on{aff},K}^\alpha)^{\TT_{\aff,\lambda}}\simeq\bigcup\limits_{w\in {}_{\hphantom{K}\zeta}^{K_\zeta}\widehat{W}{}^{J_\zeta}: w\lambda'-\lambda'\leq\alpha}\
_{\hphantom{k}\zeta}^{K_\zeta}\underline\CB{}_w^{\bar y}$.
\end{Proposition}

\subsection{Critical level} \label{crit}
We say that the level is critical if the projection
of $\TT_{\aff,\lambda}\subset\BC^\times\times\widehat{T}=\BC^\times\times T\times\BC^\times$ to
$\BC^\times\times\BC^\times$ lies entirely in the first factor $\BC^\times$. In other words,
$\TT_{\aff,\lambda}\subset\BC^\times\times T$ as in~\S\ref{fixed}. To warm up, we start with a
description of the fixed points in the Uhlenbeck space (parabolic affine zastava) $\CU^a$
(see e.g.~\cite{BFG}).
% \begin{NB}
%   Don't we  need an introduction of  the Uhlenbeck space, or  at least
%   the reference ?
%   \begin{NB2}
%     Done.
%     \end{NB2}%
% \end{NB}%
There is an involution $\tau$ acting on $\CU^a$ that preserves the action of $T$ but
interchanges the two actions of $\BC^\times$.
% \begin{NB}
%   Involution `on $\CU^a$'.
%   \begin{NB2}
%     Done.
%     \end{NB2}%
% \end{NB}%
So applying $\tau$ we may consider the
fixed points with respect to $T'_\lambda$ whose projection to $\BC^\times\times\BC^\times$ lies entirely in the second factor $\BC^\times$. If we view $\oU^a=\on{Bun}_{G}^a(\BA^1\times\BA^1)$
% \begin{NB}
%   $\oU^a$ is $\on{Bun}_{G}^a(\BA^1\times\BA^1)$, right ?
%   \begin{NB2}
%     Done.
%     \end{NB2}%
% \end{NB}%
as the moduli space of based maps from $\BP^1$ to the thick Grassmannian
(Kashiwara scheme) $\BG\!\on{r}_G$, then $(\oU^a)^{\TT'_\lambda}$ is
nothing but the moduli space of degree $a$ based maps from $\BP^1$ to
$\BG\!\on{r}_G^{\TT'_\lambda}=\on{Gr}_G^{\TT'_\lambda}$ (the fixed points in the usual
affine Grassmannian). As in~\S\ref{fixed}, $\lambda$ defines $\zeta\in T\subset G$,
and $G_\zeta$, and $I_\zeta$, and a dominant $_\zeta W$-conjugate $\lambda'$. Since $\lambda$ may be singular, it lies on a wall
of type $J_\zeta\subset I_\zeta$. Let $Z_{G_\zeta,J_\zeta}$ be the corresponding parabolic zastava space.

\begin{Lemma} \label{bun sec}
$(\oU^a)^{\TT'_\lambda}=
\bigsqcup\limits_{\alpha: (\alpha,\lambda')=a}\oZ_{G_\zeta, J_\zeta}^\alpha$.
\end{Lemma}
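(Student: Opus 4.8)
The plan is to reduce the statement to a fixed-point computation in the ordinary affine Grassmannian $\on{Gr}_G$ and then to run, with $\on{Gr}_G$ in place of the flag variety $\CB$, exactly the attractor/repeller analysis of \secref{reg dom}--\secref{rati}. As recalled above, once $\oU^a$ is realized as the moduli space of degree $a$ based maps $\BP^1\to\BG\!\on{r}_G$, taking $\TT'_\lambda$-fixed points amounts to restricting to those based maps that factor through $\BG\!\on{r}_G^{\TT'_\lambda}=\on{Gr}_G^{\TT'_\lambda}$ (all fixed-point schemes understood as reduced, as in \secref{fixed}). So it suffices to (i) describe the connected component $Y_0$ of $\on{Gr}_G^{\TT'_\lambda}$ through the base point $e\in\on{Gr}_G$, and then (ii) keep track of degrees.

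For (i): the subtorus $\TT'_\lambda\subset T\times\CC^\times_{\mathrm{rot}}$ (with $\CC^\times_{\mathrm{rot}}$ the loop rotation) acts on $\on{Gr}_G=G(\calK)/G(\calO)$ through the cocharacter $c\mapsto(\mu(c),c^n)$, where $\mu\in\Lam$ is the integral cocharacter with $\lambda=\mu/n$; set $\zeta=\mu(\sqrt[n]{1})$ and $G_\zeta$, the centralizer of $\zeta$, a connected reductive (Borel--Siebenthal) subgroup. A root-subgroup computation shows that a point $x_\alpha(at^j)t^\nu$ is $\TT'_\lambda$-fixed only if $\alpha$ is a root of $G_\zeta$ and $j=-\langle\alpha,\lambda\rangle$; equivalently, pulling back along the $n$-fold cover $t=s^n$ of the loop parameter and conjugating by $\mu$ turns the $\TT'_\lambda$-twisted loop group into the untwisted loop group of $G_\zeta$. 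Hence at every lattice point $t^\nu$ only finitely many root directions survive, those of a single parabolic of $G_\zeta$, so $\on{Gr}_G^{\TT'_\lambda}$ is a disjoint union of finite-dimensional partial flag varieties of $G_\zeta$. Writing $\lambda=y\lambda'$ with $\lambda'$ dominant and $y\in\ {}_\zeta\widehat{W}{}^{J_\zeta}$, one first conjugates by a representative $\dot y$ of $y$, as in the proof of \propref{reg zas}, to reduce to the dominant case; then the argument of the proof of \lemref{psix} --- the repellent of $e$ is the orbit of $\on{Rad}P_{J_\zeta}$, carried isomorphically by $\pi_{J_\zeta}$ onto the big cell --- applied verbatim inside $\on{Gr}_G$ identifies $Y_0$ with the partial flag variety ${}^{J_\zeta}_\zeta\CB=G_\zeta/P_{J_\zeta}$, where $J_\zeta\subset I_\zeta$ is the wall-type of $\lambda'$ as in \secref{rati}.

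For (ii) and the conclusion: since $\BP^1$ is connected and a based map sends $\infty$ to $e\in Y_0$, every based map $\BP^1\to\on{Gr}_G^{\TT'_\lambda}$ factors through $Y_0\cong{}^{J_\zeta}_\zeta\CB$. Thus $(\oU^a)^{\TT'_\lambda}$ is the moduli of based maps $\BP^1\to{}^{J_\zeta}_\zeta\CB$ whose degree, measured in $H_2(\on{Gr}_G,\ZZ)=\ZZ$ via the closed embedding $Y_0\hookrightarrow\on{Gr}_G$, equals $a$. That embedding induces on $H_2$ a homomorphism $H_2({}^{J_\zeta}_\zeta\CB,\ZZ)\to\ZZ$ which, tested on the lines through $e$ dual to the simple coroots $\alpha^\svee_j$ of $G_\zeta$ (each of which degenerates inside $\on{Gr}_G$ to a lattice point $t^{-\langle\alpha_j,\lambda\rangle\alpha^\svee_j}$), works out to $\alpha\mapsto(\alpha,\lambda')$ in the normalization of the statement. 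Sorting the based maps $\BP^1\to{}^{J_\zeta}_\zeta\CB$ by their actual degree $\alpha$ therefore gives $(\oU^a)^{\TT'_\lambda}\simeq\bigsqcup_{\alpha:\ (\alpha,\lambda')=a}\oZ_{G_\zeta,J_\zeta}^\alpha$, as asserted.

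The main obstacle is step (i). Unlike the finite-dimensional \lemref{phix}--\lemref{r phix}, $\on{Gr}_G$ is an infinite-dimensional ind-scheme, so one has to control the a priori infinite family of root directions, check that at each lattice point precisely the directions of one parabolic of $G_\zeta$ remain, and verify that the resulting fixed-point scheme is reduced and is the whole of $G_\zeta/P_{J_\zeta}$ rather than an open cell in it. Once this identification is secured, the degree computation in (ii) and the final bookkeeping are routine.
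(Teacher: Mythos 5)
Your proposal is correct and follows essentially the same route as the paper: the paper likewise identifies the $\TT'_\lambda$-fixed component of $\on{Gr}_G$ through the base point with the loop-rotation-fixed component through $t^{\lambda}$ in $\on{Gr}_{G_\zeta}$, i.e.\ with the partial flag variety $^{J_\zeta}_{\hphantom{j}\zeta}\CB$, and then matches degrees by noting that the determinant line bundle restricts to $\CO_{\iota(\lambda')}$ with $\iota$ the minimal even symmetric bilinear form. Your step (i) merely carries out by hand (via the $n$-fold cover of the loop parameter and conjugation by $\mu$) what the paper invokes as the known description of these fixed components, and your line-by-line degree computation amounts to the same pairing $(\alpha,\lambda')$.
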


\begin{proof}
The $\TT'_\lambda$-fixed point set component passing through the base point of
$\on{Gr}_G$ is isomorphic to the loop rotation fixed point set component passing
through the point $\lambda\in\on{Gr}_{G_\zeta}$. The latter is nothing but
the parabolic flag variety $^{J_\zeta}_{\hphantom{j}\zeta}\CB$. The degrees
match because the restriction of the determinant line bundle on $\on{Gr}_G$ to
the $\TT'_\lambda$-fixed point set component  $^{J_\zeta}_{\hphantom{j}\zeta}\CB$
passing through $\lambda$ is isomorphic to $\CO_{\iota(\lambda')}$ where
$\iota\colon X_*(T)\to X^*(T)$ is the minimal even symmetric bilinear form.
\end{proof}

\begin{Corollary} \label{uhl sec}
\[
(\CU^a)^{\TT_{\aff,\lambda}}\stackrel{\tau}{\simeq}(\CU^a)^{\TT'_\lambda}=
\bigcup\limits_{\alpha: (\alpha,\lambda')=a}Z_{G_\zeta, J_\zeta}^\alpha.
\]
\end{Corollary}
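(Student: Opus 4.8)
The plan is to combine Lemma~\ref{bun sec} with the involution $\tau$ and then globalize from the open locus $\oU^a$ to the whole Uhlenbeck space $\CU^a$, exactly as Corollary~\ref{zas} was deduced from Lemma~\ref{phix}. First I would dispose of the left-hand identification: since the level is critical, $\TT_{\aff,\lambda}$ lies in $\BC^\times\times T$, i.e.\ it involves the base rotation but not the loop rotation; the involution $\tau$ is defined on all of $\CU^a$, preserves the $T$-action and swaps the two $\BC^\times$-factors, so it carries $\TT_{\aff,\lambda}$ onto the subtorus $\TT'_\lambda\subset T\times\BC^\times$ of Lemma~\ref{bun sec}. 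Hence $\tau$ restricts to an isomorphism $(\CU^a)^{\TT_{\aff,\lambda}}\iso(\CU^a)^{\TT'_\lambda}$, and everything reduces to identifying $(\CU^a)^{\TT'_\lambda}$.

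For that I would use the defect stratification of $\CU^a=Z^a_{\aff,I}$, which realizes it as the defect-compactification of $\oU^a$: its strata are indexed by an instanton number $b\le a$ together with an Uhlenbeck defect of total mass $a-b$ supported on $\AA^1\subset\PP^1$. A $\TT'_\lambda$-fixed point of $\CU^a$ has free part a $\TT'_\lambda$-fixed point of the corresponding $\oU^b$, which by Lemma~\ref{bun sec} is a based map $\PP^1\to{}^{J_\zeta}_\zeta\CB$ of some degree $\beta$ with $(\beta,\lambda')=b$; its defect is $\TT'_\lambda$-invariant, and --- just as in the proof of Corollary~\ref{zas}, where the defect of a fixed quasimap is confined to the fixed locus of the source --- it becomes the $G_\zeta$-colored defect of a based \emph{quasimap} $\PP^1\to{}^{J_\zeta}_\zeta\CB$ of some degree $\alpha\ge\beta$, the bookkeeping of defect masses and colors giving $(\alpha,\lambda')=(\beta,\lambda')+(\alpha-\beta,\lambda')=a$ through the determinant-line-bundle computation already used in Lemma~\ref{bun sec}. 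Running over all strata identifies $(\CU^a)^{\TT'_\lambda}$ on $\CC$-points with the union, over all $\alpha$ with $(\alpha,\lambda')=a$, of the defect-compactifications $Z^\alpha_{G_\zeta,J_\zeta}=\bigsqcup_{\beta\le\alpha}\oZ^\beta_{G_\zeta,J_\zeta}\times(\AA^1)^{(\alpha-\beta)}$ glued along the defect-adding embeddings, that is, with $\bigcup_{\alpha:(\alpha,\lambda')=a}Z^\alpha_{G_\zeta,J_\zeta}$.

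To upgrade this to an isomorphism I would argue as throughout Section~\ref{fixed}: work with the reduced fixed-point scheme, note that $(\oU^a)^{\TT'_\lambda}$ is open in $(\CU^a)^{\TT'_\lambda}$, and promote the bijection on $\CC$-points to an isomorphism using the normality of the relevant Schubert-type varieties in the target together with reducedness --- first in the case where the right-hand side is irreducible, then assembling the general statement along the defect-adding maps.

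The genuinely new difficulty, compared with the finite-dimensional fixed-point computations of Section~\ref{fixed}, is to control the $\TT'_\lambda$-fixed locus inside the boundary $\CU^a\setminus\oU^a$. The hard part will be to show that the $\TT'_\lambda$-invariant part of the Uhlenbeck defect is precisely the $G_\zeta$-colored defect of a based quasimap $\PP^1\to{}^{J_\zeta}_\zeta\CB$, so that no spurious $\TT'_\lambda$-fixed component appears in the boundary (equivalently, that $(\oU^a)^{\TT'_\lambda}$ is dense in $(\CU^a)^{\TT'_\lambda}$), and to verify that the Uhlenbeck instanton-number bookkeeping translates, via the pairing $(\cdot,\lambda')$, into the coroot-degree $\alpha$. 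This is the step where the argument of Section~\ref{fixed} --- where the defect of a fixed quasimap is simply pinned to a torus-fixed point of the source --- must be adapted with the most care.
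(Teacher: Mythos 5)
Your proposal follows exactly the route the paper intends: transport $\TT_{\aff,\lambda}$ to $\TT'_\lambda$ by the involution $\tau$, apply Lemma~\ref{bun sec} on the open locus $\oU^a$, and pass to the Uhlenbeck/quasimap closures as in Corollary~\ref{zas}. The paper in fact states the corollary with no written proof (its $\tau$-free alternative in \S\ref{tau} again only treats $\oU^a$), so your explicit analysis of the boundary --- matching $\TT'_\lambda$-invariant Uhlenbeck $0$-cycles supported on the fixed line with $G_\zeta$-colored quasimap defects via the pairing $(\cdot,\lambda')$, which forces the gluing of the various $Z^\alpha_{G_\zeta,J_\zeta}$ with $(\alpha,\lambda')=a$ into a union rather than a disjoint union --- is precisely the substantive point, and you correctly identify it as the step requiring care.
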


\subsection{Uhlenbeck fixed points at the critical level}\label{tau}
 We offer another
explanation of the isomorphism $(\CU^a)^{\TT_{\aff,\lambda}}\simeq
\bigcup_{\alpha: (\alpha,\lambda')=a}Z_{G_\zeta, J_\zeta}^\alpha$
of~Corollary~\ref{uhl sec} without using the involution $\tau$.
To this end we view $\oU^a$ as the moduli space of based maps
from $\BP^1$ to the Beilinson-Drinfeld Grassmannian $\on{Gr}_{G,BD}$ (relative
Grassmannian over the curve $\BA^1\subset\BP^1$). Then
arguing as in the proof of~Lemma~\ref{phix} we see that
$(\oU^a)^{\TT_{\aff,\lambda}}$ is
naturally isomorphic to the locally closed subvariety of $\on{Gr}_{G,BD}$ formed
by all the collections $(\underline{z}\in\BA^{(a)},\ x\in\on{Gr}_{G,\underline{z}})$ such that
$x$ flows to the base point of $\on{Gr}_{G,\underline{z}}$ under the action of
$\lambda(\BC^\times)\subset T$ (for simplicity we
% \begin{NB}
%   we ?
%   \begin{NB2}
%     Done.
%     \end{NB2}%
% \end{NB}%
assume $\lambda$ integral here;
with obvious modifications for rational $\lambda$ like replacing $G$ with $G_\zeta$). For yet another (notational) simplification let us assume
$\lambda=\lambda'$ dominant. Then the above condition says $x\in S^0_J$
(parabolic semiinfinite orbit; more precisely, if $\underline{z}=(z_1,\ldots,z_d)$,
then $S^0_J$ stands for the product of the parabolic semiinfinite orbits
$S^0_{J,z_i}\subset\on{Gr}_{G,z_i}$ of the loop group $^JU_+(\CK)$).
The degree condition $\deg=a$ implies $x\in T^{-\alpha}_J$ (the product of
the opposite parabolic semiinfinite orbits of $^JU_-(\CK)\cdot L_J(\CO)$) for
$(\alpha,\lambda)=a$. Now the identification of $S_J^0\cap T_J^{-\alpha}$ with
$\oZ_{G_\zeta, J_\zeta}^\alpha$ is nothing but the
factorization of $\oZ_{G_\zeta, J_\zeta}^\alpha$.

\subsection{Zastava fixed points at the critical level}\label{critic}
We recall the setup of~\S\ref{crit}, but now we consider the fixed points
$(Z_{\on{aff}}^\alpha)^{\TT_{\aff,\lambda}}$. We denote by $_\zeta\CB_e(J_\zeta,y)\subset\
_\zeta\CB$ the orbit of the unipotent radical $^{J_\zeta}_{\hphantom{j}\zeta}U_+^y$
of the parabolic $_\zeta P_{J_\zeta}^y:=\dot y {}_\zeta P_{J_\zeta}\dot y{}^{-1}\subset
G_\zeta$ passing through the point $e\in {}_\zeta\CB$
(here $y\in {}_\zeta W^{J_\zeta}$). As we have seen
in~\S\ref{fixed}, conjugation by $\dot y{}^{-1}$ and projection to
$^{J_\zeta}_{\hphantom{j}\zeta}\CB$ establishes an isomorphism
$_\zeta\CB_e(J_\zeta,y)\iso\ ^{J_\zeta}_{\hphantom{j}\zeta}\CB_y$.
We denote by $_{\hphantom{j}\zeta}^{J_\zeta}\mathring{\CB Z}{}^{\alpha,w}_y$
% \begin{NB}
%   changed from
%   $_{\hphantom{j}\zeta}^{J_\zeta}\overset{\circ}{\CB Z}{}^{\alpha,w}_y$
% \end{NB}%
the
locally closed subset in the product $_\zeta\CB_e(J_\zeta,y)
\times\oZ^\alpha_{G_\zeta,J_\zeta}$ formed by the pairs
$(b,\phi)$ such that the relative position of $b$ and $\phi(0)$ is
$w\in {}_\zeta W^{J_\zeta}$. Also,
$\delta_\zeta\in\Lambda_+^{G_{\zeta,\on{aff}}}\subset\Lambda_+^{G_{\on{aff}}}$
stands for the minimal imaginary root of $G_{\zeta,\on{aff}}$.

\begin{Lemma} \label{crit zas}
\[
(\oZ_{\on{aff}}^\beta)^{\TT_{\aff,\lambda}}\simeq
\bigsqcup\limits_{w\in {}_\zeta W^{J_\zeta},\ \alpha\in\Lambda_+^{G_\zeta}:~ \lambda'-w\lambda'+
  (\alpha,\lambda')\delta_\zeta=\beta}\
_{\hphantom{j}\zeta}^{J_\zeta}\mathring{\CB Z}{}^{\alpha,w}_y.
% \begin{NB}
%   \text{changed from
%   $_{\hphantom{j}\zeta}^{J_\zeta}\overset{\circ}{\CB Z}{}^{\alpha,w}_y$}
% \end{NB}%
%\
%_{\hphantom{j}\zeta}^{J_\zeta}\overset{\circ}{\CB Z}{}^{\alpha,w}_y.
\]
\end{Lemma}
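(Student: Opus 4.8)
The plan is to follow the pattern of \S\ref{fixed}, and, for the ``affine direction'', of \S\ref{tau}. First I would carry out the standard reductions: passing to the $n$-fold cover $c\mapsto c^n$ of the source $\PP^1$, as in the proof of Lemma~\ref{r phix}, replaces $G$ by the endoscopic subgroup $G_\zeta$ and reduces us to the integral situation, while left translation of a $\TT_{\aff,\lambda}$-invariant quasimap by a representative $\dot y{}^{-1}$ of $y$ in the normalizer of $T$, as in the proof of Proposition~\ref{reg zas} (following~\cite{BFI}), reduces us to $y=e$ and $\lambda=\lambda'$ dominant. Thus it suffices to treat $\lambda$ integral dominant, singular of type $J\subset I$ (so that $G_\zeta=G$, $J_\zeta=J$ and $\delta_\zeta=\delta$), and to identify $(\oZ_{\on{aff}}^\beta)^{\TT_{\aff,\lambda}}$ with the disjoint union, over $w\in W^J$ and $\alpha\in\Lambda_+$ with $\lambda-w\lambda+(\alpha,\lambda)\delta=\beta$, of the loci of pairs $(b,\phi)$ in the product of the open Schubert cell of $G/P_J$ with $\oZ^\alpha_{G,J}$ such that $b$ and $\phi(0)$ are in relative position $w$. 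The general case follows by undoing the reductions, which turns the open cell of $G/P_J$ into $_\zeta\CB_e(J_\zeta,y)$ and $\oZ^\alpha_{G,J}$ into $\oZ^\alpha_{G_\zeta,J_\zeta}$.

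The heart of the matter is the defect-and-evaluation step. The defect of a $\TT_{\aff,\lambda}$-invariant based quasimap $\phi\colon\PP^1\to\underline\CB$, being $\CC^\times$-invariant and disjoint from $\infty$, is concentrated at $0\in\AA^1$; hence on the open locus $\oZ_{\on{aff}}^\beta$ of honest maps $\phi$ is recovered from $z:=\phi(1)$ by $c\mapsto\lambda(c^{-1})\cdot z$, so evaluation at $1$ is a locally closed embedding of $(\oZ_{\on{aff}}^\beta)^{\TT_{\aff,\lambda}}$ into $\underline\CB$ whose image remains to be described. Just as in the proofs of Lemma~\ref{phix} and Lemma~\ref{psix}, $z$ lies in the attractor of the base point under $\lambda(\CC^\times)$, and $\phi(0)=\lim_{c\to0}\lambda(c^{-1})\cdot z$ lies in the fixed-point scheme $\underline\CB{}^{\lambda(\CC^\times)}$. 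Since $\lambda$ is a \emph{finite} dominant coweight, its centralizer in the loop group is $L(L_J)$, so --- using the projection $\varpi\colon\underline\CB\to\Gr_G$ onto the (thick) affine Grassmannian of $G$, whose fibre is the finite flag variety $G/B$, exactly as in \S\ref{tau} --- I would split the attractor of the base point into its fibre part, the open Schubert cell of $G/P_J$ (cf.\ the proof of Lemma~\ref{psix}), and its base part, a parabolic semiinfinite orbit $S^0_J\subset\Gr_G$. The fibre coordinate $b$ of $z$ records, through its relative position against that of $\phi(0)$, an element $w\in W^J$, and by the degree formula of Lemma~\ref{phix} this datum contributes $\lambda-w\lambda$ to the finite part of $\beta$; on the base side the degree constraint forces $\varpi\circ\phi\in S^0_J\cap T^{-\alpha}_J\cong\oZ^\alpha_{G,J}$ (by factorization, the identification of \S\ref{tau}) with $(\alpha,\lambda)$ the coefficient of the imaginary root $\delta$ in $\beta$ (by the determinant-line-bundle computation in the proof of Lemma~\ref{bun sec}). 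Assembling $\phi\mapsto(b,\varpi\circ\phi)$ yields the claimed bijection on $\CC$-points; being a locally closed embedding between smooth varieties, it is an isomorphism.

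I expect the degree bookkeeping just invoked to be the main obstacle. One must show that the affine coroot $\beta$ decomposes cleanly as $\beta^{\mathrm{fin}}+m\delta$, with $m=(\alpha,\lambda)$ governed by the underlying $G$-bundle of $\varpi\circ\phi$ --- the determinant-line-bundle point of Lemma~\ref{bun sec}, now in the Beilinson--Drinfeld Grassmannian picture of \S\ref{tau} --- and $\beta^{\mathrm{fin}}=\lambda-w\lambda$ governed by the finite flag datum exactly as in the finite-dimensional Lemma~\ref{phix}; and, relatedly, that the attractor of the base point under the finite coweight $\lambda$ on the \emph{thick} affine flag variety $\underline\CB$ genuinely factors as this product of a finite Schubert cell and a parabolic semiinfinite orbit, in a way compatible with the defect stratification --- which is precisely why the statement is for the open locus $\oZ_{\on{aff}}^\beta$ and not for $Z_{\on{aff}}^\beta$. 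It also remains to check that, for fixed $\beta$, the pairs $(w,\alpha)$ satisfying the displayed identity are exactly those that occur, so that the disjoint union on the right-hand side is indexed correctly.
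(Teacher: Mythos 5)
Your strategy coincides in outline with the paper's: evaluate at $1\in\BP^1$, split the attractor of the base point of the thick affine flag variety via the projection to $\Gr_{G_\zeta}$ (the paper phrases this through the Beilinson--Drinfeld--Kottwitz flag variety over $\BA^1$ and its projection to $\Gr_{G_\zeta,BD}$, which is the same fibration after factorization), identify the base part with $\oZ^\alpha_{G_\zeta,J_\zeta}$ via $S^0_{J_\zeta}\cap T^{-\alpha}_{J_\zeta}$ as in \S\ref{tau}, and account for the degree as in Lemma~\ref{bun sec}. Your preliminary reductions to $\lambda$ integral dominant with $y=e$ via the $n$-fold cover and $\dot y^{-1}$-translation are legitimate shortcuts; the paper instead references Proposition~\ref{rat sing} directly for the statement about $p_1$.

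The genuine gap is at the crux, and it is precisely the step the paper spends its effort on. You assert that the fibre coordinate $b$ of $\phi(1)$ ``records, through its relative position against that of $\phi(0)$, an element $w\in W^J$'', but the target $\mathring{\CB Z}^{\alpha,w}_y$ is defined by the relative position of $b$ against $\psi(0)$, where $\psi\in\oZ^\alpha_{G_\zeta,J_\zeta}$ is the zastava map extracted from the \emph{base} coordinate via factorization and $\psi(0)\in\,^{J_\zeta}_{\hphantom{J}\zeta}\CB$ is not a $T$-fixed point --- this is a different flag from (the fibre coordinate of) the $T$-fixed affine flag $\phi(0)=\widehat w=-\alpha\cdot w\in\widehat W$. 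Passing between the two requires an argument, and the paper supplies one you do not reproduce: view $\psi$ as a generically transversal $P_{J_\zeta}$-structure on the trivial $G_\zeta$-bundle over $\BP^1_z$; together with the tautological $^{J_\zeta}U_+$-structure this furnishes a \emph{second} trivialization away from the non-transversality locus $\unl z$; extending the $P_{J_\zeta}$-structure from $\infty$ along this second trivialization and across by properness recovers $\psi$, and the relative position at $z=0$ of that $P_{J_\zeta}$-structure against the $B_+$-structure extended via the \emph{first} (trivial) trivialization is $w\bmod{}_\zeta W_{J_\zeta}$, given $\phi(0)=-\alpha\cdot w$. This double-trivialization comparison is the content of the proof. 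Your proposal asserts a relative position yields a $w$ without saying which two flags are being compared or why the answer matches $\phi(0)$, and you explicitly defer both the degree decomposition and the attractor factorization as ``obstacles'' rather than closing them; so while the outline is correct and matches the paper, the argument is not complete where it matters.
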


\begin{proof}
We view $\oZ_{\on{aff}}^\beta$ as the moduli space of based
maps from $\BP^1$ to the Beilinson-Drinfeld-Kottwitz flag variety
$\on{Fl}_{G,BDK}$ (relative over the curve $\BA^1\subset\BP^1$)
which projects to the Beilinson-Drinfeld Grassmannian $\on{Gr}_{G,BD}$.
Arguing as in~\S\ref{tau}, we obtain a projection $p_2$ from
$(\oZ_{\on{aff}}^\beta)^{\TT_{\aff,\lambda}}$ to
$\oZ^\alpha_{G_\zeta,J_\zeta}$.
To obtain a projection $p_1\colon (\oZ_{\on{aff}}^\beta)^{\TT_{\aff,\lambda}}\to\
_\zeta\CB_e(J_\zeta,y)$ note that for $\phi\in
(\oZ_{\on{aff}}^\beta)^{\TT_{\aff,\lambda}}$, the value $\phi(1)$
lies in the preimage of $S_0\subset\on{Gr}_{G_\zeta}$ under the natural
projection $\on{Fl}_G\to\on{Gr}_G$.
This preimage is $T$-equivariantly isomorphic to $\CB\times S_0$, and we define
$p_1(\phi)$ as the image of $\phi(1)$ under the projection to $\CB$.
The argument as in~Proposition~\ref{rat sing} proves that the image of $p_1$ lies in
$_\zeta\CB_e(J_\zeta,y)\subset\CB$.
It remains to identify the image under $(p_1,p_2)$ of the connected component of
$(\oZ_{\on{aff}}^\beta)^{\TT_{\aff,\lambda}}$ containing a $\TT_{\aff,\lambda}$-fixed
point $\phi$ such that $\phi(0)=\widehat{w}=
-\alpha\cdot w\in\Lambda\rtimes W=\widehat{W}$.

To this end note first that the latter condition guarantees that
$\deg(\phi)\in\Lambda_+\oplus\BN$ equals
$(\lambda'-w\lambda';(\alpha,\lambda'))$. Second, recall that
$p_2(\phi)\in\oZ^\alpha_{G_\zeta,J_\zeta}$ may be viewed as a
trivial $G_\zeta$-bundle on $\BP^1$ equipped with a $P_{J_\zeta}$-structure,
equal to $P_{J_\zeta}^{w_0}$ at $\infty\in\BP^1$. This $P_{J_\zeta}$-structure
is generically transversal to the (trivial) $^{J_\zeta}U_+$-structure,
equal to $^{J_\zeta}U_+$ at $\infty\in\BP^1$. These two structures define another
trivialization of the $G_\zeta$-bundle away from the points of nontransversality
$\unl{z}\subset\BA^1$, i.e.\ a point of $\on{Gr}_{G_\zeta,\unl{z}}$.
Conversely, the $P_{J_\zeta}$-structure extended from $\infty\in\BP^1$ to
$\BP^1\setminus\unl{z}$ via this
trivialization, and then to the whole of $\BP^1$ via the properness of
$^{J_\zeta}_{\hphantom{j}\zeta}\CB$ is nothing but the initial map
$\BP^1\to\ ^{J_\zeta}_{\hphantom{j}\zeta}\CB$. It follows that the relative position
of this $P_{J_\zeta}$-structure at $0\in\BP^1$, and the $B_+$-structure
(extended from $\infty\in\BP^1$ via the first (trivial) trivialization)
is nothing but the class $w\pmod{_\zeta W_{J_\zeta}}$ for
$\widehat{w}=-\alpha\cdot w$.
\end{proof}

% \begin{NB}
%   This seems, not an appendix, the usual section.
%   \begin{NB2}
%     Done.
%     \end{NB2}%
% \end{NB}%
\section{Twisted Uhlenbeck spaces}\label{twi}

\subsection{Root systems and foldings}\label{fold}
We recall the setup of~\cite[\S2.1]{brfi}.
Let $\fg^\svee$ be a simple Lie algebra
with the corresponding adjoint Lie group $G^\vee$. Let $T^\vee$ be
a Cartan torus of $G^\vee$. We choose a Borel subgroup $B^\vee\supset
T^\vee$. It defines the set of simple roots $\{\alpha_i,\ i\in I\}$.
Let $G\supset T$ be the Langlands dual groups.
%We define an isomorphism $\alpha\mapsto\alpha^*$ from the root lattice of $(G^\vee,T^\vee)$ to
%the root lattice of $(G,T)$ in the basis of simple roots as follows: $\alpha_i^*:=\alpha_i^\svee$
%(the corresponding simple coroot). For two elements $\alpha,\beta$ of the root lattice of
%$(G^\vee,T^\vee)$ we say $\beta\leq\alpha$ if $\alpha-\beta$ is a nonnegative linear combination of
%$\{\alpha_i,\ i\in I\}$. For such $\alpha$ we denote by $z^{\alpha^*}$ the corresponding character
%of $T$. As usually, $q$ stands for the identity character of $\BG_m$.
We set $d_i=\frac{(\alpha_i,\alpha_i)}{2}$.
%and $q_i=q^{d_i}$.

We realize $\fg^\svee$ as a \emph{folding} of a simple
simply laced Lie algebra
$\fg^{\svee\prime}$, i.e.\ as invariants of an outer automorphism $\sigma$ of
$\fg^{\svee\prime}$ preserving a Cartan subalgebra $\ft^{\svee\prime}\subset\fg^{\svee\prime}$
and acting on the root system of $(\fg^{\svee\prime},\ft^{\svee\prime})$.
In particular,
$\sigma$ gives rise to the same named automorphism of the Langlands dual
Lie algebras $\fg'\supset\ft'$.
\begin{NB}
  Langlands dual of $\fg^{\svee\prime}$ ? But $\fg^{\svee\prime}$ is
  simply-laced.
  \begin{NB2}
    Yes, you are right, but it seems more invariant to me this way.
    \end{NB2}%
\end{NB}%
We choose a $\sigma$-invariant Borel
subalgebra $\ft'\subset\fb'\subset\fg'$ such that $\fb=(\fb')^\sigma$.
The corresponding set of simple roots is denoted by $I'$.
We denote by $\Xi$ the finite cyclic group
generated by $\sigma$. Let $G'\supset T'$ denote the corresponding simply
connected Lie group and its Cartan torus.
The \emph{coinvariants} $X_*(T')_\sigma$ of $\sigma$ on the coroot
lattice $X_*(T')$ of $(\fg',\ft')$ coincide with the root lattice
of $\fg^\svee$. We have an injective map $a\colon X_*(T')_\sigma\to X_*(T')^\sigma$
from coinvariants to invariants defined as follows: given a coinvariant
$\alpha$ with a representative $\tilde\alpha\in X_*(T')$ we set
$a(\alpha):=\sum_{\xi\in\Xi}\xi(\tilde\alpha)$.

\subsection{Twisted Uhlenbeck} \label{twul}
We set $d=\on{ord}(\sigma)=\on{max}\{d_i\}$.
We fix a primitive root of unity $\zeta$ of order $d$.
We consider the affine plane $\BA^2=\BA^1\times\BA^1$ with coordinates $(z,t)$
(``horizontal'' and ``vertical''). We consider the moduli space $\on{Bun}_{G'}^b(\BA^1\times\BA^1)$ of
$G'$-bundles on $\BP^1\times\BP^1$, with second Chern class $b$, equipped with trivialization
at infinity $(\BP^1\times\BP^1)\setminus(\BA^1\times\BA^1)$. It has the Uhlenbeck closure
$\CU_{G'}^b\supset\on{Bun}_{G'}^b(\BA^1\times\BA^1)$~\cite{BFG}. The automorphism $\sigma$
acts on $\on{Bun}_{G'}^b(\BA^1\times\BA^1)$. Multiplication by $\zeta$
along the first (``horizontal'') copy of $\BA^1$ also acts there.

The \emph{twisted Uhlenbeck}
$\CU^a_\varsigma$ is defined as the closure in $\CU_{G'}^{da}$ of the fixed points of
the composition of the above automorphisms of
$\on{Bun}_{G'}^{da}(\BA^1\times\BA^1)$.

Alternatively, we set $\CK_t=\BC(\!(t^{-1})\!)\supset\CO_t=\BC[t]$, and
$\CK_z=\BC(\!(z^{-1})\!)\supset\CO_z=\BC[z]$.
The group ind-scheme
$G'(\CK_z)$ is equipped with an automorphism $\varsigma$ defined as the
composition of two automorphisms: a) $\sigma$ on $G'$; b) $z\mapsto\zeta z$.
We consider the twisted thick Grassmannian
$\BG\!\on{r}_{z,\varsigma}:=G'(\CK_z)^\varsigma/G'(\CO_z)^\varsigma$.
Then $\CU^a_\varsigma$ coincides with the Uhlenbeck closure of the space of degree $a$ based maps
from $\BP^1_t$ to $\BG\!\on{r}_{z,\varsigma}$, cf.~\cite{BFG}.

Finally, there is the third equivalent definition of $\CU^a_\varsigma$.
We consider the space of degree $da$ based \emph{twisted maps} from $\BP^1_z$ to
$\BG\!\on{r}_t:=G'(\CK_t)/G'(\CO_z)$: the fixed point set of the composition of two automorphisms
a) $z\mapsto\zeta z$ on the source $\BP^1_z$;
b) $\sigma$ on the target Grassmannian $\BG\!\on{r}_t$.
Its Uhlenbeck closure (cf.~\cite{BFG} and~\cite[\S2.4]{brfi}) coincides with $\CU^a_\varsigma$.

\subsection{Example}\label{exa}
It is known that $\CU^1_{G'}$ is isomorphic to $\BA^2\times\CN^{\on{min}}_{\fg'}$: the
product of the plane with the closure of the minimal nilpotent orbit.
The action of $\BC^\times\times\BC^\times$ is as follows:
$(q_1,q_2)\cdot(z,t,n)=(q_1z,q_2t,q_1q_2n)$. Hence the action of $\varsigma$
is as follows: $\varsigma(z,t,n)=(\zeta z,t,\zeta\sigma(n))$.
Say, let $G'=\on{SL}(2N),\ \fg^\svee={\mathfrak{sp}}(2N),\ \zeta=-1,\ d=2$.
% \begin{NB}
%   What is $G^\vee{}^{sc}$ ? Does it stands for simply-connected ? Why
%   do we need to take simply connected cover ? The nilpotent orbit
%   remains the same, right ?
%   \begin{NB2}
%     No need, you are right.
%     \end{NB2}%
% \end{NB}%
Then the only fixed point of $\varsigma=-\sigma$ on $\CN^{\on{min}}_{\fg'}$ is 0
(as opposed to $(\CN^{\on{min}}_{\fg'})^\sigma=\CN^{\on{min}}_{\fg^\svee}$).
This motivates somewhat the degree $da$ in the definition of $\CU^a_\varsigma$.

\subsection{Degree twisted vs.\ untwisted}\label{twdeg}
We have a natural embedding $\iota\colon \BG\!\on{r}_\varsigma\hookrightarrow\BG\!\on{r}_{G'}$.
It induces
$\iota_*\colon H_2(\BG\!\on{r}_\varsigma,\BZ)\hookrightarrow H_2(\BG\!\on{r}_{G'},\BZ)$, and
$\iota^*\colon \on{Pic}(\BG\!\on{r}_{G'})\hookrightarrow\on{Pic}(\BG\!\on{r}_\varsigma)$.
All the four groups in question are $\BZ$ with canonical generators.
Now $\iota^*$ is an isomorphism, while $\iota_*$ is multiplication by $d$.

\subsection{Twisted zastava}\label{twzas}
Similarly to~\S\ref{twul} we define the twisted zastava $Z^\alpha_{\aff,\varsigma}$
as the closure in $Z^{a(\alpha)}_{\aff,G'}$ of the fixed points of $\varsigma$ in
$\oZ^{a(\alpha)}_{\aff,G'}$. Here $a(\alpha)$ is defined
in~\S\ref{fold}, though we rather need its obvious affine version. As before we can organize them into one ind-scheme
$Z_\varsigma$ endowed with an action of the torus $\TT_{\aff}$.

\subsection{Affine Lie algebra action}
Let $\grg_{\aff,\varsigma}$ denote the affine Lie algebra corresponding to $\grg',\sig$ and $\zeta$. In other words we do the following.
First, we consider the Lie algebra $\grg'[z,z^{-1}]$ and as before we extend $\sig$ to an automorphism $\varsigma$ of this algebra by combining the action of $\sig$
on $\grg'$ and the action on $z$ by multiplication by $\zeta$. Then we consider the algebra of invariants $\grg'[z,z^{-1}]^{\varsigma}$; the sought-for algebra $\grg_{\aff,\varsigma}$ is obtained from it by adding the loop rotation and then considering its central extension. This is an affine Lie algebra and
we let $\grg_{\aff,\varsigma}^\svee$ be its Langlands dual algebra.

Then analogously to point (1) of \S\ref{summary}
\begin{Conjecture}
Let $\calF_{\aff,\varsigma}=\oplus_\alpha \calF^\alpha_{\aff,\varsigma}$ where $\calF^{\alpha}_{\aff,\varsigma}$ is the IC-sheaf of $Z^\alpha_{\aff,\varsigma}$ (viewed as a perverse sheaf on $Z_{\aff,\varsigma}$).
% \begin{NB}
%   `)' is missed.
%   \begin{NB2}
%     Done.
%     \end{NB2}%
% \end{NB}%
Then we have a natural injective homomorphism
$U_\hbar(\grg_{\aff,\varsigma}^\svee)\to \Ext _{\TT_{\aff}}(\calF_{\aff,\varsigma},\calF_{\aff,\varsigma})$.
\end{Conjecture}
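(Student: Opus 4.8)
The plan is to imitate the proof of Theorem~\ref{main1}, with one essential substitution: since the derived geometric Satake equivalence used in \S\ref{action} has no affine analog, the homomorphism $\iota_{\aff,\varsigma}$ would be built by writing down the images of the Chevalley generators of $U_\hbar(\grg_{\aff,\varsigma}^\svee)$ directly --- precisely the (not-yet-carried-out) strategy announced for the untwisted affine case in point~(1) of \S\ref{summary}. I would organize the argument in three steps. First, the Cartan part is automatic: $\TT_{\aff}$-equivariance yields a graded algebra map $S(\grt_{\aff}^*)=H^*_{\TT_{\aff}}(\on{pt})\to\Ext^*_{\TT_{\aff}}(\calF_{\aff,\varsigma},\calF_{\aff,\varsigma})$ with central image, and one checks it is compatible with the Harish--Chandra identification of $\calZ_\hbar(\grg_{\aff,\varsigma}^\svee)$ with the ring of Weyl-invariants in $S(\grt_{\aff}^*)$. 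Second, one produces the root generators $\iota(e_i),\iota(f_i)$, for $i$ a simple root of $\grg_{\aff,\varsigma}^\svee$, as classes in $\Ext^*_{\TT_{\aff}}(\IC(Z^\alpha_{\aff,\varsigma}),\IC(Z^{\alpha\mp\alpha_i^\svee}_{\aff,\varsigma}))$ of the correct $\widehat T$-weight and cohomological degree, coherent in $\alpha$. Third, one checks the defining relations of $U_\hbar(\grg_{\aff,\varsigma}^\svee)$ and injectivity.

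For the root generators I would use the geometry of \S\ref{twi}. By \S\ref{twzas}, $Z_{\aff,\varsigma}$ is (on its dense open part) the $\varsigma$-fixed locus of the untwisted simply-laced affine zastava $Z_{\aff,G'}$, and it inherits the factorization structure and the codimension-one boundary strata from there, as in the fixed-point analysis of \S\ref{critic}. For a finite-type node $i\in I$, restriction to the generic point of the relevant codimension-one stratum identifies the Ext group above with a \emph{finite}-type zastava Ext group for a Levi subdatum; there Theorem~\ref{main1}(2), transported through the finite folding correspondence of \S\ref{fold} and \cite{brfi}, already supplies a canonical generator, and one glues these over $\alpha$. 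For the affine node $i_0$ the class must instead come from the imaginary-root (``framing'') direction, i.e.\ from the twisted Uhlenbeck space $\CU^a_\varsigma$ of \S\ref{twul}: the relevant degeneration is the minimal one, for which the explicit description of $\CU^1_{G'}$ and of its $\varsigma$-fixed points in \S\ref{exa} is the local model. Note that, exactly as in ordinary geometric Satake, it is the \emph{Langlands dual} algebra $\grg_{\aff,\varsigma}^\svee$ that appears, which is \emph{not} the $\varsigma$-fixed subalgebra of $(\grg')_{\aff}$; so folding provides geometric input but not the algebra action itself.

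To pin down the relations and injectivity I would run the costalk-at-$0$ argument of \S\ref{standard} in the twisted setting. Let $\grs\colon\on{pt}\hookrightarrow Z_{\aff,\varsigma}$ be the unique $\TT_{\aff}$-fixed point. Then $H^*_{\TT_{\aff}}(\grs^!\calF_{\aff,\varsigma})$ is a graded module over the algebra generated by the classes of Steps~1 and~2, and the claim --- the exact analog of Theorem~\ref{universal Verma} --- is that it is the $\rho$-shifted universal Verma module for $\grg_{\aff,\varsigma}^\svee$, namely $U_\hbar(\grg_{\aff,\varsigma}^\svee)/\grn_+^\svee$ twisted by $S(\grt_{\aff}^*)(-\rho)$. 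Its graded character should be read off from the twisted analog of \cite[Theorem~1.12]{BFGM}; comparing it with the character of $U_\hbar(\grg_{\aff,\varsigma}^\svee)/\grn_+^\svee$ forces the tautological map from the universal Verma module to be an isomorphism. This simultaneously verifies the $[e_i,f_j]$ and (affine) Serre relations on this module and, since the universal Verma module is a faithful $U_\hbar(\grg_{\aff,\varsigma}^\svee)$-module, yields injectivity of $\iota_{\aff,\varsigma}$ --- just as in the remark after Theorem~\ref{universal Verma}.

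The hard part will be node $i_0$: there is no finite-dimensional shortcut for constructing $\iota(e_{i_0}),\iota(f_{i_0})$ or for checking the affine Serre relation in which $i_0$ participates, and this requires genuine control of $\Ext^*_{\TT_{\aff}}(\calF_{\aff,\varsigma},\calF_{\aff,\varsigma})$ in the imaginary direction --- presumably via a Beilinson--Drinfeld-type factorization of the affine zastava over the source $\PP^1$ together with the geometry of $\CU^a_\varsigma$. This is exactly the step that is still missing in the untwisted case of point~(1) of \S\ref{summary}; the twisting changes neither its difficulty nor its shape, so a proof of the untwisted statement by explicit generators should adapt to yield the present Conjecture.
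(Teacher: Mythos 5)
You should note first that the statement you are proving is stated in the paper as a \emph{Conjecture}, and the paper contains no proof of it: the authors say explicitly in the summary of Section~\ref{affine case} that the derived Satake technique of Section~\ref{action} is unavailable in the affine case, that they expect to define the homomorphism by writing down the images of the Chevalley generators, and that they have not carried this out. Your proposal follows exactly the route the authors sketch, and you honestly flag the main missing ingredient (the affine node $i_0$), so as a \emph{plan} it is consistent with the paper. But it is not a proof, and beyond the gap you acknowledge there is a second, logical gap in Step~3 that you should not paper over.

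Concretely: to obtain a homomorphism $U_\hbar(\grg_{\aff,\varsigma}^\svee)\to \Ext_{\TT_{\aff}}(\calF_{\aff,\varsigma},\calF_{\aff,\varsigma})$ you must verify the defining relations \emph{inside the target algebra} $\Ext_{\TT_{\aff}}(\calF_{\aff,\varsigma},\calF_{\aff,\varsigma})$. Your plan verifies them only in the image of that algebra acting on the costalk module $H^*_{\TT_{\aff}}(\grs^!\calF_{\aff,\varsigma})$; this suffices only if that action is faithful, which is a separate (unproven) geometric statement, not a consequence of the faithfulness of the universal Verma module over $U_\hbar(\grg_{\aff,\varsigma}^\svee)$. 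In the finite-type case the paper never faces this issue: the relations come for free because the derived Satake equivalence produces the homomorphism $\calU\to\Ext^*_{\TT}(\calF,\calF)$ wholesale, and Theorem~\ref{universal Verma} is used only afterwards, to prove injectivity. Moreover, the universality argument for the map $\Xi$ in Theorem~\ref{universal Verma} already \emph{presupposes} that the costalk carries a $\calU$-module structure, so the costalk computation cannot bootstrap the construction of the generators or the relations; and the character input you invoke (the twisted affine analog of \cite[Theorem~1.12]{BFGM}) is itself not established. So the proposal, like the paper, leaves the conjecture open; the honest conclusion is that Steps~2 and~3 each contain an unproved core, not just Step~2.
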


\subsection{Fixed points} Assuming the above conjecture, in order to compute geometrically the multiplicities of simple modules in Verma modules for the algebra $\grg_{\aff,\varsigma}^\svee$ we now need to analyze the fixed points with respect to various $\TT_{\aff,\lam}$ in the twisted zastava spaces $Z^{\alp}_{\aff,\varsigma}$. The fixed points (at positive, negative and critical levels) are described
as in~\S\ref{affine case} in terms of the twisted flag varieties $\CB_\varsigma,\unl{\CB}{}_\varsigma$
(which are nothing but certain Kac-Moody flag varieties). The relevant
affine Weyl groups sometimes coincide with some other affine Weyl groups
(``dual'') --  cf.~\cite{Lu2}, and here is a geometric explanation of this
coincidence.

Recall that $\lambda$ is a rational coweight in
\[(X_*(T)\oplus\BZ)\otimes\BQ=(X_*(T')_\sigma\oplus\BZ)\otimes\BQ.\] We write
it in the form $\lambda=(\bar\lambda,k)$. We write $\CB_\sigma$ (resp.\ $\unl{\CB}{}_\sigma$)
for the fixed points of $\sigma$ on the corresponding thin (resp.\ thick) affine flag varieties
of $G'$ (the untwisted flag varieties of the simply connected cover $G^\vee{}^{\on{sc}}$).
% \begin{NB}
%   Do you mean `untwisted thin/thick affine flag variety of
%   $G^\vee{}^{sc}$' ?
%   \begin{NB2}
%     Done.
%     \end{NB2}%
% \end{NB}%

\begin{Lemma}
  \label{untw}
Suppose the denominator of $k$ is divisible by $d$ (in particular, $k\ne0$,
that is, the level is not critical). Then the fixed point set
$(Z^\alpha_{\on{aff},\varsigma})^{\TT_{\aff,\lambda}}$ is described as
in~Proposition~\ref{aff pos par},~Proposition~\ref{aff neg par} but in terms of the untwisted
flag varieties $\CB_\sigma,\unl{\CB}{}_\sigma$.
\end{Lemma}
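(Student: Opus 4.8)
The plan is to deduce Lemma~\ref{untw} from the fixed point descriptions of~\S\ref{affine case} (Propositions~\ref{aff pos par} and~\ref{aff neg par}) together with the definition of $Z^{\alpha}_{\aff,\varsigma}$ as a space of $\varsigma$-fixed points. First I would set up the computation exactly as in~\S\ref{fixed}: using the third model of~\S\ref{twul}, a point of $(Z^{\alpha}_{\aff,\varsigma})^{\TT_{\aff,\lambda}}$ is a based quasi-map $\phi\colon\BP^1\to\CB_{G'}$ (resp.\ $\phi\colon\BP^1\to\underline{\CB}_{G'}$, according to the sign of the level) of degree $a(\alpha)$ which is fixed by $\varsigma=\sigma\cdot(z\mapsto\zeta z)$ and by the one-parameter subgroup $\TT_{\aff,\lambda}\subset\TT_{\aff}$. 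As in the proofs of Lemmas~\ref{phix},~\ref{psix},~\ref{r phix} and their affine analogues, such a $\phi$ has defect only at $0\in\BP^1$, is reconstructed from its value at a generic point by the $\lambda$-flow, and its value $\phi(0)$ is a point of the relevant affine flag variety fixed by the subgroup $H$ generated by $\TT_{\aff,\lambda}$ and $\varsigma$. So $(Z^{\alpha}_{\aff,\varsigma})^{\TT_{\aff,\lambda}}$ is a union --- indexed by the admissible values of $\phi(0)$ and by $\alpha$ --- of locally closed Schubert-type strata inside the fixed locus $Y:=(\CB_{G'})^{H}$ (resp.\ $(\underline{\CB}_{G'})^{H}$).

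The crux is the identification of $Y$. For arbitrary $\lambda$ one checks, just as in~\S\ref{affine case}, that $Y$ is (a component of) the \emph{twisted} affine flag variety $\CB_{\varsigma}$ (resp.\ $\underline{\CB}_{\varsigma}$), which is how the general statement is phrased. The new point is that the hypothesis that $d$ divides the denominator of $k$ forces the rescaling $z\mapsto\zeta z$ hidden in $\varsigma$ to be synchronized with the loop-rotation component of $\TT_{\aff,\lambda}$: combining the two fixed point relations $\phi(\zeta z)=\sigma(\phi(z))$ and $\phi(cz)=\lambda(c)\phi(z)$ at $c=\zeta$ shows that $\phi$ takes values in the points fixed by $\sigma$ composed with $\bar\lambda(\zeta)\in T$ and with the (fractional) loop rotation $\zeta^{k}$; since $\sigma$ acts trivially on the cocharacter lattice of $T$ (a module of $\sigma$-(co)invariants, see~\S\ref{fold}) the torus factor drops out after passing to the $\TT_{\aff,\lambda}$-fixed locus, and the divisibility hypothesis on $k$ is exactly what makes $\zeta^{k}$ act trivially along the loop direction there as well. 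Hence $Y=(\CB_{G'})^{\sigma}=\CB_{\sigma}$ (resp.\ $\underline{\CB}_{\sigma}$), the untwisted affine flag variety of $G^{\vee,\mathrm{sc}}$. Equivalently, under this hypothesis the twisted Kac--Moody flag variety $\CB_{\varsigma}$, with its $\TT_{\aff,\lambda}$-action and Schubert stratification, is canonically identified with $\CB_{\sigma}$, and this identification is precisely the geometric incarnation of the affine root system duality recorded in~\cite{Lu2}.

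With $Y$ identified, I would transport the combinatorics of Propositions~\ref{aff pos par} and~\ref{aff neg par} verbatim: the data $\zeta$, $\widehat{I}_{\zeta}$, $K_{\zeta}$, the wall $J_{\zeta}$ determined by the singularity of $\lambda'$, the class $\bar y$, and the condition $\lambda'-w\lambda'\le\alpha$ all make sense inside $\CB_{\sigma}$ (resp.\ $\underline{\CB}_{\sigma}$), and the fixed point set comes out as the corresponding union of strata --- of $\overline{\CB}$-type for positive level ($ab<0$), as in Proposition~\ref{aff pos par}, and of $\underline{\CB}$-type for negative level ($ab>0$, after inverting the coordinate on $\BP^1$), as in Proposition~\ref{aff neg par}. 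The degree bookkeeping --- that a degree-$\alpha$ twisted object sits in $Z^{a(\alpha)}_{\aff,G'}$ with $a(\alpha)=\sum_{\xi\in\Xi}\xi(\tilde\alpha)$ (the affine version of the map $a$ of~\S\ref{fold}), rescaled under $\CB_{\sigma}\hookrightarrow\CB_{G'}$ as in~\S\ref{twdeg} --- is routine and matches the index sets.

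I expect the main obstacle to be the identification of $Y$: making rigorous, with the correct normalizations, the assertion that $\varsigma$ acts through $\sigma$ on the $\TT_{\aff,\lambda}$-fixed locus exactly when $d$ divides the denominator of $k$. This requires being careful about which of the two circles in $\TT_{\aff}=\CC^{\times}\times\widehat{T}$ carries the twist, about the meaning of the fractional loop rotation $\zeta^{k}$ on the twisted affine flag variety, and about the (non-)injectivity of the $H_{2}$- and $\on{Pic}$-maps of~\S\ref{twdeg}; once $Y$ is in hand one still has to verify that the Schubert stratification and the parabolic subsets $K_{\zeta},J_{\zeta}$ and the class $\bar y$ transfer correctly under the affine root system identification. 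The complementary case, in which $d$ does not divide the denominator of $k$ (in particular the critical level $k=0$), is genuinely different and is handled by~\S\ref{critic} and the twisted-flag-variety description.
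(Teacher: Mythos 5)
Your overall strategy is the right one, and your ``plan'' paragraph and the final sanity checks (which flag variety, what the hypothesis on $k$ is for, contrast with the critical level) are in order. But the core step --- the identification of the ambient flag variety --- is not quite nailed down, and the paper does it in a simpler and more direct way than you propose.

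The paper's argument is short: recall that the comparison morphism from $(Z^\alpha_{\on{aff},\varsigma})^{\TT_{\aff,\lambda}}$ to a flag variety is $\phi\mapsto\phi(1)$ (not $\phi\mapsto\phi(0)$, which only records the indexing Weyl group element of the stratum); then observe that the hypothesis ``$d$ divides the denominator of $k$'' forces a $\TT_{\aff,\lambda}$-invariant $\phi$, viewed as a function of the source coordinate $z$, to actually be a function of $z^d$, so $\phi(\zeta)=\phi(1)$; finally, $\varsigma$-fixedness says $\phi(\zeta z)=\sigma(\phi(z))$, whence $\sigma(\phi(1))=\phi(\zeta)=\phi(1)$, i.e.\ $\phi(1)\in\CB_\sigma$ (resp.\ $\unl\CB{}_\sigma$). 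Everything else --- the strata, the subsets $J_\zeta$, $K_\zeta$, the class $\bar y$, positive vs.\ negative level --- is then imported verbatim from Propositions~\ref{aff pos par} and~\ref{aff neg par} applied inside $\CB_\sigma$ rather than $\CB_\varsigma$, exactly as you say.

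Your proposal instead introduces the subgroup $H$ generated by $\TT_{\aff,\lambda}$ and $\varsigma$, aims to show $(\CB_{G'})^H=\CB_\sigma$, and for this tries to combine the relations $\phi(\zeta z)=\sigma(\phi(z))$ and $\phi(cz)=\lambda(c)\phi(z)$ ``at $c=\zeta$.'' Two concrete problems here. First, for rational $\lambda$ the expression $\lambda(\zeta)$ is not a single element: $\TT_{\aff,\lambda}$ projects to the source $\CC^\times$ with degree $n$ (the denominator of $\lambda$), so the genuine equivariance is $\phi(c^nz)=\mu(c)\phi(z)$ with $\mu=n\lambda$, and the substitution $c\mapsto\zeta^{1/n}$ carries an $n$-th-root ambiguity whose resolution is precisely what the divisibility hypothesis must be used for --- you assert that ``the torus factor drops out'' and ``$\zeta^k$ acts trivially,'' but that is the step that needs to be carried out, not asserted. (Your parenthetical claim that $\sigma$ acts trivially on $X_*(T)$ is itself fine, since $X_*(T)=X_*(T')_\sigma$, but that alone does not dispose of the loop-rotation factor.) Second, you conflate two roles of the flag variety: what lands in $\CB_\sigma$ is the value $\phi(1)$ parametrizing the point of the stratum, while $\phi(0)$ only records the $w$ indexing the stratum. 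Saying the fixed point set is ``a union of strata inside the fixed locus $Y=(\CB_{G'})^H$'' mis-states the geometry. So while your high-level reasoning is sound and you correctly flag the identification of the ambient flag variety as the crux, the paper's short ``\,$\phi$ is a function of $z^d$\,'' observation is the missing lemma your argument needs to become a proof.
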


\begin{proof}
Recall from the proof of~Lemma~\ref{phix} that the map from the fixed point set to
the flag variety is defined as $\phi\mapsto\phi(1)$. The flag variety in
question is $\CB_{G'}$ (resp.\ $\unl{\CB}{}_{G'})$.
Unraveling the definition of the
denominator of $k$ in~\S\ref{rat} we see that if we write $\phi$ as a function of $z$,
then it is actually a function of $z^d$. In particular, $\phi(\sqrt[d]{1})=\phi(1)$.
Hence this common value lies in the $\sigma$-fixed point set of $\CB_{G'}$
(resp.\ of $\unl{\CB}{}_{G'}$), that is in $\CB_\sigma$ (resp.\ in $\unl{\CB}{}_\sigma$).
\end{proof}

% \begin{NB}
%   I do not yet see the point of the argument. What is
%   $\CB_\sigma\cap \CB_\varsigma$ ? It is different from
%   $\CB_\sigma$, I guess.
%   \begin{NB2}
%     Sorry, there was a confusion between the vertical and horizontal coordinates here;
%     I made changes in~\ref{twul} to correct it.
%     \end{NB2}%
% \end{NB}%

\bibliographystyle{myamsalpha}
%\bibliography{coulomb}
\bibliography{kl}

\newcommand{\etalchar}[1]{$^{#1}$}
\providecommand{\noopsort}[1]{}\def\cftil#1{\ifmmode\setbox7\hbox{$\accent"5E#1$}\else
  \setbox7\hbox{\accent"5E#1}\penalty 10000\relax\fi\raise 1\ht7
  \hbox{\lower1.15ex\hbox to 1\wd7{\hss\accent"7E\hss}}\penalty 10000
  \hskip-1\wd7\penalty 10000\box7}
\providecommand{\bysame}{\leavevmode\hbox to3em{\hrulefill}\thinspace}
\providecommand{\MR}{\relax\ifhmode\unskip\space\fi MR }
% \MRhref is called by the amsart/book/proc definition of \MR.
\providecommand{\MRhref}[2]{%
  \href{http://www.ams.org/mathscinet-getitem?mr=#1}{#2}
}
\providecommand{\href}[2]{#2}
\begin{thebibliography}{FKMM99}

\bibitem[ABB{\etalchar{+}}05]{abbgm}
S.~Arkhipov, R.~Bezrukavnikov, A.~Braverman, D.~Gaitsgory, and I.~Mirkovi{\'c},
  \emph{Modules over the small quantum group and semi-infinite flag manifold},
  Transform. Groups \textbf{10} (2005), no.~3-4, 279--362.

\bibitem[AG03]{AG}
S.~Arkhipov and D.~Gaitsgory, \emph{Another realization of the category of
  modules over the small quantum group}, Adv. Math. \textbf{173} (2003),
  114--143.

\bibitem[AG15]{ArGa}
D.~Arinkin and D.~Gaitsgory, \emph{Singular support of coherent sheaves and the
  geometric {L}anglands conjecture}, Selecta Math. (N.S.) \textbf{21} (2015),
  no.~1, 1--199.

\bibitem[Ara07]{arakawa}
T.~Arakawa, \emph{Representation theory of {$\mathcal W$}-algebras}, Invent.
  Math. \textbf{169} (2007), 219--320.

\bibitem[BB93]{jantzen}
A.~Beilinson and J.~Bernstein, \emph{A proof of {J}antzen conjectures}, Adv.
  Soviet Math. \textbf{16} (1993), 1--50.

\bibitem[BF08]{BF}
R.~Bezrukavnikov and M.~Finkelberg, \emph{Equivariant {S}atake category and
  {K}ostant-{W}hittaker reduction}, Moscow Math. Jour. \textbf{8} (2008),
  39--72.

\bibitem[BF10]{BFI}
A.~Braverman and M.~Finkelberg, \emph{Pursuing the double affine
  {G}rassmannian. {I}. {T}ransversal slices via instantons on
  {$A_k$}-singularities}, Duke Math. J. \textbf{152} (2010), no.~2, 175--206.

\bibitem[BF17]{brfi}
\bysame, \emph{{Twisted zastava and q-Whittaker functions}}, J. Lond. Math.
  Soc. (2) \textbf{96} (2017), no.~2, 309--325.

\bibitem[BFG06]{BFG}
A.~Braverman, M.~Finkelberg, and D.~Gaitsgory, \emph{Uhlenbeck spaces via
  affine {L}ie algebras}, The unity of mathematics, Progr. Math., vol. 244,
  Birkh\"auser Boston, Boston, MA, 2006, see
  \url{http://arxiv.org/abs/math/0301176} for erratum, pp.~17--135. \MR{2181803
  (2007f:14008)}

\bibitem[BFGM02]{BFGM}
A.~Braverman, M.~Finkelberg, D.~Gaitsgory, and I.~Mirkovi{\'c},
  \emph{Intersection cohomology of {D}rinfeld's compactifications}, Selecta
  Math. (N.S.) \textbf{8} (2002), no.~3, 381--418, see
  \url{http://arxiv.org/abs/math/0012129v3} or Selecta Math. (N.S.) {\bf 10}
  (2004), 429--430, for erratum.

\bibitem[BFN16]{2014arXiv1406.2381B}
A.~{Braverman}, M.~{Finkelberg}, and H.~{Nakajima}, \emph{{Instanton moduli
  spaces and $\mathscr W$-algebras}}, Ast\'erisque (2016), no.~385, vii+128.
  \MR{3592485}

\bibitem[BG02]{BG}
A.~Braverman and D.~Gaitsgory, \emph{Geometric {E}isenstein series}, Invent.
  Math. \textbf{150} (2002), 287--384.

\bibitem[BGS96]{BGS}
A.~Beilinson, V.~Ginzburg, and W.~Soergel, \emph{Koszul duality patterns in
  representation theory}, J. Amer. Math. Soc. \textbf{9} (1996), no.~2,
  473--527.

\bibitem[Bra03]{Braden}
T.~Braden, \emph{Hyperbolic localization of intersection cohomology},
  Transform. Groups \textbf{8} (2003), no.~3, 209--216.

\bibitem[Bra06]{Bra-icm}
A.~Braverman, \emph{Spaces of quasi-maps into the flag varieties and their
  applications}, International {C}ongress of {M}athematicians. {V}ol. {II},
  Eur. Math. Soc., Z\"urich, 2006, pp.~1145--1170. \MR{2275639 (2008i:14019)}

\bibitem[BV08]{BV}
A.~Beilinson and V.~Vologodsky, \emph{A {D}{G} guide to {V}oevodsky's motives},
  Geom. Funct. Anal. \textbf{17} (2008), no.~6, 1709--1787.

\bibitem[BY13]{BY}
R.~Bezrukavnikov and Z.~Yun, \emph{On {K}oszul duality for {K}ac-{M}oody
  groups}, Represent. Theory \textbf{17} (2013), 1--98.

\bibitem[CG97]{CG}
N.~Chriss and V.~Ginzburg, \emph{Representation theory and complex geometry},
  Birkh\"auser Boston Inc., Boston, MA, 1997. \MR{MR1433132 (98i:22021)}

\bibitem[DG14]{DrGa}
V.~Drinfeld and D.~Gaitsgory, \emph{On a theorem of {B}raden}, Transform.
  Groups \textbf{19} (2014), no.~2, 313--358.

\bibitem[FG09]{FrGa}
E.~Frenkel and D.~Gaitsgory, \emph{{$D$}-modules on the affine flag variety and
  representations of affine {K}ac-{M}oody algebras}, Represent. Theory
  \textbf{13} (2009), 470--608.

\bibitem[FKMM99]{fkmm}
M.~Finkelberg, A.~Kuznetsov, N.~Markarian, and I.~Mirkovi{\'c}, \emph{A note on
  the symplectic structure on the space of {$G$}-monopoles}, Comm. Math. Phys.
  \textbf{201} (1999), no.~2, 411--421, see
  \url{http://arxiv.org/abs/math/9803124v6} or Comm. Math. Phys. {\bf 334}
  (2015), no. 2, 1153--1155, for erratum.

\bibitem[{Gai}05]{Ga05}
D.~{Gaitsgory}, \emph{{The notion of category over an algebraic stack}}, ArXiv
  e-prints (2005), \href{http://arxiv.org/abs/math/0507192}{{\ttfamily
  arXiv:math/0507192 [math.AG]}}.

\bibitem[Gai15]{1-aff}
D.~Gaitsgory, \emph{Sheaves of categories and the notion of 1-affineness},
  Contemp. Math. \textbf{643} (2015), 127--225.

\bibitem[GKM98]{GKM}
M.~Goresky, R.~Kottwitz, and R.~MacPherson, \emph{Equivariant cohomology,
  {K}oszul duality, and the localization theorem}, Invent. Math. \textbf{131}
  (1998), no.~1, 25--83.

\bibitem[GV93]{GinzburgVasserot}
V.~Ginzburg and {\'E}.~Vasserot, \emph{Langlands reciprocity for affine quantum
  groups of type {$A_n$}}, Internat. Math. Res. Notices (1993), no.~3, 67--85.
  \MR{1208827 (94j:17011)}

\bibitem[KT96]{KTneg}
M.~Kashiwara and T.~Tanisaki, \emph{Kazhan-{L}usztig conjecture for affine
  {L}ie algebras with negative level. {II}. {N}onintegral case}, Duke Math. J.
  \textbf{84} (1996), no.~3, 771--813.

\bibitem[KT98]{KTpos}
\bysame, \emph{Kazhan-{L}usztig conjecture for symmetrizable {K}ac-{M}oody
  algebras. {III}. {P}ositive rational case}, Asian J. Math. \textbf{2} (1998),
  no.~4, 779--832.

\bibitem[Lus81]{Lu3}
G.~Lusztig, \emph{Hecke algebras and {J}antzen's generic decomposition
  patterns}, Adv. in Math. \textbf{37} (1981), no.~2, 121--164.

\bibitem[Lus84]{reductive}
\bysame, \emph{Characters of reductive groups over a finite field}, Annals of
  Mathematical Studies, 107, Princeton University Press, 1984.

\bibitem[Lus94]{Lu2}
\bysame, \emph{Monodromic systems on affine flag manifolds}, Proc. Roy. Soc.
  London Ser. A \textbf{445} (1994), no.~1923, 231--246, see Proc. Roy. Soc.
  London Ser. A {\bf 450} (1995), no.~1940, 731--732, for erratum.

\bibitem[Lus95]{Lu-cus2}
\bysame, \emph{Cuspidal local systems and graded {H}ecke algebras. {II}},
  Representations of groups ({B}anff, {AB}, 1994), CMS Conf. Proc., vol.~16,
  Amer. Math. Soc., Providence, RI, 1995, pp.~217--275. \MR{1357201
  (96m:22038)}

\bibitem[Nak01]{Na-qaff}
H.~Nakajima, \emph{Quiver varieties and finite-dimensional representations of
  quantum affine algebras}, J. Amer. Math. Soc. \textbf{14} (2001), no.~1,
  145--238 (electronic). \MR{MR1808477 (2002i:17023)}

\bibitem[Nak12]{handsaw}
\bysame, \emph{Handsaw quiver varieties and finite {$W$}-algebras}, Mosc. Math.
  J. \textbf{12} (2012), no.~3, 633--666, 669--670. \MR{3024827}

\bibitem[Nak13]{tensor2}
\bysame, \emph{Quiver varieties and tensor products, {I}{I}}, Symmetries,
  Integrable Systems and Representations, Springer Proceedings in Mathematics
  \& Statistics, vol.~40, 2013, pp.~403--428.

\bibitem[Soe98]{Soergel}
W.~Soergel, \emph{Character formulas for tilting modules over {K}ac-{M}oody
  algebras}, Represent. Theory \textbf{2} (1998), 432--448.

\bibitem[Vas98]{Vasserot}
E.~Vasserot, \emph{Affine quantum groups and equivariant {$K$}-theory},
  Transform. Groups \textbf{3} (1998), no.~3, 269--299. \MR{1640675
  (99j:19007)}

\end{thebibliography}
\end{document}